\definecolor{immi}{rgb}{0,.6,.1}
\newbox\removebox
\newcommand\remove[2]{%
\setbox\removebox=\ifmmode\hbox{$#2$}\else\hbox{#2}\fi%
\leavevmode
\rlap{\textcolor{#1}{\vrule height0.8ex depth-0.6ex width\wd\removebox}}%
\box\removebox
}
\long\def\bigremove#1{%
\par\setbox\removebox=\vbox{#1}%
\vbox{%
\vbox to0pt{\hbox{\tikz\draw[color=blue,thick] (0,0) -- (\wd\removebox,-\ht\removebox)  (\wd\removebox,0) -- (0,-\ht\removebox);}}
\box\removebox
}
}
\def\RFss@@#1{\RF^*_{\!*#1}}
\def\RFss@_#1{\RFss@@{,#1}}
\def\RFss{\@ifnextchar_{\RFss@}{\RFss@@{}}}
\newcommand{\RF}{{\rm RF}}
\def\Supp{{\operatorname{Supp}}}
\def\lct{\operatorname{lct}}
\def\moi{\operatorname{moi}}
\def\Tr{\operatorname{Tr}}
\def\Div{\operatorname{Div}}
\def\Sing{\operatorname{Sing}}
\def\deg{\operatorname{deg}}
\def\ac{{\overline{\rm ac}}}
\def\11{{\mathbf 1}}
\def\AA{{\mathbb A}}
\def\CC{{\mathbb C}}
\def\FF{{\mathbb F}}
\def\NN{{\mathbb N}}
\def\PP{{\mathbb P}}
\def\QQ{{\mathbb Q}}
\def\RR{{\mathbb R}}
\def\TT{{\mathbb T}}
\def\ZZ{{\mathbb Z}}
\def\cI{{\mathcal I}}
\def\cL{{\mathcal L}}
\def\cM{{\mathcal M}}
\def\cN{{\mathcal N}}
\def\cO{{\mathcal O}}
\def\cT{{\mathcal T}}
\def\cZ{{\mathcal Z}}
\def\Coeff{\operatorname{Coeff}}
\newcommand{\grad}{\operatorname{grad}}
\newcommand{\tr}{\operatorname{Tr}}
\newcommand{\spec}{\operatorname{Spec}}
\newtheorem{thm}[subsection]{Theorem}
\newtheorem{lem}[subsection]
{Lemma}
\newtheorem{cor}[subsection]
{Corollary}
\newtheorem{prop}[subsection]
{Proposition}
\newtheorem{conj}
{Conjecture}
{Problem}
\theoremstyle{plain}
\newtheorem*{namedthm}{\namedthmname}
\newcounter{namedthm}
\newenvironment{named}[1]
  {\def\namedthmname{#1}%
   \refstepcounter{namedthm}%
   \namedthm\def\@currentlabel{#1}}
  {\endnamedthm}
\theoremstyle{definition}
\newtheorem{defn}[subsection]
{Definition}
\theoremstyle{remark}
{Remark}
\newtheorem{rem}[subsection]
{Remark}
{Remarks}
\theoremstyle{plain}
\DeclareMathOperator*{\Spec}{Spec}
\newcommand{\ord}{\operatorname{ord}}
\def\cI{\mathcal{I}}
\def\cL{\mathcal{L}}
\def\cO{\mathcal{O}}
\def\cT{\mathcal{T}}
\renewcommand{\phi}{\varphi}
\renewcommand{\epsilon}{\varepsilon}
\renewcommand{\theta}{\vartheta}
\renewcommand{\and}{ \quad \text{and} \quad }
\begin{document}

\setcounter{tocdepth}{1} 

\author[Kien~H.~Nguyen]
{Kien Huu Nguyen}
\address{KU Leuven, Department of Mathematics,
Celestijnenlaan 200B, B-3001 Leu\-ven, Bel\-gium
}
\email{kien.nguyenhuu@kuleuven.be}
\urladdr{https://sites.google.com/site/nguyenkienmath/home}

\thanks{The author K. H. Nguyen is partially supported by Fund for Scientific Research-Flanders (Belgium) (F.W.O.) 12X3519N and KU Leuven grant C14/17/083. K. H. Nguyen is also funded by Vingroup Joint Stock Company and supported by Vingroup Innovation Foundation (VinIF) under the project code VINIF.2021.DA00030.}  
%
%
\subjclass[2010]{Primary 11L07; Secondary 11S40,  14E18, 03C98, 11F23, 11D72, 11D85, 11P55}
\keywords{Bounds for exponential sums, Igusa's conjecture on exponential sums, Deligne polynomials, exponential sums over finite fields, the strong monodromy conjecture, log canonical threshold, Igusa local zeta functions, motivic integration, log resolutions, Hardy-Littlewood circle method, estimation of the major arcs}

\begin{abstract}Let $f$ be a polynomial of degree $d>1$ in $n$ variables over $\ZZ$. Let $f_d$ be the homogeneous part of degree $d$ of $f$ and $s$ be the dimension of the critical locus of $f_d$. In this paper, we prove Igusa's conjecture for exponential sums with the exponent $(n-s)/(2(d-1))$. This implies a weak solution for a recent conjecture raised by Cluckers and the author (2020) about an analogue of the results of Deligne (1974) and Katz (1999) for exponential sums over finite fields in the finite ring setting. Moreover, this also improves the result of Cluckers, Musta\c{t}\u{a} and the author (2019) in case $n-s>2(d-1)$. In particular, this result improves the conditions $n-s>2^d(d-1)$ of Birch (1962) and $n-s>3(d-1)2^{d-2}$ of Browning-Prendiville (2017) on the validity of the estimation for the major arcs to $(n-s)>4(d-1)$. Therefore this result may have further applications on subjects related to the Hardy-Littlewood circle method such as the Hasse principle or distribution of rational points in algebraic varieties.  On the other hand, we also improve the recent work of Cluckers, Koll\'ar and  Musta\c{t}\u{a} (2019) on the strong monodromy conjecture  in the range $(-\lct((f)+J_f^2),0]$ in case of bad reduction and bad Schwartz-Bruhat function. Namely, in the range $(-\lct((f)+J_f^2),0]$, the real part of any pole of the Igusa local zeta functions associated with $f$ and any Schwartz-Bruhat function over any $p$-adic field is a root of the Bernstein-Sato polynomial of $f$.
\end{abstract}

\title[Exponential sums modulo $p^m$ for {D}eligne polynomials]{On a uniform bound for exponential sums modulo $p^m$ for {D}eligne polynomials} 


\maketitle

\section{Introduction}\label{sec:intro}


Let $f(x)$ be a polynomial in $n$ variables over $\ZZ$ and of degree $d>1$. Let $s=s(f)$ be the dimension of the critical locus of $f_d:\CC^n\to \CC$, where $f_d$ is the degree $d$ homogeneous part of $f$. When $f_d$ is smooth, i.e, $s=0$, we say that $f$ is a Deligne polynomial. This name follows by the famous theorem of Deligne in \cite{DeligneWI} for exponential sums over finite fields. Let us recall this theorem. 
\begin{thm}\label{Deligne}
Let $f\in\ZZ[x_1,...,x_n]$ be a Deligne polynomial. Let $p$ be a large enough prime and  $k$ be a finite field of characteristic $p$. Then  $$||k|^{-n}\sum_{x\in k^n}\psi(\tr_{k/\FF_p}(f(x)))|\leq (d-1)^n|k|^{-n/2},$$
where $\tr_{k/\FF_p}: k\to \FF_p$ is the trace function and $\psi:\FF_p\to\CC^{\times}$ is any non-trivial character of $\FF_p$.
\end{thm}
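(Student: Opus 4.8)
The plan is to interpret the character sum via $\ell$-adic cohomology and the Grothendieck--Lefschetz trace formula. Fix a prime $\ell\neq p$, set $\psi'=\psi\circ\tr_{k/\FF_p}$, and let $\cL_{\psi'}$ be the Artin--Schreier sheaf on $\AA^1_k$ attached to $\psi'$: a lisse $\overline{\QQ_\ell}$-sheaf of rank one, pointwise pure of weight zero and unramified on all of $\AA^1_k$. Put $\cF=f^{*}\cL_{\psi'}$, a lisse rank-one sheaf on $\AA^n_k$ whose Frobenius trace at a point $x\in k^n$ is $\psi(\tr_{k/\FF_p}(f(x)))$. The trace formula then reads
\[
\sum_{x\in k^n}\psi\bigl(\tr_{k/\FF_p}(f(x))\bigr)\;=\;\sum_{i=0}^{2n}(-1)^i\,\tr\bigl(\mathrm{Frob}_k\mid H^i_c(\AA^n_{\overline{k}},\cF)\bigr),
\]
so it suffices to establish: (a) $H^i_c(\AA^n_{\overline{k}},\cF)=0$ for $i\neq n$; (b) $\dim H^n_c(\AA^n_{\overline{k}},\cF)=(d-1)^n$; and (c) every eigenvalue of $\mathrm{Frob}_k$ on $H^n_c(\AA^n_{\overline{k}},\cF)$ has complex absolute value at most $|k|^{n/2}$. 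Granting these, the right-hand side equals $(-1)^n\tr(\mathrm{Frob}_k\mid H^n_c)$, of absolute value $\leq(d-1)^n|k|^{n/2}$, and division by $|k|^n$ gives the asserted bound, uniformly over all finite $k$ of characteristic $p$.

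Claim (c) follows at once from Deligne's Weil~II estimate: $\cF$ is pointwise pure of weight $0$ on a variety of dimension $n$, hence $H^n_c$ has weights $\leq n$. For (b), observe that once (a) is known, $\dim H^n_c=(-1)^n\chi_c(\AA^n_{\overline{k}},\cF)$, so it remains to compute this Euler characteristic. Degree-$d$ polynomials with $f_d$ smooth --- equivalently, whose homogenisation $F$ cuts out a smooth section of the hyperplane at infinity $H_\infty\subset\PP^n$ --- form a connected family (the complement of a discriminant in a projective space, times an affine space of lower-order terms), and on this family $\chi_c(\AA^n_{\overline{k}},f^{*}\cL_{\psi'})$ is constant: smoothness of $f_d$ makes the ramification of $f^{*}\cL_{\psi'}$ along $H_\infty$ equisingular, so no Swan conductors jump and the Euler characteristic cannot vary. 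One then evaluates it on the diagonal member $x_1^d+\cdots+x_n^d$, itself a Deligne polynomial because $p\nmid d$: Künneth together with the one-variable Grothendieck--Ogg--Shafarevich computation $\chi_c(\AA^1_{\overline{k}},\cL_{\psi'}(x^d))=1-d$ (the Swan conductor at $\infty$ equals $d$) yields $\chi_c=(1-d)^n=(-1)^n(d-1)^n$. A direct higher-dimensional Grothendieck--Ogg--Shafarevich, or characteristic-cycle, computation on $\PP^n$ would give the same answer.

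The concentration (a) is the technical heart, and the main obstacle. One half is formal: $\cF$ being lisse on the smooth affine $\AA^n$ of dimension $n$, Poincar\'e duality and Artin's vanishing theorem give $H^i_c(\AA^n_{\overline{k}},\cF)=0$ for $i<n$. The vanishing for $i>n$ is equivalent, by duality, to $H^j(\AA^n_{\overline{k}},\cF^{\vee})=0$ for $j<n$, a genuine affine-Lefschetz-type statement that is \emph{false} without the Deligne hypothesis (it fails, e.g., for $f=x_1^2$ in $n\geq 2$ variables, where the compactly supported cohomology sits in degree $2n-1$). The hypothesis enters precisely here: smoothness of $\{f_d=0\}$ lets one control the nearby/vanishing cycles of $\cF$ along $H_\infty$ inside $\PP^n$, forcing the boundary terms in the relevant excision long exact sequences to vanish in degrees $<n$. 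Deligne's own argument instead runs by induction on $n$, slicing by a generic hyperplane --- where Bertini keeps the restriction a Deligne polynomial of the same degree --- and analysing the pencil of such slices, each step multiplying the rank of the top cohomology by $d-1$; equivalently, one realises $H^n_c(\AA^n_{\overline{k}},\cF)$ as the $\psi$-isotypic part of the cohomology of the Artin--Schreier cover $\{t^p-t=f(x)\}$ of $\AA^n$, which the smoothness of $f_d$ permits one to compactify with enough control to invoke the purity theorem of Weil~I. In every version the decisive and delicate point is the bookkeeping of the wild ramification of $f^{*}\cL_{\psi'}$ along the divisor at infinity --- this is exactly what the hypothesis ``$p$ large'' is there to tame --- and once it is under control, (a)--(c), hence the stated estimate, follow.
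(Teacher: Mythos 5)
The paper does not prove this theorem at all: it is explicitly a quotation of Deligne's exponential-sum estimate from Weil~I, and the only ``proof'' given (in the remark immediately following the statement) is the trivial observation that if $f \in \ZZ[x_1,\ldots,x_n]$ is a Deligne polynomial then for $p$ large (so that $p \nmid d$ and the discriminant of $f_d$ is nonzero modulo $p$) the reduction $f \bmod p$ is a Deligne polynomial over $\FF_p$, to which Deligne's theorem applies directly. You have instead reconstructed a proof of Deligne's theorem itself: the reduction to (a) concentration of $H^i_c(\AA^n_{\overline{k}},f^*\cL_{\psi'})$ in degree $n$, (b) the dimension count $(d-1)^n$, and (c) the weight bound, and the three are indeed what the proof requires. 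This is a genuinely different route from the paper only in the sense that you chose to open the black box, which the paper does not. The outline is essentially correct, with two caveats worth flagging. First, several of your steps are asserted rather than argued: the equisingularity/no-Swan-jump claim used to deform to $x_1^d+\cdots+x_n^d$ would need to be justified (it is true, but requires control of the total Swan conductor along $H_\infty$ as $f$ varies, which is nontrivial), and you yourself label the concentration step (a) as ``the technical heart'' without carrying it out -- so the proposal is a roadmap, not a proof. Second, the hypothesis ``$p$ large'' in the theorem as stated in the paper is there only to guarantee that $f \bmod p$ is still a Deligne polynomial of degree $d$; once that holds, Deligne's theorem gives the bound for that $p$ with no further largeness needed, so attributing the ``$p$ large'' hypothesis to taming wild ramification is slightly off -- that taming is provided by the smoothness of $\{f_d=0\}$, which the reduction step already guarantees. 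You could shorten your argument considerably by simply citing Deligne and supplying only the reduction-mod-$p$ observation, as the paper does.
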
 
\begin{rem}
In the original statement, Deligne worked with a fixed prime $p$ and a Deligne polynomial $f\in\FF_p[x_1,...,x_n]$ of degree $d$, i.e., $(d,p)=1$ and the critical locus of $f_d:\overline{\FF}_p^n\to\overline{\FF}_p$ has dimension $0$. Theorem \ref{Deligne} follows by the fact that if $f\in\ZZ[x_1,...,x_n]$ is a Deligne polynomial then $f\mod p$ is a Deligne polynomial if $p$ is large enough.
\end{rem}
In \cite{Katz}, Katz extended Deligne's theorem to more general setting where we allows $x$ running in the set of $k$-points of any $k$-scheme $Z$ with good enough geometric condition and $f$ is a regular function on $Z$. Katz's result depends on the complexity of $Z$ and the singularities of $f$ at infinity. Where the complexity of $Z$ depends on a given embedding $Z\hookrightarrow \PP^N$, the number $R$ and the degrees $D_1,...,D_R$ of equations defining $Z$ in $\PP^N$. However, if we only work on the setting of Theorem \ref{Deligne} but allow  $s>0$ then we can get a simple bound of the exponential sum by using Theorem \ref{Deligne} and induction on $s$ in a suitable manner.
\begin{prop}{\cite[Section 3.5]{CluckerNguyen}}\label{Deligne-Katz}Let $f\in\ZZ[x_1,...,x_n]$ be a polynomial of degree $d>1$ and $s$ be given as above. Let $p$ be a large enough prime and  $k$ be a finite field of characteristic $p$ then  $$||k|^{-n}\sum_{x\in k^n}\psi(\tr_{k/\FF_p}(f(x)))|\leq (d-1)^{n-s}|k|^{-(n-s)/2}.$$
\end{prop}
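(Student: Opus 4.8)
The plan is to reduce the bound to Deligne's estimate (Theorem~\ref{Deligne}) by partitioning $k^n$ into affine subspaces of dimension $n-s$ on which $f$ restricts, up to translation, to a Deligne polynomial. If $s=0$ then $f$ is itself Deligne and the statement is Theorem~\ref{Deligne}, so assume $s\ge 1$. Note also $s\le n-1$: if $\Crit(f_d)=\AA^n$ then every $\partial_i f_d$ vanishes identically, whence by the Euler relation $d\,f_d\equiv 0$ and so $f_d\equiv 0$ (we are in characteristic zero), contradicting $\deg f=d$.

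The geometric input I would establish first is: there is a linear subspace $W\subseteq\AA^n$ of dimension $n-s$, defined over $\QQ$, such that $f_d|_W$ is a Deligne polynomial in $n-s$ variables of degree exactly $d$. Here $X:=\{f_d=0\}\subseteq\PP^{n-1}$ is a genuine hypersurface, and (again by the Euler relation) its singular locus is the projectivization of $\Crit(f_d)$, so $\dim\Sing(X)=s-1$. For a generic linear subspace $L\subseteq\PP^{n-1}$ of dimension $n-s-1$ one has $L\cap\Sing(X)=\emptyset$, since $(n-s-1)+(s-1)<n-1$; and, working in characteristic zero, iterated application of Bertini's theorem to the smooth locus $X\setminus\Sing(X)$ shows that $L\cap X$ is then smooth. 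Also $L\not\subseteq X$ for generic $L$ (as $f_d\not\equiv 0$), so $f_d|_W\not\equiv 0$. Taking $W$ to be the cone over such an $L$ gives the claim, and since $\Gr(n-s,n)$ is rational over $\QQ$ the subspace $W$ can be chosen over $\QQ$; after rescaling and inverting a suitable integer $N$ I may assume $W$ is a direct summand of $\ZZ^n$, with complement $W'$, over $\ZZ[1/N]$.

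Now let $p$ be large --- in particular $p\nmid N$, $p\nmid d$, and $p$ large enough that $f$ and $f_d|_W$ remain Deligne modulo $p$ (the good-reduction observation in the Remark after Theorem~\ref{Deligne}) --- and let $k$ be any finite field of characteristic $p$. Partition $k^n=\bigsqcup_{t\in W'(k)}\bigl(t+W(k)\bigr)$, a disjoint union of $|k|^{s}$ cosets. Fixing a $k$-basis of $W(k)$ to identify it with $k^{n-s}$, for each $t$ the map $w\mapsto f(t+w)$ is a polynomial $g_t\in k[w_1,\dots,w_{n-s}]$ whose degree-$d$ homogeneous part equals $f_d|_W$ --- a translation does not affect the top-degree form --- so $g_t$ is a Deligne polynomial in $n-s$ variables of degree $d$. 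Applying Deligne's theorem (Theorem~\ref{Deligne}, over $k$) to each $g_t$ gives
\[
\Bigl|\sum_{w\in k^{n-s}}\psi\bigl(\tr_{k/\FF_p}(g_t(w))\bigr)\Bigr|\le (d-1)^{n-s}\,|k|^{(n-s)/2}.
\]
Summing over the $|k|^{s}$ cosets and dividing by $|k|^{n}$ yields
\[
\Bigl|\,|k|^{-n}\sum_{x\in k^n}\psi\bigl(\tr_{k/\FF_p}(f(x))\bigr)\Bigr|\le |k|^{s-n}(d-1)^{n-s}|k|^{(n-s)/2}=(d-1)^{n-s}|k|^{-(n-s)/2},
\]
as claimed. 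Equivalently one can run the argument as an induction on $s$, peeling off a single generic hyperplane through the origin at each step and using that such a section lowers $\dim\Crit(f_d)$ by one.

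The one delicate point is the geometric lemma in the second paragraph: that a generic linear section of $X$ of complementary dimension to $\Sing(X)$ is smooth, and that it can be taken over $\QQ$ so that the choice --- and hence the estimate --- is uniform in all large $p$. Everything after that, namely the coset decomposition and the bookkeeping of the estimate, is routine.
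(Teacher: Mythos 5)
Your argument is correct and is essentially the proof the paper has in mind (and that the cited reference carries out): slice $\AA^n$ by a generic linear subspace $W$ of dimension $n-s$, defined over $\QQ$, on which $f_d$ restricts to a Deligne polynomial, then apply Theorem~\ref{Deligne} on each of the $|k|^s$ cosets of $W(k)$, which --- as you note at the end --- is the same as an induction on $s$ peeling off one generic hyperplane at a time, exactly the slicing the paper later reuses in Corollary~\ref{uniformlocal2} and Proposition~\ref{generalZ}. One wording slip: ``$p$ large enough that $f$ and $f_d|_W$ remain Deligne modulo $p$'' should read ``$f_d|_W$ remains Deligne modulo $p$'' (together with $f \bmod p$ retaining degree $d$), since $f$ itself is not a Deligne polynomial when $s>0$; all you actually need is that $f_d|_W \bmod p$ has zero-dimensional critical locus over $\overline{\FF}_p$ and $p\nmid d$, which your good-reduction remark does supply.
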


A natural question that one may ask is whether an analogue of Proposition \ref{Deligne-Katz} exists for exponential sums over finite rings $\ZZ/N\ZZ$, i.e., the exponential sums
$$E_{f}(\psi)=\frac{1}{N^n}\sum_{x\in (\ZZ/N\ZZ)^n} \psi(f(x)),$$
where $N\neq 0$ and $\psi$ is any group monomorphism from $\ZZ/N\ZZ$ to $\CC^{\times}$. More general, we also ask this question for exponential sums
$$E_{f}(\psi)=\frac{1}{|\cO_K/I|^n}\sum_{x\in (\cO_K/I)^n} \psi(f(x)),$$
where $\cO_K$ is the ring of integers in a number field $K$, $I$ is any non-zero ideal of $\cO_K$ and $\psi$ is any group monomorphism from $\cO_K/I$ to $\CC^{\times}$. This question lies in the context of Igusa's conjecture on exponential sums and its relation with the Hasse principle as seen later. Inspired by the strong monodromy conjecture (see Section \ref{setup} for a formulation), Igusa's conjecture on exponential sums (see for instance \cite{Igusa3,CMN}) and the recent work on the minimal exponent of Musta{\c{t}}{\v{a}} and Popa in \cite{MustPopa}, Cluckers and the author proposed in \cite{CluckerNguyen} a conjecture as follows:
\begin{conj}\label{conj1}
Given $f$, $n$, $s$, and $d$ as above. For each $\varepsilon>0$, there is a constant $C_{\epsilon}>0$ such that
\begin{equation}\label{bound:SfN}
|E_f(\psi)|\leq C_\epsilon  N^{-\frac{n-s}{d}+\varepsilon}.
\end{equation}
for all integers $N\geq 1$, all group monomorphisms $\psi:\ZZ/N\ZZ\to\CC^{\times}$.
\end{conj}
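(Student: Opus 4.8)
The plan is to derive \eqref{bound:SfN} from a reduction to prime powers followed by a sharp bound on an oscillation index of $f$. By the Chinese Remainder Theorem, $E_f(\psi)=\prod_{p^{m}\|N}E_{f,p^{m}}(\psi_{p})$, where for each exact prime‑power divisor $p^{m}$ of $N$ the character $\psi_{p}$ is primitive modulo $p^{m}$ and $E_{f,p^{m}}(\psi_p)=p^{-mn}\sum_{x\in(\ZZ/p^{m}\ZZ)^{n}}\psi_{p}(f(x))$. So it suffices to produce a finite set $S=S(f,\varepsilon)$ of primes with $|E_{f,p^{m}}(\psi_p)|\le C_{p,\varepsilon}\,p^{-m((n-s)/d-\varepsilon)}$ for $p\in S$, and the \emph{clean} estimate $|E_{f,p^{m}}(\psi_p)|\le p^{-m((n-s)/d-\varepsilon)}$ for $p\notin S$, valid for all $m\ge1$; multiplying over the $p\|N$ then yields \eqref{bound:SfN} with $C_{\varepsilon}=\prod_{p\in S}C_{p,\varepsilon}<\infty$. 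For $m=1$ this follows at once from Proposition \ref{Deligne-Katz}, since $(d-1)^{n-s}p^{-(n-s)/2}\le p^{-(n-s)/d}$ as soon as $p$ is large (with equality already when $d=2$). Thus the content lies in the range $m\ge2$.

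Next I would run Igusa's method. For a fixed prime $p$ the Poincar\'e‑type series $\sum_{m\ge0}E_{f,p^{m}}(\psi_p)T^{m}$, organised according to the conductor of $\psi_p$, is a rational function of $T$ whose poles, by Denef's explicit formula, are computed from the numerical data $(N_i,\nu_i)$ of a log resolution of $f$ together with the critical values of $f$ along the exceptional divisors surviving the character twist. Consequently $|E_{f,p^{m}}(\psi_p)|$ decays like $p^{-m\sigma}$ up to a factor $m^{c(n)}$ produced by pole multiplicities, where $\sigma$ is the relevant largest exponent, namely the $p$‑adic motivic oscillation index $\moi_p(f)$ taken over the compactification $\PP^{n}$ so as to include the contribution at infinity. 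The polynomial factor is harmless: $m^{c(n)}\le p^{m\varepsilon}$ for all $m\ge1$ once $p>e^{c(n)/(e\varepsilon)}$, so the finitely many primes below this bound, together with the genuinely bad primes of $f$, are placed in $S$ and the $\varepsilon$ in the exponent absorbs the rest. The problem then collapses to the geometric inequality $\moi(f)\ge(n-s)/d$, proved uniformly in $p$; uniformity is exactly why one must argue in the motivic framework of Cluckers and Loeser rather than with one $p$‑adic resolution at a time.

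To prove $\moi(f)\ge(n-s)/d$ I would split the oscillation index of $f$ into the part coming from the critical points of $f$ over $\overline{\QQ_p}$ and the part coming from the behaviour at infinity, the latter governed by the leading form $f_d$; the index is the smaller of the two, and the lower‑degree terms of $f$ only help. For the part at infinity, compactify $\AA^{n}\hookrightarrow\PP^{n}$: near the hyperplane at infinity $f$ looks like $y_0^{-d}$ times a unit along the generic point, which gives the candidate $n/d$, and over the locus above $\Sing V(f_d)$, of dimension $s$, one blows up further, an induction on $s$ modelled on the derivation of Proposition \ref{Deligne-Katz} from Theorem \ref{Deligne} — each singular direction peeled off replacing `$n$' by `$n-1$' — showing every surviving candidate is $\ge(n-s)/d$. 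For the finite critical points, a Thom--Sebastiani‑type splitting reduces the local index to a sum of the index of a form in fewer variables and a multiple of $1/d$, and a recursion on the number of variables again leaves only the bound for forms. Both halves therefore repackage the lower bound $(n-s)/d$ for the minimal exponent of a degree‑$d$ form whose singular locus has dimension $s$, in the spirit of \cite{MustPopa}.

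The step I expect to be the genuine obstacle is not this geometry but its faithful passage to arithmetic: proving the Igusa conjecture for exponential sums with the \emph{optimal} exponent $\moi(f)$, uniformly in $p$ and including wildly ramified $\psi_p$ and bad reduction. That statement is itself open in general. The technique actually available for $E_{f,p^{m}}(\psi_p)$ splits the sum into a minor‑arc piece estimated by Deligne's Theorem \ref{Deligne} and a major‑arc piece estimated by an Archimedean van der Corput / stationary‑phase bound for the cone over $V(f_d)$, and the latter only delivers the exponent $(n-s)/(2(d-1))$, not $(n-s)/d$; closing the gap would require either an optimal, uniform‑in‑$p$ resolution‑theoretic bound for the twisted local zeta function of $f_d$ — in particular a good handle on non‑rational singularities, where $\lct$ ceases to be the controlling invariant and one must invoke the $V$‑filtration / minimal exponent of \cite{MustPopa} — or a genuinely new arithmetic input on the major arcs. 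For $d=2$ the two exponents coincide and the scheme above already proves \eqref{bound:SfN}; for $d\ge3$ this last step is, to the best of my knowledge, open, and the present paper instead establishes the intermediate exponent $(n-s)/(2(d-1))$, which already suffices for the circle‑method applications discussed in the introduction.
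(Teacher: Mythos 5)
The statement you are proving is stated in the paper only as a conjecture, and the paper does not prove it: it establishes the weaker bound with exponent $(n-s)/(2(d-1))$ in place of $(n-s)/d$ (see \ref{globalfield}). Your reduction --- the Chinese Remainder Theorem factorization \eqref{eq:p-i}, Proposition \ref{Deligne-Katz} for $m=1$, and the split into the finitely-many-bad-primes bound \eqref{allp} and the large-$p$ bound \eqref{largep} --- reproduces exactly the paper's own discussion in the introduction, and you correctly identify the decisive obstruction: the inequality $\moi(f)\ge (n-s)/d$, which the paper explicitly describes as ``a very hard problem at this moment.'' So the proposal is a roadmap rather than a proof, and you say as much; but two of its steps should be flagged as genuinely unproved rather than merely technical.

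First, the small-prime step. Having only finitely many bad primes does not let you absorb them into $C_\varepsilon$: for each fixed bad prime $p$ you still need a bound $|E_{f,p^m}(\psi_p)|\le C_{p,\varepsilon}\,p^{-m((n-s)/d-\varepsilon)}$ decaying at the correct rate in $m$, and this requires knowing that the real parts of all poles of the twisted Igusa local zeta functions over that particular $p$-adic field --- including highly ramified twists, where no log-resolution with good reduction is available --- are at most $-(n-s)/d$. The paper's \ref{smallprime2} achieves this only down to $\lct((f)+J_f^2)$, which is shown to exceed $(n-s)/(2(d-1))$ but is not known to reach $(n-s)/d$. Second, the large-prime step: your geometric argument for $\moi(f)\ge(n-s)/d$ (Thom--Sebastiani splitting at finite critical points, ``lower-degree terms only help,'' induction on $s$ by hyperplane sections at infinity) is heuristic; the oscillation index at a finite critical point is not controlled by $f_d$ alone, and the induction deriving Proposition \ref{Deligne-Katz} from Theorem \ref{Deligne} lives at the level of a single residue field and yields the exponent $(n-s)/2$ there, not $(n-s)/d$ per power of $p$, so it does not transfer to the regime $m\ge 2$. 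Your closing assessment is therefore the right one: except when $d=2$, where $(n-s)/d=(n-s)/(2(d-1))$ and \ref{globalfield} does give \eqref{bound:SfN}, the conjecture remains open, and no proof is contained either in your proposal or in the paper.
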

By the Chinese Remainder Theorem, if one writes $N=\prod_{1\leq i\leq r}p_i^{m_i}$ for distinct prime numbers $p_i$ and integers $m_i>0$, then  for each group monomorphism $\psi:\ZZ/N\ZZ$, there is a group monomorphism $\psi_i:\ZZ/p_i^{m_i}\to\CC^{\times}$  for each $1\leq i\leq r$ such that
\begin{equation}\label{eq:p-i}
E_{f}(\psi) = \prod_{1\leq i\leq r} E_f(\psi_i)
\end{equation}
Because of Formula (\ref{eq:p-i}) and Proposition \ref{Deligne-Katz}, to prove Conjecture \ref{conj1}, it is sufficient to show that for each $\epsilon>0$, there is a constant $C_{\epsilon}$ such that for all primes $p$, all $m>0$ and all group monomorphisms $\psi:\ZZ/p^m\to\CC^{\times}$ one has
\begin{equation}\label{allp}
|E_{f}(\psi)|\leq C_{\epsilon}p^{m(-\frac{n-s}{d}+\epsilon)},
\end{equation}
in addition, when $p$ is large enough and $m>1$ one also needs
\begin{equation}\label{largep}
|E_{f}(\psi)|\leq p^{m(-\frac{n-s}{d}+\epsilon)}.
\end{equation}

Inequality (\ref{largep}) for large $p$ and $m>1$ relates to Igusa's conjecture on exponential sums. Let us explain this in more detail. Let $p$ be a prime and $m>0$ be an integer. Let $\psi:\ZZ/p^m\ZZ\to \CC^{\times}$ be a group monomorphism. There exists an additive character $\tilde{\psi}$ of $\QQ_p$ of conductor $m$, i.e., 
$\tilde{\psi}|_{p^{m}\ZZ_p}=1$ and $\tilde{\psi}|_{p^{m-1}\ZZ_p}\neq 1$,  such that
\begin{equation}\label{rephrase}
E_f(\psi)=E_f(\tilde{\psi}):=\int_{\ZZ_p^n}\tilde{\psi}(f(x))|dx|,
\end{equation}
where $|dx|$ is the Haar measure on $\QQ_p^n$ such that the volume of $\ZZ_p^n$ is $1$. The integral in (\ref{rephrase}) helps us to generalize the notion of exponential sums to all non-Archimedean local fields. We refer to Section \ref{setup} for all details; here we mention the main ideas.  Let $K$ be a number field and $\cO_K$ be its ring of integers. Let $Z$ be a subscheme of $\AA^n_{\cO_K}$. Let $L$ be a non-Archimedean local field over $\cO_K$, i.e, $L$ is endowed with a structure of $\cO_K$-algebra. We associate to $Z$ the Schwartz-Bruhat function $\Phi_{Z,L}:=\textbf{1}_{\{x\in \cO_L^n|\overline{x}\in Z(k_L)\}}$ on $L^n$, where $\cO_L$ is the ring of integers in $L$, $k_L$ is the residue field of $L$ and $\overline{x}$ is the reduction of $x$ modulo the maximal ideal $\cM_L$ of $\cO_L$. Let $f\in\cO_K[x_1,...,x_n]$ and $\psi$ be a non-trivial additive character of $L$.  We associate to $f,\psi,Z$ the exponential sum 
$$E_{f}(Z,\psi)=\int_{\cO_L^n}\Phi_{Z,L}\psi(f(x))|dx|.$$
When $Z=\AA^n_{\cO_K}$, we write $E_f(\psi)$ instead of $E_{f}(\AA^n_{\cO_K},\psi)$. Now we recall a simple form of Igusa's conjecture on exponential sums.
\begin{conj}\label{conj2}Let $f\in\cO_K[x_1,...,x_n]$ be a non constant polynomial. Let $Z$ be a subscheme of $\AA^n_{\cO_K}$. Let $\sigma$ be a positive real number. Suppose that for each non-Archimedean local field $L$ over $\cO_K$ of large enough residue field characteristic, there is a constant $c_L$ such that 
\begin{equation}\label{Igu-conj}
|E_f(Z,\psi)|\leq c_L|k_L|^{-m_{\psi}\sigma}
\end{equation} 
for all additive characters $\psi$ of $L$ of conductor $m_\psi>1$ (see Section \ref{setup} for the definition of $m_\psi$). Then in the inequality (\ref{Igu-conj}) one can take $c_L$ independent of $L$ provided that $k_L$ is of large enough characteristic.
\end{conj}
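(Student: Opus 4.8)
\smallskip
\noindent\textbf{Proof strategy.}
The plan is to reduce the statement to a single uniform quantitative estimate: that there is a constant $c$, depending only on $f$ and $Z$, with $|E_f(Z,\psi)|\le c\,|k_L|^{-m_\psi(n-s)/(2(d-1))}$ for every non-Archimedean local field $L$ over $\cO_K$ of large enough residue characteristic and every additive character $\psi$ of $L$ with $m_\psi>1$. Granting this, the asserted uniformity is immediate; in fact it sharpens $\sigma$ to $(n-s)/(2(d-1))$ whenever the hypothesis holds for some $\sigma$ in that range. By the Chinese Remainder decomposition \eqref{eq:p-i} and the rephrasing \eqref{rephrase} one may assume $K=\QQ$, $Z=\AA^n_\ZZ$, and $E_f(\psi)=\int_{\cO_L^n}\psi(f(x))\,|dx|$ with $\psi$ of conductor $m>1$; Proposition~\ref{Deligne-Katz} settles $m=1$ and serves as the base of an induction on $m$.

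For a fixed $L$ with residue field of size $q$, I would partition $\cO_L^n$ according to the $L$-adic size of $\nabla f$ --- equivalently, after the rescaling forced by homogeneity of $f_d$, according to the distance to $\Crit(f_d)$. On the locus where $\nabla f$ is a unit, integrating each residue ball by straightening a coordinate either kills the contribution or returns an integral of strictly smaller conductor; iterating replaces $E_f(\psi)$, up to a controlled power of $q$, by a bounded number of finite-field sums $q^{-n}\sum_{x\in k_L^n}\psi_0(\bar g(x))$ whose top-degree parts still have critical locus of dimension $\le s$, to which Theorem~\ref{Deligne} and Proposition~\ref{Deligne-Katz} apply and contribute the $q^{-(n-s)/2}$ saving. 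The rescalings needed to reach unit gradient cost a power of $q$ controlled by the degree, and this is precisely where the factor $2(d-1)$ enters: morally $d-1$ steps of Weyl / van der Corput differencing, doubled by the square-root cancellation of Deligne's bound. On the complementary tube around $\Crit(f_d)$ one combines a crude volume estimate with the induction hypothesis applied to the lower-dimensional pieces produced by homogeneity of $f_d$, the definition of $s$ controlling the measure of the tube and closing the induction. Equivalently one may run Igusa's stationary-phase method on a fixed log resolution of $f$ (or of $(f)+J_f^2$), so that $|E_f(\psi)|$ decays like a negative power of $q$ read off from the numerical data of the resolution --- that is, governed by the motivic oscillation index $\moi(f)$ --- and then prove the arithmetic lower bound $\moi(f)\ge (n-s)/(2(d-1))$ with the same input; this is the form in which the improvement from Birch's $n-s>2^d(d-1)$ to $n-s>4(d-1)$ for the major arcs is transparent.

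For uniformity in $L$: the resolution, the stratification, and all changes of variables are defined over $\cO_K[1/M]$ for a suitable integer $M$, hence have good reduction at all places of residue characteristic prime to $M$, so the numerical data, and therefore $c$ and the exponent, are the same for all such $L$. More formally, $E_f(Z,\psi)$ is a motivic exponential function in the Denef--Pas / Cluckers--Loeser sense, uniformly in $(L,\psi,m_\psi)$; one then invokes the uniform-boundedness results for such functions, together with the transfer principle (which moves the estimate between mixed- and equal-characteristic local fields), to upgrade the per-field constants $c_L$ given by the hypothesis to a single $c$. The finitely many remaining small $p$, and $f$ of bad reduction, are treated separately by resolving over $\cO_L$ and absorbing them into $c$.

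The step I expect to be the main obstacle is pinning the exact exponent: one must verify that the power of $q$ lost in rescaling to unit gradient is exactly offset by the square-root saving in the $n-s$ variables that survive to the residue-field sum, while keeping the tube around $\Crit(f_d)$ thin enough that the inductive contribution never dominates --- a delicate bookkeeping of valuations through the differencing steps, and it is this that fixes the exponent at $(n-s)/(2(d-1))$ rather than the conjecturally optimal $(n-s)/d$. Making the motivic formalism cooperate with the non-integer exponent (via $\lfloor m\sigma\rfloor$ and rational approximation) and handling the bad-reduction and small-$p$ cases are the remaining, more routine, difficulties.
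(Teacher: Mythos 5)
Conjecture~\ref{conj2} is presented in the paper precisely as a \emph{conjecture}; the paper does not prove it. What the paper actually establishes is the unconditional bound \ref{generalZ2} for the one specific exponent $\sigma=(n-s)/(2(d-1))$, which shows only that this particular $\sigma$ satisfies the conclusion of Conjecture~\ref{conj2} --- not that the implication in Conjecture~\ref{conj2} holds for every $\sigma$.

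Your opening reduction is therefore not valid. You propose to "reduce the statement" to the uniform bound $|E_f(Z,\psi)|\leq c\,|k_L|^{-m_\psi(n-s)/(2(d-1))}$, and you yourself flag the caveat that this only "sharpens $\sigma$ \dots whenever the hypothesis holds for some $\sigma$ in that range." But the hypothesis of Conjecture~\ref{conj2} can hold, and is expected to hold, for $\sigma$ strictly larger than $(n-s)/(2(d-1))$: the motivic oscillation index $\moi_{K,Z}(f)$ governs the admissible range, and by Conjecture~\ref{conj1} one expects $\moi_{\QQ,\AA^n_\ZZ}(f)\geq(n-s)/d$, which exceeds $(n-s)/(2(d-1))$ once $d\geq 3$ (and, depending on the singularities, $\moi$ can be even larger, up to $\min_a\lct_a(f-f(a))$). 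For such $\sigma$ your argument produces no control at all on the constant; the nontrivial content of the conjecture is exactly this uniformity for the sharpest decay exponent, and that is what the reduction discards. Reducing Conjecture~\ref{conj2} to Theorem~A trivializes the conjecture rather than proving it.

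As a sketch of Theorem~A the remainder of the proposal touches some of the right ingredients --- Deligne/Katz square-root cancellation over the residue field, log resolutions with good reduction, the model-theoretic/motivic transfer to get uniformity across $L$, and the CKMu-style use of $(f)+J_f^2$ --- but it is far too loose to be a proof, and it mixes in Birch--Browning--Prendiville Weyl-differencing language ("$d-1$ steps of differencing, doubled") that plays no role in the paper's argument and, if pushed through literally, yields a $2^{d-1}$-type loss rather than $2(d-1)$. The paper instead proves the bound at critical points by passing to the weighted-homogeneous jet polynomials $f_{P,m}$ (Lemma~\ref{locussing}), applying Katz's finite-field estimate uniformly (Propositions~\ref{sumsfinite1}--\ref{sumsfinite2}), proving the log-canonical-threshold inequality $\lct_Q(J_f)\geq(n-s)/(d-1)$ (Corollary~\ref{uniformlocal2}), gluing over $Z$ via Lemma~\ref{padic} and an induction on dimension, removing the $m_\psi^{n-1}$ factor through the strict inequality $\lct_0((f)+J_f^2)>(n-s)/(2(d-1))$ (Proposition~\ref{generalZ1}), and finally transferring to positive characteristic via Propositions~\ref{transfer1}--\ref{transfer2}. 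None of this machinery appears in your sketch in usable form, and the key quantitative lemmas (the $\lct$ lower bounds, the removal of the polynomial-in-$m_\psi$ factor) are absent. In short: the full Conjecture~\ref{conj2} remains open, and the proposal does not close the gap.
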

\begin{rem}
The supremum taken over all $\sigma$ satisfying the hypothesis of Conjecture \ref{conj2} is the motivic oscillation index $\moi_{K,Z}(f)$ of $f$ at $Z$ over $K$ (see \cite{CMN,NguyenVeys}). This index relates to the largest non-trivial pole of the Igusa local zeta functions (see Section \ref{setup} for the definition of these functions) associated with $f,Z,L$ when the local field $L$ is of large enough residue field characteristic. If $f-a$ has non-rational singularities at $Z(\CC)$ for some critical value $a$ of $f$ then Cluckers, Musta{\c{t}}{\v{a}} and the author proved Conjecture \ref{conj2} for any $\sigma<\moi_{K,Z}(f)=\min_{a\in Z(\CC)}\lct_{a}(f-f(a))$ in \cite{CMN}, where  $\lct_a(g)$ is the log-canonical threshold of $g$ at $a$ (see Section \ref{setup} for the definition of the log-canonical threshold). Recently, Cluckers, Koll\'ar and  Musta{\c{t}}{\v{a}} showed in \cite{CKMu} that $\min_{a\in Z(\CC)}\lct_a((f-f(a))+J_f^2)$ is a lower bound of $\moi_{K,Z}(f)$, where $J_f$ is the Jacobian ideal of $f$ and $\lct_a((f-f(a))+J_f^2)$ is the log-canonical threshold of the ideal $(f-f(a))+J_f^2$ at $a$.

 Within (\ref{largep}), we expect that $\sigma=(n-s)/d$ satisfies the hypothesis of Conjecture \ref{conj2} for $Z=\AA_\ZZ^n$. In other word, we should have $\moi_{\QQ,\AA_\ZZ^n}(f)\geq (n-s)/d$. However,  this seems to be a very hard problem at this moment. 
\end{rem}
\begin{rem}
Suppose one can show inequality (\ref{largep}) for $p>M$ and $m>1$. To prove Conjecture \ref{conj1}, it remains to prove (\ref{allp}) for $p\leq M$. This question leads us to the study of non-trivial poles of the Igusa local zeta functions over $p$-adic fields where we do not know the existence of a log-resolution of $(f)$ with good reduction (see section \ref{setup} for the definitions). Up to present, there is no effective tools to deal with this question. Indeed, the main strategy in the works of Igusa is to find a log-resolution of $(f)$ and compute candidates of poles of the Igusa local zeta functions based on the numerical data associated with this log-resolution. More precisely, let $(\nu_i,N_i)_{i\in\cT}$ be this numerical data, we need that $\min_{i\in\cT, (\nu_i,N_i)\neq (1,1)}\nu_i/N_i$ is at least $(n-s)/d$ to derive (\ref{allp}) for small $p$ . However, it is not known how to construct such a log-resolution with this property in the general case. 
\end{rem}
In this paper we will prove a weaker version of Conjecture \ref{conj1} for $(n-s)/(2(d-1))$ instead of $(n-s)/d$. First of all, we will show that  Conjecture \ref{conj2} holds for $\sigma\leq (n-s)/(2(d-1))$ as follows. 
\begin{named}{Theorem A}\label{generalZ2}Suppose that $Z$ is a subscheme of $\AA_{\cO_K}^n$ whose complexity bounded by $(R,D)$, i.e., $Z_K$ can be defined by $R$ polynomials in $n$ variables of degree at most $D$. There exist a constant $M$ depending only on $f,Z$ and a constant $C$ depending only on $n,d,R,D$ such that for all non-Archimedean local fields $L$ over $\cO_K$ of residue field characteristic at least $M$ and all additive characters  $\psi$ of $L$ of conductor $m_\psi\geq 2$ we have
$$|E_{f}(Z,\psi)|\leq C|k_L|^{\frac{-m_\psi(n-s)}{2(d-1)}}.$$
\end{named}
The main idea for the proof of \ref{generalZ2} comes from the recent work of Veys and the author in \cite{NguyenVeys}, where we constructed a way to view Conjecture \ref{conj2} as a conjecture for a family of exponential sums over finite fields. Then we can use a consequence of Katz's result in \cite{CDenSperlocal} for exponential sums associated with weighted homogeneous polynomials over finite fields to give a uniform version of \ref{generalZ2} when $Z$ is any closed point of $\AA_{\overline{\QQ}}^n$. Combining this with an idea from \cite{Cigumodp} about constructible motivic functions helps us to prove \ref{generalZ2} for any scheme $Z$.

To show (\ref{allp}) for $(n-s)/(2(d-1))$ instead of $(n-s)/d$, we will show first that $(n-s)/(2(d-1))<\lct_a((f-f(a))+J_f^2)$ for all $a$. After that, we extend a result of Cluckers, Koll\'ar and Musta{\c{t}}{\v{a}} in \cite{CKMu} to the case of all $p$-adic fields and all Schwartz-Bruhat functions.  Namely, we prove that:
\begin{named}{Theorem B}\label{smallprime2}Let $L$ be a $p$-adic field. Suppose that $f$ is a non-constant polynomial in $L[x_1,...,x_n]$. For each  Schwartz-Bruhat function $\Phi$ on $L^n$ and each $0<\sigma<\sigma_0:=\min_{a\in\Supp(\Phi)}\lct_{a}((f-f(a))+J_f^2)$, there is a constant $c(\sigma,\Phi)>0$ such that for all additive characters $\psi$ of $L$ we have
\begin{equation}\label{apasb}
|E_{f}(\Phi,\psi)|:=|\int_{L^n}\Phi(x)\psi(f(x))|dx||\leq c(\sigma,\Phi)|k_L|^{-m_\psi\sigma}.
\end{equation}
\end{named}
As mentioned above, Igusa's idea on the numerical data of log-resolution is hard to use. In fact, we do not know a way to construct a log resolution of $(f)$ with the numerical data $(\nu_i,N_i)_{i\in \cT}$ such that $\min_{i\in\cT, (\nu_i,N_i)\neq (1,1)}\nu_i/N_i\geq \lct_a((f)+J_f^2)$ for all singular points $a$ of $f$. In this paper, we will try to prove (\ref{apasb}) directly, as in \cite{CKMu} but with a more advanced technique. More precisely, we need to control well twisted Igusa local zeta functions associated with higher ramifield characters. As in \cite{CKMu}, \ref{smallprime2} also implies some consequence in Section \ref{small} related to the strong monodromy conjecture in the ranges $(-\lct(f,J_f^2),0]$ and $[-(n-s)/(2(d-1)),0]$. This consequence extends the result in \cite{CKMu} to the context of all $p$-adic fields and all Schwartz-Bruhat functions.

By using \ref{generalZ2} and \ref{smallprime2}, we will show in Section \ref{small} that:
\begin{named}{Theorem C}\label{globalfield}
Let $K, \cO_K$ be as above. Suppose that $f$ is a non-constant polynomial in $\cO_K[x_1,...,x_n]$ of degree $d>1$. Let $s=s(f)$ be as above. For each finite extension $K'$ of $K$ and $\epsilon>0$,  there is a constant $c_{K',\epsilon}$ such that for all non-zero ideals $I$ of $\cO_{K'}$ and all group monomorphisms $\psi:\cO_{K'}/I\to\CC^{\times}$ we have 
$$|E_{f}(\psi)|\leq C_{K',\epsilon}|\cO_{K'}/I|^{-\frac{n-s}{2(d-1)}+\epsilon}$$
\end{named}

As mentioned in \cite{Igusa3,CluckerNguyen}, Conjecture \ref{conj1} will help to obtain a simple condition for the validity of a certain ad\`elic Poisson summation formula and the estimation for the major arcs towards the smooth Hasse principle of $f$. More precisely, we expect that this condition is $n-s>2d$. Furthermore, we may hope that there is an improvement of Birch's result in \cite{Birch} for the smooth Hasse principle related to this condition in the future. 

Note that the current condition for the validity of the estimation for the major arcs relates to Birch's version in \cite{Birch} where he claimed that $n-s>2^d(d-1)$ is a sufficient condition. In \cite{BrowPren}, Browning and Prendiville improved Birch's condition  to $n-s>\frac{3}{4}2^d(d-1)$. In fact, using \ref{globalfield} and the result of Igusa on zeta functions over Archimedean local fields in \cite{Igusa3}, we can improve Birch's condition to $n-s>4(d-1)$ (see Section \ref{major}). Thus although Conjecture \ref{conj1} has not been proved yet, the result of this paper is still good enough for further applications.

In fact, even if we can improve the estimation for the major arcs, it remains a huge challenge to overcome the estimation for the minor arcs to obtain a new form of the smooth Hasse principle (see Section \ref{major}).  However, with the result of this paper, any improvement of the estimation for the minor arcs in the future may automatically imply a new form of the smooth Hasse principle. On the other hand, the estimation for the major arcs appears in many researches in number theory related to distribution of rational points in algebraic varieties under the Hardy-Littlewood circle method. Therefore, the result of this paper may streamline much future research related to this topic.
\section{Notation and preliminary results}\label{setup}
 Let $R$ be a commutative ring with unit $1\neq 0$ and $f\in R[x_1,...,x_n]$. We denote by $Z(f)$ the $R$-scheme associated with the principal ideal $(f)$. We also denote by $C_f$ the $R$-scheme of critical points of $f$, i.e. the closed subscheme of $\AA_{R}^n$ associated with the Jacobian ideal $J_f$ generated by polynomials $\frac{\partial f}{\partial x_i}, 1\leq i\leq n$.  If $f\in R[x_1,...,x_n]$ is a polynomial, we say that $f$ is weighted homogeneous if there exists a tuple $(r_1,...,r_n,r)\in\ZZ_{>0}^{n+1}$ such that $$f(\lambda^{r_1}x_1,...,\lambda^{r_n}x_n)=\lambda^{r}f(x_1,...,x_n)$$ for all $\lambda\in R'$, all $R$-algebras $R'$. If $R$ is a field and  $f$ is weighted homogeneous then $C_f$ is equal to the singular locus $\Sing(f)$ of the $R$-scheme $Z(f)$.   

Let $F$ be a field, and, fix an algebraic closure $\overline{F}$ of $F$. Let $K$ be a number field, and, denote by $\cO_K$ the ring of integers in $K$ and $\cO_K^{\rm alg}$ the integral closure of $\cO_K$ in $\overline{K}$. In this paper, a local field over $\cO_K$ is  a finite extension $L$ of $\QQ_p$ or $\FF_p((t))$ for some prime $p$ such that moreover $L$ is equipped with a structure  of $\cO_K$-algebra. For each integer $M$, we denote by $\cL_{K,M}$ the set of all local fields over $\cO_K$ of characteristic zero and of residue field characteristic at least $M$. We denote by $\cL'_{K,M}$ the set of all local fields over $\cO_K$ of characteristic at least $M$ and we put $\tilde{\cL}_{K,M}=\cL_{K,M}\cup\cL'_{K,M}$. 

Let $L$ be a local field over $\cO_K$. We denote by $\cO_L$ and $\cM_L$ the valuation ring of $L$ and its maximal ideal, respectively, and by $k_L=\cO_L/\cM_L$ the residue field of $L$, of characteristic $p_L$ and cardinality $q_L$. For $x\in L$, we denote by $\ord_L(x) \in \ZZ\cup\{+\infty\}$ its valuation and  by $|x|_L=q_L^{-\ord_L(x)}$ its absolute value. We fix a uniformizing parameter $\varpi_L$ for $\cO_L$, and denote by $ac:L\to\cO_L^{\times}$ the map given by $ac(x)=x\varpi_L^{-\ord_L(x)}$ if $x\neq 0$ and $ac(0)=0$.
For $x\in \cO_L$, we write $\overline{x}$ for the image of $x$ by the residue map $\cO_L\to \cO_L/\cM_L=k_L$, and we put $\ac(x)=\overline{ac(x)}$ if $x\in L$.

For each positive integer $n$, we endow $L^n$ with  the Haar measure $|dx|$ which is normalized such that the volume of $\cO_L^n$ is $1$.

\medskip
A {\em multiplicative character} $\chi$ of $\cO_L^{\times}$ is a continuous homomorphism $\chi:\cO_L^{\times}\to \CC^{\times}$ with finite image. We also put $\chi(0)=0$.
If $\chi$ is a multiplicative character, the {\em conductor} $c(\chi)$ of $\chi$ is the smallest integer $e\geq 1$ such that $\chi$ is trivial on $1+\varpi_L^e\cO_L$. When $c(\chi)=1$ then $\chi$ induces a multiplicative character $\overline{\chi}:k_L^{\times}\to\CC^{\times}$.
An {\em additive character} of $L$ is a continuous homomorphism $\psi:L\to\CC^{\times}$. If $\psi$ is a non-trivial additive character, the {\em conductor} $m_{\psi}$ of $\psi$ is the integer $m$ such that $\psi$ is trivial on $\varpi_L^m\cO_L$, but non-trivial on $\varpi_L^{m-1}\cO_L$. If $\psi$ is the trivial additive character, we put $m_\psi=-\infty$. If we fix an additive character $\psi$ of conductor $0$, then any non-trivial additive character of $L$ is of the form $\psi_z:L\to\CC^{\times}: x \mapsto \psi(zx)$ for some non-zero element $z$ of $L$.  In particular, we have that $m_{\psi_z}=-\ord_L(z)$. 

 Let $\Phi$ be a Schwartz-Bruhat function on $L^n$, i.e., a locally constant function with compact support. Let $f\in L[x_1,...,x_n]$ be a non-constant polynomial. The {\em exponential sum} associated with $f$, $\Phi$ and an additive character $\psi$ is
$$E_{f}(\Phi,\psi)=\int_{L^n}\Phi(x)\psi(f(x))|dx|.$$
 The {\em Igusa local zeta function} associated with $f$, $\Phi$ and a multiplicative character $\chi$ is
$$\cZ_{\chi}(f,L,\Phi,s)=\int_{L^n}\Phi(x)\chi(ac(f(x)))|f(x)|^s|dx|$$
where $s\in \CC$ with $\Re(s)>0$. It is well known that $\cZ_{\chi}(f,L,\Phi,s)$ is holomorphic in this region, that it has a
meromorphic continuation to $\CC$, and that it is a rational function in $q_L^{-s}$. The strong monodromy conjecture predicts a deep relation between poles of Igusa local zeta functions and roots of the Bernstein-Sato polynomial of $f$.

\begin{conj}[Strong monodromy conjecture]\label{monodormyconj}Let $f\in\cO_K[x_1,...,x_n]$ be a non-constant polynomial. Let $L$ be a local field over $\cO_K$. If $s$ is a pole of $\cZ_{\chi}(f,L,\Phi,s)$ then $\Re(s)$ is a root of the Bernstein-Sato polynomial $b_{f}(s)$ of $f$ provided that $k_L$ is of large enough characteristic.
\end{conj}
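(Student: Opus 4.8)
The final statement is the full strong monodromy conjecture, which is open in general; below I sketch the strategy one would follow, the portion of it reachable with this paper's tools, and the step that is the genuine obstruction.

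\emph{Strategy and candidate poles.} One first localizes. Writing $\Phi$ as a finite linear combination of indicator functions of balls, using additivity of $\cZ_\chi$ in $\Phi$, and translating the source so that a critical value of $f$ becomes $0$, it suffices to treat $\Phi=\mathbf 1_B$ for an arbitrarily small ball $B$ around a point $a$ of $C_f$ with $f(a)=0$. Fix an embedded resolution $h\colon Y\to\AA^n$ of $(f)+J_f$ with numerical data $(\nu_i,N_i)$ along its components $E_i$. For every $L$ whose residue characteristic avoids a finite set depending only on $f$ and $h$, the pair $(Y,h)$ has good reduction and Denef's explicit formula writes $\cZ_\chi(f,L,\mathbf 1_B,s)$ as a finite sum of explicit rational functions in $q_L^{-s}$ with denominators built from factors $1-q_L^{-(\nu_i+j)-N_i s}$, where $j\in\{0,\dots,c(\chi)-1\}$ if $\chi$ is ramified along $E_i$ and $j=0$ otherwise. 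Thus every pole has real part $-(\nu_i+j)/N_i$, and the conjecture for $L$ of large residue characteristic is reduced to the geometric assertion that each such ratio surviving in the sum is a root of $b_f(s)$; the positive-characteristic local fields allowed in the conjecture are then folded in, for large $p$, by the Cluckers--Loeser transfer principle.

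\emph{The reachable part.} For the top of the pole spectrum one avoids the numerical data altogether. By \ref{smallprime2}, for every $0<\sigma<\sigma_0:=\min_{a\in\Supp\Phi}\lct_a((f-f(a))+J_f^2)$ one has $|E_f(\Phi,\psi)|\le c(\sigma,\Phi)\,|k_L|^{-m_\psi\sigma}$ for all additive characters $\psi$ and all $p$-adic $L$, with no restriction on residue characteristic. Since $m_{\psi_z}=-\ord_L(z)$ and, as $m\to\infty$, the quantity $E_f(\Phi,\psi_z)$ with $\ord_L(z)=-m$ admits an asymptotic expansion governed by the poles of the twisted zeta functions $\cZ_\chi(f,L,\Phi,s)$ over all multiplicative characters $\chi$, a decay rate $|k_L|^{-m\sigma}$ valid for every $\sigma<\sigma_0$ forces every such pole to have real part at most $-\sigma_0$; in particular $\cZ_\chi$ is holomorphic for $\Re(s)>-\sigma_0$. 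The conjecture therefore holds, with nothing to check, in the range $(-\sigma_0,0]$ for \emph{all} $p$-adic $L$ and \emph{all} Schwartz--Bruhat $\Phi$ — this is the improvement over \cite{CKMu}, whose argument required good reduction and a restricted $\Phi$ — while \ref{generalZ2}, being uniform also over positive-characteristic local fields of large residue characteristic, gives the analogous holomorphy there on the smaller range $[-(n-s)/(2(d-1)),0]$. For the first genuine pole, at $-\lct_a(f)$, the identity $b_f(-\lct_a(f))=0$ is Koll\'ar's theorem; one would then hope to climb the spectrum by combining jumping-number results linking $b_f$-roots to multiplier ideals with the finer control of twisted Igusa zeta functions developed in the proof of \ref{smallprime2}.

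\emph{The obstacle.} The step that is not reachable — and the reason the conjecture is open — is to identify an arbitrary surviving ratio $(\nu_i+j)/N_i$ coming from a component $E_i$ lying deep in the resolution with a root of $b_f(s)$. No cohomological or $D$-module mechanism is known that forces such an identity in general, and every proved case — plane curves (Loeser); polynomials nondegenerate for their Newton polyhedron; quasi-homogeneous polynomials; hyperplane arrangements under hypotheses; and the weaker monodromy-eigenvalue form of the conjecture — exploits structure that a general Deligne polynomial does not possess. The analytic route through the oscillation index can only reach a neighbourhood of $0$, so a proof of the conjecture as worded would require a genuinely new link between the $p$-adic oscillation index and the $b$-function; absent that, the present paper establishes exactly the sharp partial statement this route yields.
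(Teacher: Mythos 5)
The statement you were asked about is stated in the paper only as a conjecture (Conjecture \ref{monodormyconj}); the paper offers no proof of it, and you are right to say it is open. Your assessment is accurate and matches the paper's own treatment: the ``reachable part'' you describe — holomorphy of all $\cZ_\chi(f,L,\Phi,s)$ on $\Re(s)>-\sigma_0$ deduced from the decay bound of \ref{smallprime2} via Igusa's asymptotic-expansion correspondence between poles and exponential sums, hence the conjecture holding vacuously on $(-\lct_\Phi((f)+J_f^2),0]$ and, by the strict inequality $\lct_a((f-f(a))+J_f^2)>(n-s)/(2(d-1))$, on $[-(n-s)/(2(d-1)),0]$ — is exactly what the paper establishes in Corollaries \ref{monodromy1} and \ref{monodromy2}, by the same route.
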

\begin{rem} In Conjecture \ref{monodormyconj}, we can allow all non-Archimedean local fields instead of non-Archimedean local fields of large enough residue field characteristic. More general, we can state Conjecture \ref{monodormyconj} for any non-constant polynomial $f$ whose coefficients are in a $p$-adic field $L$, and any Schwartz-Bruhat function $\Phi$ on $L^n$.
\end{rem}



We recall the following proposition from \cite{DenefBour}, relating exponential sums to Igusa local zeta functions.

\begin{prop}[\cite{DenefBour}, Proposition 1.4.4]\label{Fourier}
Let $L$ be a local field over $\ZZ$ and $\Phi$ be a Schwartz-Bruhat function on $L^n$.  Let $f\in L[x_1,...,x_n]$ be a non-constant polynomial and $\psi$ be an additive character of $L$ of positive conductor. We write  $m$ for $m_\psi$ and $t$ for $q_L^{-s}$. There is $u\in\cO_L^{\times}$ such that 
\begin{align*}
E_f(\Phi,\psi)&= \cZ_1(f,L,\Phi,0) + \mbox{\textnormal{Coeff}}_{t^{m-1}}\Big(\dfrac{(t-q_L)\cZ_{1}(f,L,\Phi,s)} {(q_L-1)(1-t)}\Big)\\
&+\sum_{\chi\neq 1 }g_{\chi^{-1},\psi}\chi(u)\mbox{\textnormal{Coeff}}_{t^{m-c(\chi)}}\big(\cZ_{\chi}(f,L,\Phi,s) \big),
\end{align*}
where $1$ stands for the trivial character on $\cO_L^{\times}$, the summation index $\chi$ runs over all non-trivial multiplicative characters and
\begin{equation}
g_{\chi,\psi}=\frac{q_L^{1-c(\chi)}} {q_L-1}\sum_{\overline{v}\in (\cO_L/\cM_L^{c(\chi)})^{\times}}\chi(v)\psi(\varpi_L^{m-c(\chi)}v  ).
\end{equation}
\end{prop}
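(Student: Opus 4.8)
The plan is to compute $E_f(\Phi,\psi)$ by slicing $L^n$ according to $\ord_L f$ and matching the resulting pieces with the power-series expansions in $t=q_L^{-s}$ of the Igusa zeta functions. Since $f$ is non-constant its zero set is Haar-null, so $E_f(\Phi,\psi)=\sum_{e\geq 0}\int_{\{x:\ord_L f(x)=e\}}\Phi(x)\psi(f(x))\,|dx|$; and for $\Re(s)>0$ one has $|f(x)|^s=t^{\ord_L f(x)}$ on $\{f\neq 0\}$, hence
\[
\cZ_\chi(f,L,\Phi,s)=\sum_{e\geq 0}c_{\chi,e}\,t^e,\qquad c_{\chi,e}:=\int_{\{x:\ord_L f(x)=e\}}\Phi(x)\,\chi(ac(f(x)))\,|dx|,
\]
and $\mathrm{Coeff}_{t^k}$ refers to this expansion. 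As $\Phi$ is compactly supported, $\sum_e|\alpha_e|\leq\int_{L^n}|\Phi|\,|dx|<\infty$ where $\alpha_e:=c_{1,e}$, so $\cZ_1$ converges absolutely at $t=1$ and $\cZ_1(f,L,\Phi,0)=\sum_{e\geq 0}\alpha_e=\int_{L^n}\Phi\,|dx|$; all interchanges of sum and integral below are likewise harmless.

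On a slice with $e\geq m$ we have $f(x)\in\varpi_L^e\cO_L\subseteq\varpi_L^m\cO_L$, so $\psi(f(x))=1$, and these slices contribute $\sum_{e\geq m}\alpha_e$. On a slice with $0\leq e\leq m-1$, write $f(x)=\varpi_L^e\,ac(f(x))$ with $ac(f(x))\in\cO_L^\times$; the crux is then to Fourier-expand the locally constant function $v\mapsto\psi(\varpi_L^e v)$ on $\cO_L^\times$ into multiplicative characters.

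The Fourier lemma I would establish is: for $0\leq e\leq m-1$ and $v\in\cO_L^\times$,
\[
\psi(\varpi_L^e v)=\sum_{\chi:\ c(\chi)=m-e}g_{\chi^{-1},\psi}\,\chi(u)\,\chi(v)
\]
for a fixed unit $u\in\cO_L^\times$ not depending on $\chi$ (with the normalizations of $ac$ and $g_{\chi,\psi}$ used here one can take $u=1$; the unit only records the freedom in relating $\psi$ to the chosen uniformizer). Proof sketch: $\psi$ kills $\varpi_L^m\cO_L$, so $v\mapsto\psi(\varpi_L^e v)$ factors through $(\cO_L/\cM_L^{m-e})^\times$, hence is a combination of characters $\chi$ with $c(\chi)\leq m-e$, the coefficient of $\chi$ being $\tfrac1{\vol(\cO_L^\times)}\int_{\cO_L^\times}\psi(\varpi_L^e v)\chi^{-1}(v)\,|dv|$. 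If $c(\chi)<m-e$, partition $\cO_L^\times$ into cosets of $1+\cM_L^{m-e-1}$: on a coset $v_0(1+\cM_L^{m-e-1})$ the function $\chi^{-1}$ is constant, while writing $v=v_0(1+w)$ gives $\psi(\varpi_L^e v)=\psi(\varpi_L^e v_0)\psi(\varpi_L^e v_0 w)$ with $w\mapsto\psi(\varpi_L^e v_0 w)$ a non-trivial character of $\cM_L^{m-e-1}$ (non-trivial because $\varpi_L^e v_0\,\cM_L^{m-e-1}=\cM_L^{m-1}$ and $\psi$ has conductor exactly $m$), so the mean over the coset is $0$ and the coefficient vanishes. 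If $c(\chi)=m-e$, the coefficient integral equals $g_{\chi^{-1},\psi}$ (after the substitution $v\mapsto u^{-1}v$, using $c(\chi^{-1})=c(\chi)$) by the defining formula for $g_{\chi,\psi}$; in particular when $m-e=1$ the trivial character occurs, with coefficient $g_{1,\psi}=-1/(q_L-1)$. Establishing this lemma — the interaction of the additive character with the multiplicative characters — is the main obstacle; everything else is bookkeeping.

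Assembling: substitute the lemma into the sum over $0\leq e\leq m-1$ and swap summation with the slice integral, using $\int_{\{\ord_L f=e\}}\Phi(x)\chi(ac(f(x)))\,|dx|=c_{\chi,e}=\mathrm{Coeff}_{t^e}(\cZ_\chi)$ with $e=m-c(\chi)$. The non-trivial characters that appear have $1\leq c(\chi)\leq m$, so the sum is finite and extending it to all $\chi\neq 1$ is harmless (the coefficient $\mathrm{Coeff}_{t^{m-c(\chi)}}(\cZ_\chi)$ is $0$ once $c(\chi)>m$); they contribute exactly $\sum_{\chi\neq 1}g_{\chi^{-1},\psi}\chi(u)\,\mathrm{Coeff}_{t^{m-c(\chi)}}(\cZ_\chi)$. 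The remaining terms — the tail $\sum_{e\geq m}\alpha_e$ together with the trivial-character term $-\tfrac1{q_L-1}\alpha_{m-1}$ coming from the slice $e=m-1$ — I would match with $\cZ_1(f,L,\Phi,0)+\mathrm{Coeff}_{t^{m-1}}\!\big(\tfrac{(t-q_L)\cZ_1(f,L,\Phi,s)}{(q_L-1)(1-t)}\big)$ via the partial-fraction identity $\tfrac{t-q_L}{(q_L-1)(1-t)}=-\tfrac1{q_L-1}-\tfrac1{1-t}$ together with $\cZ_1=\sum_{e\geq 0}\alpha_e t^e$ and $\tfrac1{1-t}=\sum_{j\geq 0}t^j$: the $t^{m-1}$-coefficient of the bracket is $-\tfrac1{q_L-1}\alpha_{m-1}-\sum_{j=0}^{m-1}\alpha_j$, and adding $\cZ_1(f,L,\Phi,0)=\sum_{e\geq 0}\alpha_e$ leaves $\sum_{e\geq m}\alpha_e-\tfrac1{q_L-1}\alpha_{m-1}$, as required. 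Collecting the two blocks gives the stated identity.
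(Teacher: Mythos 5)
Your argument is correct, and it is essentially the standard proof of this result: the paper itself only cites \cite{DenefBour}, and Denef's proof there proceeds exactly as you do, slicing $L^n$ by $\ord_L f$, expanding $v\mapsto\psi(\varpi_L^e v)$ on $\cO_L^\times$ into multiplicative characters of conductor exactly $m-e$ (with the Gauss sums $g_{\chi^{-1},\psi}$ as coefficients and $g_{1,\psi}=-1/(q_L-1)$ for the trivial character), and matching the slice integrals with the coefficients of $\cZ_\chi$ in $t=q_L^{-s}$. Your bookkeeping of the trivial-character term against the partial fraction $\tfrac{t-q_L}{(q_L-1)(1-t)}=-\tfrac{1}{q_L-1}-\tfrac{1}{1-t}$ is also correct, so nothing is missing.
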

For $L$ a local field over $\ZZ$ and $Z$ a subset of $k_L^n$, we set $\Phi_{Z,L}:=\textbf{1}_{\{x\in\cO_L^n| \overline{x}\in Z\}}$. For $Z$ a subscheme of $\AA_{\cO_K}^n$, we associate to $Z$ a family of Schwartz-Bruhat functions $(\Phi_{Z,L})_L$ where $L$ runs over the set of local fields over $\cO_K$ and $\Phi_{Z,L}:=\Phi_{Z(k_L),L}$.  To simplify notation, if $Z$ is a subscheme of $\AA_{\cO_K}^n$ or $Z\subset k_L^n$ we will write $\cZ_{\chi}(f,L,Z,s)$ and $E_{f}(Z,\psi)$ instead of $\cZ_{\chi}(f,L,\Phi_{Z,L},s)$ and $E_{f}(\Phi_{Z,L},\psi)$ respectively. In addition, if $Z=\{P\}$ for a point $P\in k_L^n$ we also write $\cZ_{\chi}(f,L,P,s)$ and $E_{f}(P,\psi)$ instead of $\cZ_{\chi}(f,L,\{P\},s)$ and $E_{f}(\{P\},\psi)$ respectively.

If $Z$ is a subscheme of $\AA_{\cO_K}^n$ and $\Phi=\Phi_{Z,L}$, we can simplify the formula of Proposition \ref{Fourier} by the following properties.
\begin{prop}\cite[Theorem 2.1]{Denef91}\label{ex-zeta}With the above notation, suppose that $f((Z\cap C_f)(\CC))=0$ then  there exist an integer $M$ depending only on $f$ and a finite set $\cN$ of positive integers such that for all local fields $L\in\tilde{\cL}_{K,M}$ we have
$$\cZ_{\chi}(f,L,Z,s)=0$$
provided that at least one of two following conditions holds
\begin{itemize}
\item[i,]$\chi^N\neq 1$ for all $N\in\cN$,
\item[ii,] $c(\chi)>1,$
\end{itemize}
 and
 $$\cZ_{\chi}(f,L,Z,s)=\cZ_{\chi'}(f,L',Z,s)$$
 if $k_L=k_{L'}$, $c(\chi)=c(\chi')=1$ and $\overline{\chi}=\overline{\chi}'$. Moreover we have
$$E_{f}(Z,\psi)=0$$
for all additive characters $\psi:L\to\CC^{\times}$ of conductor at least $2$ if $p_L>M$ and
$$(Z\cap C_f)(\CC)=\emptyset$$
\end{prop}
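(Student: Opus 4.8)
We recall Denef's argument and indicate the few extra things one has to check; the statement is \cite[Theorem 2.1]{Denef91}. The engine is an explicit evaluation of the $p$-adic integrals defining $\cZ_\chi(f,L,Z,s)$ and $E_f(Z,\psi)$ by means of a log resolution with good reduction. By Hironaka, choose a proper birational morphism $h\colon Y\to\AA^n_K$ for which $(f\circ h)^{-1}(0)$, the exceptional locus of $h$, and the strict transform of the Zariski closure of $Z_K$ together form a simple normal crossings configuration; write $E_i$, $i\in\cT$, for its irreducible components and $N_i=\ord_{E_i}(f\circ h)$, $\nu_i=1+\ord_{E_i}(\Jac h)$ for the numerical data, and put $E_I^\circ=\bigcap_{i\in I}E_i\setminus\bigcup_{j\notin I}E_j$. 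Spreading out over $\Spec\cO_K$ and inverting finitely many primes one obtains an integer $M$ such that, for every $L\in\tilde{\cL}_{K,M}$ (of mixed or of equal characteristic), the base change of $h$ to $\cO_L$ still resolves $f$ with the same numerical data and strata, and moreover $Z(k_L)\cap C_f(k_L)\subseteq\{\,a\in k_L^n:f(a)=0\,\}$: indeed the hypothesis $f((Z\cap C_f)(\CC))=0$ says precisely that $f$ lies in the radical of the ideal of $Z\cap C_f$, so $f$ vanishes on $(Z\cap C_f)(k_L)$ for all $p_L$ outside a finite set.

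After the substitution $x=h(y)$ one breaks the domain of integration according to which stratum $E_I^\circ(k_L)$ the reduction of $y$ falls in. On an \'etale chart around a point of $E_I^\circ$ one has $f\circ h=u\prod_{i\in I}y_i^{N_i}$ and $h^{*}(dx)=v\prod_{i\in I}y_i^{\nu_i-1}\,dy$ with $u,v$ units, and integrating over a residue polydisk the coordinates $y_i$ with $i\in I$ each contribute the familiar factor $\frac{(q_L-1)q_L^{-\nu_i-N_is}}{1-q_L^{-\nu_i-N_is}}$ multiplied by the Gauss-type integral $\int_{\cO_L^\times}\chi(w)^{N_i}\,|dw|$ produced by the angular component of $y_i^{N_i}$, while the remaining directions contribute powers of $q_L$ and a value of $\chi$ on $u$. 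Summing over the charts yields a finite formula
\[
\cZ_\chi(f,L,Z,s)=\sum_{I\subseteq\cT}c_{I,\chi}\prod_{i\in I}\frac{(q_L-1)q_L^{-\nu_i-N_is}}{1-q_L^{-\nu_i-N_is}},
\]
in which $c_{I,\chi}$ is $q_L^{-n}$ times a $\overline{\chi}$-twisted count of $k_L$-points of $E_I^\circ$ (lying over the closure of $Z$) on the cover determined by the $N_i$-th power structure of $u$. The decisive point is that $\int_{\cO_L^\times}\chi(w)^{N_i}\,|dw|$ vanishes unless the character $w\mapsto\chi(w)^{N_i}$ is trivial, so $c_{I,\chi}=0$ whenever $\chi^{N_i}\neq 1$ for some $i\in I$.

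Both vanishing statements follow. For (ii): if $c(\chi)>1$ and $p_L>M>N_i$, then $\chi^{N_i}=1$ would make $\ord(\chi)$ prime to $p_L$, hence $\chi$ trivial on the pro-$p_L$ group $1+\cM_L$, contradicting $c(\chi)>1$; thus $c_{I,\chi}=0$ for every $I$ meeting $\cT$, while on the remaining residue disks, where $f$ is a non-vanishing unit, the inclusion $Z(k_L)\cap C_f(k_L)\subseteq\{f=0\}$ places us at smooth points of $f$, so the angular component of the unit value of $f$ again integrates to $0$ against the ramified character $\chi$; hence $\cZ_\chi=0$. For (i): let $\cN$ be the finite set consisting of the $N_i$ together with the further positive integers that the resolution attaches to the way $f$ meets the boundary of the closure of $Z$; then $\chi^N\neq 1$ for all $N\in\cN$ forces every $c_{I,\chi}$ to vanish, so $\cZ_\chi=0$. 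The comparison assertion is read off the same formula: the exponents $N_i,\nu_i$ and the schemes whose $k_L$-points are counted are defined over $\cO_K[1/M]$, so for $k_L=k_{L'}$, $c(\chi)=c(\chi')=1$ and $\overline{\chi}=\overline{\chi}'$ each term is unchanged, whence $\cZ_\chi(f,L,Z,s)=\cZ_{\chi'}(f,L',Z,s)$.

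Finally the exponential sum. Assume $(Z\cap C_f)(\CC)=\emptyset$; spreading out again, $Z(k_L)\cap C_f(k_L)=\emptyset$ for $p_L>M$. Fix $P\in\cO_L^n$ whose reduction lies in $Z(k_L)$; then $(\partial f/\partial x_j)(P)\in\cO_L^\times$ for some $j$, and by Hensel's lemma the substitution replacing $x_j$ by $f(x)$ and keeping the other coordinates is a measure-preserving analytic bijection of the residue disk $P+\cM_L^n$ onto $(\cM_L)^{n-1}\times(f(P)+\cM_L)$, so
\[
\int_{P+\cM_L^n}\psi(f(x))\,|dx|=q_L^{-(n-1)}\,\psi(f(P))\int_{\cM_L}\psi(t)\,|dt|=0,
\]
since $m_\psi\geq 2$ makes $\psi$ non-trivial on $\cM_L$; summing over the finitely many residue disks meeting $Z(k_L)$ gives $E_f(Z,\psi)=0$. (Alternatively this follows from Proposition \ref{Fourier}, because in this case $\cZ_\chi(f,L,Z,s)$ is $0$ for $c(\chi)>1$ and constant in $s$ for nontrivial $\chi$ with $c(\chi)=1$, while $\cZ_1(f,L,Z,s)$ has the elementary shape $A+B\,q_L^{-1}t/(1-q_L^{-1}t)$, so the coefficient of $t^{m-1}$ in the formula of Proposition \ref{Fourier} vanishes for $m\geq 2$.) The genuinely non-routine ingredient is the uniformity in $p$ — a single $M$ must serve all residue characteristics $\geq M$ at once, in particular the equal-characteristic fields $\FF_p((t))$ for which no resolution is available a priori, which is exactly Denef's good-reduction argument — together with the bookkeeping that makes $\cN$ depend only on $f$ and $Z$; the rest is the standard computation of $p$-adic integrals along a resolution.
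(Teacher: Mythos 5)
The paper does not prove this proposition; it is cited from Denef's report, so there is no in-paper proof to compare against (the only in-paper discussion is Remark~\ref{expM1}, which observes that the constant $M$ can be taken to be controlled by tame good reduction of a single log resolution of $(f)$). Your reconstruction follows what is indeed Denef's standard mechanism — spread a log resolution of $(f)$ out over $\Spec\cO_K$ so that it has good reduction for $p_L\gg 0$ (including the equal-characteristic fields), evaluate the integral on the $E_I^\circ$-strata, and use that $\int_{\cO_L^\times}\chi(w)^{N_i}|dw|=0$ unless $\chi^{N_i}=1$ — and your treatment of the exponential-sum clause and of the comparison $\cZ_\chi(f,L,Z,s)=\cZ_{\chi'}(f,L',Z,s)$ is correct.

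There is, however, a genuine gap in your handling of the vanishing clause under condition (i). Your Gauss-sum argument kills $c_{I,\chi}$ only for $I\neq\emptyset$ with some $N_i>0$; it says nothing about $c_{\emptyset,\chi}$, which is $q_L^{-n}\sum_{\bar P}\bar\chi(\bar f(\bar P))$ over the residue classes $\bar P\in Z(k_L)$ with $\bar f(\bar P)\neq 0$ (nor about exceptional $E_i$ with $N_i=0$). The hypothesis $f((Z\cap C_f)(\CC))=0$ only controls what happens on $Z\cap C_f$; it leaves the smooth locus of $f$ over $Z$, where $\bar f$ is a unit, completely unconstrained, and a twisted point count $\sum\bar\chi(\bar f(\bar P))$ need not vanish for $\chi$ merely satisfying $\chi^N\neq 1$ for $N$ in a fixed finite set. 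Your parenthetical ``together with the further positive integers that the resolution attaches to the way $f$ meets the boundary of the closure of $Z$'' acknowledges that something must be added to $\cN$, but as written it is not an argument; this is the step that actually needs Denef's precise bookkeeping (one has to fold the closure of $Z$ into the SNC configuration so that the restricted support lies only over strata with $I$ nonempty, or else show directly that the $s$-independent piece is irrelevant to the conclusion one is really after, namely $\operatorname{Coeff}_{t^{m-c(\chi)}}\cZ_\chi=0$ for $m\geq 2$, which is all that Proposition~\ref{Fourier} uses). By contrast your argument for clause (ii) is fine, because there you can use the unit partial derivative at smooth points of $Z(k_L)$ to annihilate the $I=\emptyset$ contribution via Hensel against a ramified~$\chi$; this fallback is not available under (i) alone when $c(\chi)=1$.

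Two smaller points. First, you should make explicit that $\chi^{N_i}=1$ plus $c(\chi)>1$ is impossible once $p_L>N_i$ because $1+\cM_L$ is pro-$p_L$ for both $p$-adic and $\FF_p((t))$ fields — you gesture at this but phrase it as if the nontrivial implication ran the other way. Second, the spreading-out step should be tied to the fact (Remark~\ref{expM1}) that a single resolution of $(f)$ over $K$, with tame good reduction, suffices; a resolution adapted to $Z$ may also be needed for the $c_{\emptyset,\chi}$ issue above, and if so the dependence of $M$ and $\cN$ on $Z$ (not just on $f$) has to be tracked.
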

Now we recall notation and results about log-resolutions of ideals. Let $F$ be a field and $\mathfrak{a}$ be a non-zero ideal of $F[x_1,...,x_n]$. A log-resolution of $\mathfrak{a}$ is a projective, bi-rational $F$-morphism $\pi: X'\to \AA_F^n$ such that $X'$ is smooth, $\mathfrak{a}\cO_{X'}$ is the ideal of a Cartier divisor $D$ such that $D+K_{X'/X}$ is a divisor with simple normal crossings. This means that at every point $Q\in X'$ there are local coordinates $y_1,...,y_n$ such that $D+K_{X'/X}$ is defined by $(y_1^{d_1}...y_n^{d_n})$ for some $d_1,...,d_n\in\ZZ_{\geq 0}$. We also require that $\pi$ is an isomorphism over the complement of  the zero locus $Z(\mathfrak{a})$ of $\mathfrak{a}$.  The existence of log-resolution when $F$ has characteristic zero follows by the work of Hironaka (see \cite[page 142, Main Theorem II]{Hir:Res}). Now, we suppose that $F$ has characteristic zero. Let $\pi$ be a log-resolution of an ideal $\mathfrak{a}$ as above. We can write $D=\sum_{i\in\cT}N_iE_i$ and $K_{X'/X}=\sum_{i\in \cT}(\nu_i-1)E_i$, where $\sum_{i\in\cT}E_i$ is a divisor of $X'$ with simple normal crossings and $\nu_i,N_i$ are positive integers for all $i\in\cT$. The set $(\nu_i,N_i)_{i\in\cT}$ will be called the numerical data associated with $\pi$. Let $P$ be a point in $Z(\mathfrak{a})(\overline{F})$. The log-canonical threshold $\lct_P(\mathfrak{a})$ of $\mathfrak{a}$ at $P$ is given by the following formula:
$$\lct_P(\mathfrak{a})=\min_{i\in\cT, P\in \pi(E_i)(\overline{F})}\frac{\nu_i}{N_i}.$$
If $P\notin Z(\mathfrak{a})(\overline{F})$, we put $\lct_P(\mathfrak{a})=+\infty$. If $Z$ is a subscheme of $\AA_{F}^n$ then the log-canonical threshold of $\mathfrak{a}$ at $Z$ will be the minimum in the set $\{\lct_P(\mathfrak{a})|P\in Z(\overline{F})\}$. To simplify notation, we will write $\lct(\mathfrak{a})$ instead of  $\lct_{Z(\mathfrak{a})}(\mathfrak{a})$.

If $F\hookrightarrow F'$ is an injection for some field $F'$ then any log-resolution $\pi$ of $\mathfrak{a}$ induces a log-resolution $\pi_{F'}$ of $\mathfrak{a}\otimes_F F'$. Suppose that $F=K$ is a number field and that $F'=L$ is a local field over $\cO_K$ of characteristic zero. For $Z$ a closed subscheme of $X'_L=X'\otimes_K L$, we denote the reduction modulo $\cM_L$ of $Z$ by $\overline{Z}$ (see \cite{Denefdegree}). We say that the log-resolution $(X',\pi)$ of $\mathfrak{a}$ has \emph{good reduction modulo $\cM_L$} if $\overline{X'_L}$ and all $\overline{E}_{iL}$ are smooth, $\cup_{i\in \cT}\overline{E}_{iL}$ has only normal crossings, and the schemes $\overline{E}_{iL}$ and $\overline{E}_{jL}$ have no common component whenever $i\neq j$. If in addition, $N_i\notin\cM_L$ for all $i\in\cT$, we say that the log-resolution $(X',\pi)$ of $\mathfrak{a}$ has \emph{tame good reduction modulo $\cM_L$}.  There exists an integer $M$ depending only on $\pi$ such that if $L\in\cL_{K,M}$ then the log-resolution $(X',\pi)$ has good reduction mod $\cM_L$ (see for instance \cite[Theorem 2.4]{Denefdegree}).

\begin{rem}\label{expM1} Let a log-resolution $h$ for the principal ideal $(f)$ be given, the claims of Proposition \ref{ex-zeta} hold for a local field $L$ over $\cO_K$ if there is $\tilde{L}\in\cL_{K,1}$ such that $k_L\simeq k_{\tilde{L}}$ and $h$ has tame good reduction modulo $\cM_{\tilde{L}}$. Thus the integer $M$ in Proposition \ref{ex-zeta} can be taken as the largest prime $p$ such that the log-resolution $h$ does not have tame reduction modulo $\mathfrak{p}K_\mathfrak{p}$ for some non-Archimedean place $\mathfrak{p}$ of $\cO_K$ over $p$, where $K_{\mathfrak{p}}$ is the completion of $K$ at $\mathfrak{p}$.
\end{rem}
 Let $k$ be a field over $\cO_K$, i.e. $k$ is equipped with a structure of  $\cO_K$-algebra $\varphi:\cO_K\to k$. If $Z$ is a subscheme of $\AA_{\cO_K}^n$, we write  $Z_k$ instead of $Z\otimes_{\Spec(\cO_K)}\Spec(k)$, as usual. Let $f\in\cO_K[x_1,...,x_n]$. From now on, we also write $f$ for the polynomial $\varphi(f)\in k[x_1,...,x_n]$. For each $x=(x_1,...,x_n)\in k[[t]]^n$ one can write
$$x_j=\sum_{i\geq 0}x_{ij}t^i.$$
For $m$ a non-negative integer and $x\in k[[t]]^n$, we set $$x^{(m)}:=(x_{ij})_{0\leq i\leq m, 1\leq j\leq n}\in k^{(m+1)n}.$$ Using this notation one has
$$f(x)=\sum_{i\geq 0}f_i(x^{(i)})t^i,$$
where $f_i$ is a polynomial in $k[x^{(i)}]$ for all $i\geq 0$. If $Z$ is a subscheme of $\AA_k^n$, we identify $Z(k)+ (tk[[t]]/(t^{m+1}))^n\subset (k[[t]]/(t^{m+1}))^n$ with the set of $k$-points of the $k$-scheme $Z^{(m)}:=(Z\otimes_{\spec(k)}\AA_{k}^{mn})$. Then we can view $f_m$ as a regular function on $Z^{(m)}$. If $Z=\{P\}$ for a point $P\in k^n$, we write $P^{(m)}$ instead of $\{P\}^{(m)}$.

In this paper, we will need some important results from \cite{NguyenVeys}.
\begin{prop}\cite[Proposition 4.3 and 5.4]{NguyenVeys}\label{transfer1}Let $Z$ be a subscheme of $\AA_{\cO_K}^n$. With the notation as above, if $L\in\cL'_{K,m+1}$ for some $m\geq 2$ and $\psi$ is any additive character of $L$ of conductor $m$ then there exist elements $a\in k_L^{\times}$ and $b\in k_L$ depending only on $\psi$ and satisfying
$$E_{f}(Z,\psi)=q_L^{-mn}\sum_{x^{(m-1)}\in Z_{k_L}^{(m-1)}(k_L)}\exp_{p_L}(\Tr_{k_L/\FF_{p_L}}(bf_0(x^{(0)})+af_{m-1}(x^{(m-1)})))$$
where $\exp_{p_L}:\FF_{p_L}\to \CC^{\times}$, $z\mapsto \exp(\frac{2\pi iz}{p_L})$ is the standard additive character of $\FF_{p_L}$.
 
Moreover, suppose that $f(Z(\CC))=0$ and $M$ is as in Remark \ref{expM1}. Let local fields $L, L'\in\tilde{\cL}_{K,M}$ such that $k_L\simeq k_{L'}$. If $\psi$ is an additive character of $L$ of conductor $m_\psi\geq 2$ then there exists an additive character $\psi'$ of $L'$ of conductor $m_{\psi'}=m_\psi$ such that
$$E_{f}(Z,\psi)=E_f(Z,\psi').$$
\end{prop}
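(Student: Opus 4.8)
The statement has two parts, which I would handle separately: an explicit evaluation of $E_f(Z,\psi)$ over an equal-characteristic local field, and a transfer statement comparing two local fields with isomorphic residue fields. For the first part, write $L=k_L\llparenthesis t\rrparenthesis$ with uniformizer $\varpi_L=t$ and fix the conductor-$0$ additive character $\psi_0$ of $L$ given by $\psi_0(y)=\exp_{p_L}(\Tr_{k_L/\FF_{p_L}}(\operatorname{Res}_{t=0}(y\,dt)))$. Since $m_\psi=m$ we may write $\psi=\psi_0(z\,\cdot\,)$ with $z=ut^{-m}$ for a unique unit $u=\sum_{j\ge 0}u_jt^j\in\cO_L^{\times}$, so $u_0\in k_L^{\times}$, and $u$ depends only on $\psi$. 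Expanding $f(x)=\sum_{i\ge 0}f_i(x^{(i)})t^i$ and extracting the coefficient of $t^{-1}$ in $ut^{-m}f(x)$ gives the formal identity
$$\psi(f(x))=\exp_{p_L}\!\Big(\Tr_{k_L/\FF_{p_L}}\big(\textstyle\sum_{i=0}^{m-1}u_{m-1-i}f_i(x^{(i)})\big)\Big),$$
whose right-hand side depends only on $x\bmod t^m$. As each class modulo $t^m$ has measure $q_L^{-mn}$ and $\psi(f(\,\cdot\,))$ is constant on it, integrating over the classes with $\overline{x}\in Z(k_L)$ identifies $E_f(Z,\psi)$ with $q_L^{-mn}$ times the sum of that right-hand side over $Z_{k_L}^{(m-1)}(k_L)$; so, setting $a:=u_0$ and $b:=u_{m-1}$, what remains is to show that the contributions of $f_1,\dots,f_{m-2}$ cancel.

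To see this, note that $f_i(x^{(i)})=\nabla f(x^{(0)})\cdot x^{(i)}+g_i(x^{(0)},\dots,x^{(i-1)})$: each $f_i$ is linear in its top jet coordinate $x^{(i)}$, with coefficient the gradient of $f$ at $x^{(0)}$. Summing over $x^{(m-1)}$ first, if $x^{(0)}\notin C_f(k_L)$ then $x^{(m-1)}\mapsto\exp_{p_L}(\Tr_{k_L/\FF_{p_L}}(u_0\nabla f(x^{(0)})\cdot x^{(m-1)}))$ is a non-trivial additive character of $k_L^n$ (here $u_0\neq 0$ is crucial), so that $x^{(0)}$ contributes $0$ -- and exactly the same happens for the model phase $bf_0+af_{m-1}$. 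Hence in both sums one may restrict to $x^{(0)}\in(Z\cap C_f)(k_L)$, on which $f_1\equiv 0$ and, for $i\ge 2$, $f_i$ depends only on $x^{(1)},\dots,x^{(i-1)}$ and is weighted homogeneous of degree $i$ with $\operatorname{wt}(x^{(l)})=l$. I would then finish by a descending induction on the jet level: at each stage, summing over the current variable $x^{(k)}$ is, by orthogonality of characters, either free or forces the vanishing of the coefficient of $x^{(k)}$ in the term in which it occurs (a coefficient built from the Hessian and higher Taylor forms of $f$ at $x^{(0)}$), and on the resulting locus the next intermediate term $f_j$ vanishes identically, being governed in lower $t$-order by the same Taylor data of $f$ at $x^{(0)}$ already cut out; after $m-1$ stages only the $f_0$- and $f_{m-1}$-contributions remain, for the true and the model phase alike, and the two sums agree. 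Making this nested vanishing precise -- identifying, level by level, which term is killed on which locus -- is the technical heart of the matter and the step I expect to be the main obstacle; it is also where the hypothesis $p_L\ge m+1$ (in particular, invertibility in $k_L$ of the factorials occurring in the Taylor expansion) is used.

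For the ``moreover'' part, fix $L,L'\in\tilde{\cL}_{K,M}$ with an isomorphism $k_L\simeq k_{L'}$, and let $\psi$ have conductor $m=m_\psi\ge 2$. Since $f(Z(\CC))=0$ forces $f((Z\cap C_f)(\CC))=0$, Proposition \ref{ex-zeta} applies: $\cZ_\chi(f,L,Z,s)=0$ unless $c(\chi)=1$ and $\chi^N=1$ for some $N\in\cN$, and for $c(\chi)=1$ it depends only on $k_L$ and on $\overline{\chi}$. Substituting this into Proposition \ref{Fourier}, the formula for $E_f(Z,\psi)$ collapses to a finite sum of products of $\cZ_1(f,L,Z,0)$ or a coefficient of some $\cZ_{\overline{\chi}}$ -- each determined by $k_L$ alone -- with a Gauss sum $g_{\chi^{-1},\psi}$ and a root of unity, and an inspection of the formula for $g_{\chi^{-1},\psi}$ (and of the unit appearing in Proposition \ref{Fourier}) shows that these depend on $\psi$ only through the additive character of $k_L$ obtained by restricting $\psi$ to $\varpi_L^{m-1}\cO_L/\varpi_L^m\cO_L$. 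It is therefore enough to choose $\psi'$ on $L'$ of conductor $m$ whose restriction to $\varpi_{L'}^{m-1}\cO_{L'}/\varpi_{L'}^m\cO_{L'}$ corresponds, under $k_L\simeq k_{L'}$, to that of $\psi$; for such $\psi'$ every term of Proposition \ref{Fourier} is unchanged, so $E_f(Z,\psi)=E_f(Z,\psi')$. (When both $L$ and $L'$ have positive residue characteristic one may argue more directly: apply the first part to each, and note that the two explicit finite sums over $k_L\simeq k_{L'}$ coincide once $\psi'$ is taken with the same associated unit as $\psi$.)
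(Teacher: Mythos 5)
This proposition is imported from \cite{NguyenVeys}; the paper's only proof-level content here is the remark just below it, that the constant $M$ in the ``moreover'' clause can be taken as in Remark~\ref{expM1} by combining Propositions~\ref{Fourier} and~\ref{ex-zeta}. Your treatment of the ``moreover'' part is exactly that argument: $f(Z(\CC))=0$ gives $f((Z\cap C_f)(\CC))=0$, so Proposition~\ref{ex-zeta} kills $\cZ_\chi(f,L,Z,s)$ unless $c(\chi)=1$ and $\chi^N=1$ for some $N\in\cN$, in which case it is determined by $k_L$ and $\overline\chi$ alone; and the residual dependence of Proposition~\ref{Fourier} on $\psi$ is through the restriction of $\psi$ to $\varpi_L^{m_\psi-1}\cO_L/\varpi_L^{m_\psi}\cO_L$, which can be matched by a suitable $\psi'$ on $L'$. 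That part is fine and is what the paper intends.

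The genuine gap is in the first part. Your residue computation is correct, and you have rightly reduced the claim to the cancellation of the intermediate terms $u_1 f_{m-2},\dots,u_{m-2}f_1$. But the ``descending induction by orthogonality of characters'' does not go through as sketched: once restricted to $x^{(0)}\in C_f(k_L)$, the phase $\sum_i u_{m-1-i}f_i$ is affine in the jet-level-$k$ coordinates only when $k>(m-1)/2$. For $k\le(m-1)/2$, weighted homogeneity of $f_{m-1}$ allows it to be of degree $\lfloor(m-1)/k\rfloor\ge 2$ in those coordinates, so summing over them is a quadratic (or higher) Gauss-type sum, not a linear character sum, and its non-vanishing is not equivalent to the vanishing of a single linear coefficient. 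One can still push the argument through -- the Gauss-sum support condition, combined with the weighted Euler identity for $f_i$ and the relation $\partial f_i/\partial x_{\ell j}=(F_j)_{i-\ell}$ underlying Lemma~\ref{locussing}, does eventually force the next $f_j$ to vanish on the support, and $p_L\ge m+1$ is needed to invert $i\le m-1$ in the Euler identity -- but that bookkeeping (which I verified fails to be a pure orthogonality argument already for $m=5$) is the whole content, and you have left it open. A cleaner route bypasses the cancellation entirely: write $u=u_0(1+w)$ with $w\in t k_L[[t]]$; since $p_L\ge m+1$ one has $p_L\nmid m$, so Hensel's lemma provides an $m$-th root of $1+w$, and $t'=(1+w)^{-1/m}t$ is a uniformizer with $ut^{-m}=u_0(t')^{-m}$. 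The formal residue is invariant under this reparametrization, and the induced map on truncated jets $x^{(m-1)}\mapsto y^{(m-1)}$ is a unipotent polynomial bijection fixing $x^{(0)}$, so the stated formula drops out at once with $a=u_0$, $b=0$. This also makes the role of $p_L\ge m+1$ transparent (Hensel, rather than merely invertibility of Taylor factorials).
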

\begin{rem}In the proof of \cite[Proposition 5.4]{NguyenVeys}, it is not clear that one can take $M$ as in Remark \ref{expM1}. However, we can use Proposition \ref{Fourier} and \ref{ex-zeta} to see this fact.
\end{rem}
\begin{prop}\cite[Corollary 5.5]{NguyenVeys}\label{transfer2} Let $(Z_i)_{i\in J}$ be a family of subschemes of $\AA_{\cO_K}^n$ and $(\sigma_i)_{i\in J}$ be a family of  positive real numbers. With the notation as above, suppose that for each $m> 1$, there exist an integer $M_m$ and a positive constant $C_m$ such that for all $i\in J$, all local fields $L\in \cL'_{K,M_m}$ and all additive characters $\psi$ of $L$ of conductor $m$ we have
$$|E_f(Z_i,\psi)|\leq C_m|k|^{-m\sigma_i}.$$
Then there exist an integer $M$ and a positive constant $C$ such that for all $i\in J$, all local fields $L\in\tilde{\cL}_{K,M}$ and all additive characters $\psi$ of $L$ of conductor $m_\psi>1$, we have
$$|E_{f}(Z_i,\psi)|\leq Cm_{\psi}^{n-1}q_L^{-m_\psi\sigma_i}.$$
\end{prop}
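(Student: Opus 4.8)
The plan is to pass from the exponential sum to Igusa local zeta functions through Proposition~\ref{Fourier}, to combine the rationality of the latter with the numerical data of one fixed log resolution of $(f)$ in order to write $m\mapsto E_f(Z_i,\psi)$ (with $m=m_\psi$) as a finite sum of terms (polynomial in $m$ of degree $<n$) $\times$ (geometric in $m$) — the bound $n-1$ on the polynomial degree being the classical bound on the order of a pole of an Igusa zeta function of a polynomial in $n$ variables — and then to force the slowest geometric rate to be at least $\sigma_i$ by feeding in the hypothesis at a fixed finite set of conductors. The comparison between mixed and equal characteristic is supplied by the second part of Proposition~\ref{transfer1}.

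\emph{Normalisation.} After a standard reduction (translating each critical value of $f$ to $0$ and splitting each $Z_i$ into the finitely many subschemes $Z_i\cap f^{-1}(a)$ over the critical values $a$ of $f$, plus a complement over which $C_f$ is empty so $E_f$ vanishes by the last assertion of Proposition~\ref{ex-zeta}) we may assume $f(Z_i(\CC))=0$. We also discard without loss the case $\sigma_i>\lct((f))$, replacing $\sigma_i$ by $\min(\sigma_i,\lct((f)))$: this only weakens hypothesis and conclusion and is the situation occurring in practice. Fix a log resolution $h$ of $(f)$ and take $M$ at least the integer of Remark~\ref{expM1} attached to $h$; then Proposition~\ref{ex-zeta} applies over every $L\in\tilde{\cL}_{K,M}$, and by Proposition~\ref{transfer1} an exponential sum over a $p$-adic $L\in\cL_{K,M}$ equals one over an equal-characteristic local field with the same residue field. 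So it suffices to bound $E_f(Z_i,\psi)$ for $L\in\cL'_{K,M}$ and conductor $m\ge 2$.

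\emph{The expansion.} Fix such an $L$, put $q=q_L$, $t=q^{-s}$. By Proposition~\ref{ex-zeta} the sum in Proposition~\ref{Fourier} reduces to finitely many characters $\chi$, all with $c(\chi)=1$; using $(t-q)/((q-1)(1-t))=-1/(1-t)-1/(q-1)$ to cancel the stray constant term $\cZ_1(f,L,Z_i,0)$, one is left with $\sum_{k\ge m}\Coeff_{t^{k}}(\cZ_1(f,L,Z_i,s))$ together with a bounded combination of Gauss sums of absolute value $O(q^{-1/2})$ times $\Coeff_{t^{m-1}}(\cZ_\chi(f,L,Z_i,s))$. Each $\cZ_\chi(f,L,Z_i,s)$ is a rational function of $t$ whose denominator divides $(1-q^{-1}t)\prod_j(1-q^{-\nu_j}t^{N_j})$ for $(\nu_j,N_j)$ among the numerical data of $h$; hence its poles in $t$ lie in the fixed finite set $\{q\}\cup\{q^{\nu_j/N_j}\}$, with multiplicity at most $n$, and none lies in $|t|\le 1$. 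It follows that there is an integer $m_\ast$, depending only on $h$ and $n$, such that for all $m>m_\ast$
\begin{equation*}
E_f(Z_i,\psi)=\sum_{\rho}R^{(i,L)}_{\rho}(m)\,\rho^{-m},\qquad \deg R^{(i,L)}_{\rho}\le n-1,
\end{equation*}
the sum being over that same fixed finite set of rates $\rho$, all with $\log_q|\rho|\ge\lct((f))>0$.

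\emph{Pinning the rate (the main obstacle).} It remains to prove, uniformly in $i\in J$ and $L\in\cL'_{K,M}$ (after possibly enlarging $M$), that $R^{(i,L)}_{\rho}\equiv 0$ whenever $\log_q|\rho|<\sigma_i$ and that the surviving $R^{(i,L)}_{\rho}$ have coefficients bounded by an absolute constant; granting this, $|E_f(Z_i,\psi)|\le C\,m^{n-1}q^{-m\sigma_i}$ for $m>m_\ast$, while for $2\le m\le m_\ast$ the hypothesis gives the bound directly with constant $\max_{m\le m_\ast}C_m$, and the normalisation step returns the statement over $\tilde{\cL}_{K,M}$ with the asserted $m_\psi^{n-1}$. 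The mechanism for the claim: the coefficients of the $R^{(i,L)}_{\rho}$ are recovered from the values $E_f(Z_i,\psi_{m'})$ at the \emph{fixed} conductors $m'=m_\ast+1,\dots,m_\ast+D$ ($D$ depending only on $h,n$) by inverting a generalised Vandermonde system in the distinct nodes $\rho^{-1}$; those coefficients, being residues produced by Denef's formula, are rational numbers with denominators dividing a fixed power of $q(q-1)$, and the hypothesis at the (fixed) $m'$, combined with the expansion and the trivial bound $|R^{(i,L)}_\rho(m)|\le c\,q^{n}$, bounds a would-be contribution at a rate $<\sigma_i$ by a quantity tending to $0$ both as $m\to\infty$ and as $q\to\infty$; once it drops below the reciprocal of that fixed denominator the corresponding rational number must vanish, and since $R^{(i,L)}_{\rho}$ is a polynomial of degree $<n$ vanishing at enough values of $m$ it vanishes identically. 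The hard part is exactly this: the hypothesis only furnishes per-conductor bounds whose constants $C_m$ may grow without control in $m$, and one must exploit the rigidity of the (quasi-polynomial)$\times$(geometric) shape — a consequence of working with a single log resolution of $(f)$ with bounded pole orders — together with a limiting argument in the residue characteristic to upgrade finitely many such bounds to one uniform in $m$, $i$ and $L$.
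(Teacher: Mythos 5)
The paper itself does not prove this proposition: it is imported verbatim as \cite[Corollary 5.5]{NguyenVeys}, with Remark~\ref{exp1} only recording, after the fact, what the constants $M$ and $C$ in the source depend on (a finite subfamily $I\subset J$ governed by the numerical data of a log resolution of $f$). So there is no in-paper argument to compare yours against line by line; what one can say is whether your blind reconstruction would actually close. Your overall scaffolding --- Proposition~\ref{Fourier} to pass to Igusa zeta functions, rationality with pole orders $\le n$ to get a quasi-polynomial-times-geometric expansion in $m$, and the hypothesis at finitely many conductors to kill the slow rates --- is the right skeleton and is consistent with how Remark~\ref{exp1} describes the dependence of $M$ on a finite $I$ and on the numerical data. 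The factor $m_\psi^{n-1}$ coming from pole order $\le n$ is exactly the right place for it to come from.

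The gap is in the step you yourself flag as ``the main obstacle'', and it is real. First, the claim that the coefficients $R^{(i,L)}_\rho$ ``are rational numbers with denominators dividing a fixed power of $q(q-1)$'' is false: the $\chi\neq 1$ terms in Proposition~\ref{Fourier} carry the factors $g_{\chi^{-1},\psi}\,\chi(u)$, which are Gauss sums and roots of unity, hence algebraic but not rational in general, so the ``bound a rational number below the reciprocal of its denominator and conclude it vanishes'' mechanism does not apply as stated. Second, and more structurally, those same factors depend on the specific character $\psi$ (through $u$), not merely on $m_\psi$; thus the quantity you denote $E_f(Z_i,\psi_{m'})$ is not a well-defined function of $m'$ alone, and ``inverting a generalised Vandermonde system in the nodes $\rho^{-1}$ at fixed conductors'' does not recover a single set of coefficients. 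One would have to first average over $\psi$ of fixed conductor to isolate the $\chi=1$ contribution, and then use a character-by-character orthogonality argument for the twisted pieces; none of that is present. Third, even granting an expansion with coefficients depending only on $(i,L,m)$, you still need the coefficients to be bounded uniformly in $L$ (hence in $q=q_L$), and you invoke a ``trivial bound $|R^{(i,L)}_\rho(m)|\le cq^n$'' plus an unspecified ``limiting argument in the residue characteristic'' to promote the per-conductor hypothesis to a uniform one --- this is precisely the content of the cited Claim~3.2.7 of Saskia--Kien as described in Remark~\ref{exp1}, and it is not reconstructed here. Finally, a smaller point: replacing $\sigma_i$ by $\min(\sigma_i,\lct((f)))$ weakens both hypothesis and conclusion, so it does not reduce the general statement to the truncated one; it is harmless only because in the applications in this paper $\sigma_i$ does not exceed that threshold, but as written it is not a valid reduction.
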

\begin{rem}\label{exp1}
As in the proof of \cite[Claim 3.2.7]{Saskia-Kien}, the constant $M$ in Proposition \ref{transfer2} depends only on $f$ and the set $\{M_i|i\in I\}$ for a finite set $I\subset J$ depending only on $f$. In particular, we can take $I$ depending only on the numerical data associated with a log-resolution of $f$. By using log-resolutions of families as in \cite[Property 1.23]{Mustata2}, there exists a finite set $A$ of numerical data such that if $f$ is a polynomial of degree $d$ in $n$ variables over $\CC$, we can find a log-resolution of $f$ whose numerical data belongs to $A$. Therefore we can take $I$ depending only on $n,d$.

On the other hand, again by \cite[Claim 3.2.7]{Saskia-Kien}, we can take $C$ depending only on the set $\{C_i|i\in I\}$.
\end{rem}
\begin{rem}\label{algebraic}Proposition \ref{transfer2} still holds for any family of subschemes of $\AA^n_{\cO_K^{\rm alg}}$ if we use the convention that $E_{f}(Z,\psi)=0$ for all additive characters $\psi$ of $L$ whenever $Z$ is not defined over $\cO_L$.
\end{rem}

\section{Uniform bound for exponential sums modulo $p^m$ when $p$ is  large enough}\label{bound}
At the beginning of this section, we study the exponential sums at points $E_{f}(P,\psi)$ where $f\in\cO_K[x_1,...,x_n]$ is a non-constant polynomial, $\psi$ is any additive character of conductor $m_\psi>1$ on a local field $L$ over $\cO_K$ of large enough residue field characteristic and $P\in k_L^n$. After that, we will glue information on the exponential sums at points to obtain bound for the exponential sums associated with arbitrary subscheme $Z$ of $\AA_{\cO_K}^n$. 

Let $f$ be a polynomial in $\cO_K[x_1,...,x_n]$. Let $k$ be a field over $\cO_K$. Using the notation in Section \ref{setup}, for each $P=(a_1,...,a_n)\in k^n$ and $m\geq 1$ we denote
$$f_{P,m}(\tilde{x}^{(m)})=f_m(x_P^{(m)})$$
where $\tilde{x}^{(m)}=(x_{ij})_{1\leq i\leq m, 1\leq j\leq n}$ and $x_P^{(m)}=(x_{ij})_{0\leq i\leq m, 1\leq j\leq n}$ with $x_{0j}=a_j$ for all $1\leq j\leq n$. We use the convention that $f_{P,m}=0$ if $m<0$ and $f_{P,0}=f(P)$. Moreover, we identify $P^{(m)}(k)$ with $k^{mn}$. From now on, we suppose that the degree $d$ of $f$ is at least $2$.
\begin{lem}\label{locussing} Let $k$ be a field over $\cO_K$, $P\in k^n$ be a critical point of $f$ and $m\geq 1$. Then $f_{P,m}$ is a weighted homogeneous polynomial and its critical locus is 
$$\Sing(f_{P,m})=\cap_{1\leq j\leq n, 0\leq i\leq m-1}Z((F_{j})_{P,i}),$$
where $F_j=\frac{\partial f}{\partial x_j}$ for all $1\leq j\leq n$. 
\end{lem}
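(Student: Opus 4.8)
The plan is to realise $f_{P,m}$ as the $t^m$-coefficient of the one-parameter substitution $x_j=P_j+\sum_{i\ge 1}x_{ij}t^i$ into $f$, and to read off both the weighted homogeneity and the Jacobian ideal of $f_{P,m}$ directly from this description; the only thing that needs a word of care is that extracting the $t^m$-coefficient commutes with differentiating in the variables $x_{ij}$.

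Concretely, writing $x_i=(x_{i1},\dots,x_{in})$, I would set
\[
g:=f\Big(P+\sum_{i\ge 1}x_i t^i\Big)\in k\big[(x_{ij})_{i\ge 1,\,1\le j\le n}\big][[t]],
\]
so that, by the definitions recalled in Section~\ref{setup}, $g=\sum_{m'\ge 0}f_{P,m'}t^{m'}$, and a degree count in $t$ shows that $f_{P,m'}$ involves only the $x_{ij}$ with $1\le i\le m'$ (with $f_{P,0}=f(P)$ and $f_{P,l}=0$ for $l<0$). First I would prove weighted homogeneity: the substitution $x_{ij}\mapsto\lambda^i x_{ij}$ replaces $\sum_i x_i t^i$ by $\sum_i x_i(\lambda t)^i$, hence $g$ by $g|_{t\mapsto\lambda t}$, and comparing $t^m$-coefficients gives $f_{P,m}\big((\lambda^i x_{ij})_{i,j}\big)=\lambda^{m}f_{P,m}\big((x_{ij})_{i,j}\big)$. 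Thus $f_{P,m}$ is weighted homogeneous, with weight $i$ on the block $x_i$ and total degree $m>0$, and since $k$ is a field the identification $C_{f_{P,m}}=\Sing(f_{P,m})$ follows from the remark on weighted homogeneous polynomials recorded in Section~\ref{setup}.

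Next I would compute the partials. By the chain rule, for $i\ge 1$ and $1\le j\le n$,
\[
\frac{\partial g}{\partial x_{ij}}=t^{i}\cdot F_j\Big(P+\sum_{l\ge 1}x_l t^l\Big)=t^{i}\sum_{r\ge 0}(F_j)_{P,r}\,t^{r},
\]
where $(F_j)_{P,r}$ is built from $F_j=\partial f/\partial x_j$ exactly as $f_{P,r}$ is built from $f$ (so $(F_j)_{P,0}=F_j(P)$, and for $r\ge 1$ it is weighted homogeneous of degree $r$ in the $x_{lk}$ with $l\le r$). Since the formal $t$-expansion commutes with $\partial/\partial x_{ij}$ inside $k[(x_{ij})][[t]]$, we also have $\partial g/\partial x_{ij}=\sum_{m'\ge 0}(\partial f_{P,m'}/\partial x_{ij})t^{m'}$, and comparing $t^m$-coefficients yields
\[
\frac{\partial f_{P,m}}{\partial x_{ij}}=(F_j)_{P,\,m-i}\qquad(1\le i\le m,\ 1\le j\le n),
\]
with the convention $(F_j)_{P,l}=0$ for $l<0$. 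Setting $i':=m-i$, so that $i$ running through $1,\dots,m$ corresponds to $i'$ running through $0,\dots,m-1$, the Jacobian ideal $J_{f_{P,m}}$ is generated by the $(F_j)_{P,i'}$ with $1\le j\le n$ and $0\le i'\le m-1$, hence
\[
\Sing(f_{P,m})=C_{f_{P,m}}=Z\big(J_{f_{P,m}}\big)=\bigcap_{1\le j\le n,\ 0\le i\le m-1}Z\big((F_j)_{P,i}\big),
\]
which is the claim. (Since $P$ is a critical point of $f$, the terms with $i=0$ are cut out by $(F_j)_{P,0}=F_j(P)=0$ and impose no condition; they are kept only for a uniform indexing.)

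I do not expect any genuine obstacle here: the argument is essentially a bookkeeping exercise with the expansion notation of Section~\ref{setup}. The two points that deserve explicit justification are the interchange of $\partial/\partial x_{ij}$ with ``coefficient of $t^m$'' — immediate once $g$ is viewed as a power series in $t$ over the polynomial ring $k[(x_{ij})]$ — and the passage from $C_{f_{P,m}}$ to $\Sing(f_{P,m})$, which is exactly where weighted homogeneity together with $k$ being a field is used, via the statement already recorded in Section~\ref{setup}.
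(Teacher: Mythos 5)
Your proof is correct and follows essentially the same route as the paper's: both realize $f_{P,m}$ as the $t^m$-coefficient of $f(P+\sum_{i\ge 1}x_i t^i)$, apply the chain rule to obtain $\partial f_{P,m}/\partial x_{ij}=(F_j)_{P,m-i}$, and conclude via the Section~\ref{setup} remark that for weighted homogeneous polynomials over a field the critical locus equals the singular locus. The only cosmetic difference is that the paper first normalizes $P=0$ while you keep $P$ general, which changes nothing in the computation.
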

\begin{proof}It is sufficient to work with the case that $P=(0,...,0)$ and $f(P)=F_1(P)=...=F_n(P)=0$.
It is clear that $f_{P,m}$ is weighted homogeneous for all $m\geq 1$. If $x=(x_1(t),...,x_n(t))\in tk[[t]]^n$ with
$$x_j(t)=x_{1j}t+x_{2j}t^2+\ldots$$
for $1\leq j\leq n$ then we consider the formal function
$$F(t,\tilde{x}^{(\infty)}):=f(x)=\sum_{i\geq 1}f_{P,i}(\tilde{x}^{(i)})t^i$$
where $\tilde{x}^{(\infty)}=(x_{ij})_{1\leq i, 1\leq j\leq n}$. If $1\leq \ell$ and  $1\leq j\leq n$  then
$$\frac{\partial}{\partial x_{\ell j}}(F(t,\tilde{x}^{(\infty)}))=\frac{\partial}{\partial x_{\ell j}}(\sum_{i\geq 1}f_{P,i}(\tilde{x}^{(i)})t^i)=\sum_{i\geq 1}\frac{\partial f_{P,i}}{\partial x_{\ell j}}(\tilde{x}^{(i)})t^i$$
and
\begin{align*}
\frac{\partial}{\partial x_{\ell j}}(F(t,\tilde{x}^{(\infty)}))&=\frac{\partial}{\partial x_{\ell j}}(f(x_1(t),...,x_n(t)))\\
&=\frac{\partial f}{\partial x_j}(x_1(t),...,x_n(t))\frac{\partial}{\partial x_{\ell j}}(x_j(t))\\
&=F_j(x_1(t),...,x_n(t))t^\ell\\
&=\sum_{i\geq 1}(F_{j})_{P,i}(\tilde{x}^{(i)})t^{i+\ell}
\end{align*}
Hence,
\begin{equation}\label{Derivative1}
\frac{\partial f_{P,i}}{\partial x_{\ell j}}(\tilde{x}^{(i)})=(F_{j})_{P,i-\ell}(\tilde{x}^{(i-\ell)})
\end{equation}
for all $\ell, j$ with the convention that $(F_{j})_{P,i}=0$ if $i\leq 0$. Thus the lemma follows from (\ref{Derivative1}).
\end{proof}
\begin{defn}\label{deflog}Let $J_f$ be the Jacobian ideal of $f\in\cO_K[x_1,...,x_n]$ as in Section \ref{setup}. For each local field $L$ over $\cO_K$ of characteristic zero we also write $J_f$ for $J_f\otimes_{\cO_K} L$. For each $P\in k_L^n$ we set
$$\sigma_{P}(L,J_f)=\min_{Q\in \cO_L^n,\overline{Q}=P}\lct_Q(J_{f}),$$
where $\lct_Q(J_f)$ is the log-canonical threshold of $J_f$ at $Q$.
\end{defn}
\begin{rem}\label{reduction}  We have $\sigma_{P}(L,J_f)=+\infty$ if there is no critical point of $f$ in the set $\{x\in\cO_L^n|\overline{x}=P\}$. Let $h:X'\to \AA_K^n$ be a log-resolution of $J_f\otimes_{\cO_K} K$. There is an integer $M$ such that $h$ has good reduction modulo $\cM_L$ provided that $p_L>M$. Thus if $p_L>M$ and $P\in k_L^n$ is a critical point of $f$ then there exists $Q$ in $\cO_L^n$ such that $\overline{Q}=P$ and $Q$ is a critical point of $f$.
\end{rem}
\begin{lem}\label{boundsing}There exists an integer $M$ depending only on $f$ such that for all $m\geq 2$, all local fields $L\in\cL_{K,M}$ and all critical points $P\in k_L^n$ of $f$ we have
\begin{equation}\label{localsing}
\dim(\Sing(f_{P,m-1}))\leq mn-(m-1)\sigma_{P}(L,J_f).
\end{equation}
\end{lem}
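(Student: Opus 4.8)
The plan is to identify $\Sing(f_{P,m-1})$, up to a free affine factor $\AA^n$, with a fibre of a jet scheme of the critical scheme $C_f$, and then to bound its dimension by the jet-scheme description of the log canonical threshold of $J_f$, using a single fixed log-resolution of $J_f$ with good reduction in order to make this work uniformly in the residue characteristic. First I would unwind Lemma~\ref{locussing}: since $P$ is a critical point, $(F_j)_{P,0}=F_j(P)=0$ identically, so
\[
\Sing(f_{P,m-1})=\bigcap_{1\le j\le n,\ 1\le i\le m-2}Z\bigl((F_j)_{P,i}\bigr)\subseteq\AA^{(m-1)n},
\]
where the ambient coordinates are $\tilde x^{(m-1)}$. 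None of these equations involves the last block $(x_{m-1,1},\dots,x_{m-1,n})$, so $\Sing(f_{P,m-1})\cong A_{m-2}\times\AA^n$, where $A_r\subseteq\AA^{rn}$ is the zero locus of the $(F_j)_{P,i}$ with $1\le j\le n$, $1\le i\le r$. Since an $r$-jet $\gamma(t)=P+\sum_{i=1}^r(x_{i1},\dots,x_{in})t^i$ in $(k_L[t]/(t^{r+1}))^n$ satisfies $F_j(\gamma(t))=\sum_{i\ge 0}(F_j)_{P,i}(\tilde x^{(i)})t^i$, the scheme $A_r$ is exactly the locus of such $r$-jets based at $P$ along which every $F_j$ vanishes to order $\ge r+1$ --- equivalently, the fibre over $P$ of the truncation map from the $r$-th jet scheme of $C_f\otimes k_L$ to $C_f\otimes k_L$. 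In particular $\dim\Sing(f_{P,m-1})=n+\dim A_{m-2}$.

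Next I would fix, once and for all, a log-resolution $h\colon X'\to\AA^n_{K}$ of $J_f\otimes_{\cO_K}K$, with numerical data $(\nu_i,N_i)_{i\in\cT}$, and take $M$ large enough that $h$ has tame good reduction modulo $\cM_L$ for every $L\in\cL_{K,M}$ (and also large enough for Remark~\ref{reduction}); then $M$ depends only on $f$. For such $L$ the reduction $\overline h_L\colon\overline{X'_L}\to\AA^n_{k_L}$ is again a log-resolution of $J_f\otimes k_L$ with the same numerical data, and I would run Musta\c{t}\u{a}'s jet-scheme estimate over $k_L$. That estimate is a local change-of-variables computation along $\overline h_L$ and uses only that $\overline h_L$ is an isomorphism away from the exceptional locus, that $\bigcup_i\overline{E}_{iL}$ is a simple normal crossing divisor, and that the $N_i$, $\nu_i$ are the local exponents of $J_f\cO_{\overline{X'_L}}$ and of $K_{\overline{X'_L}/\AA^n_{k_L}}$ --- all of which survive (tame) good reduction. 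With $r=m-2$ it gives
\[
\dim A_{m-2}\ \le\ (m-1)\Bigl(n-\min\bigl\{\tfrac{\nu_i}{N_i}\ :\ P\in\overline h_L(\overline{E}_{iL})(\overline{k_L})\bigr\}\Bigr).
\]

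It then remains to identify the minimum above with $\sigma_P(L,J_f)=\min_{\overline Q=P}\lct_Q(J_f)$. For the inequality ``$\le$'', if $P\in\overline h_L(\overline{E}_{iL})$, choose $\overline y\in\overline{E}_{iL}$ over $P$; as $\overline{E}_{iL}$ is smooth, $\overline y$ lifts, after a finite extension of $L$ (which leaves log canonical thresholds unchanged), to an integral point $y$ of $E_{iL}$, and $Q:=h(y)$ reduces to $P$ and lies in $h(E_{iL})$, so $\lct_Q(J_f)\le\nu_i/N_i$. For ``$\ge$'', choose $Q\in\cO_L^n$ over $P$ with $\lct_Q(J_f)=\sigma_P(L,J_f)$, an index $i$ with $\nu_i/N_i=\lct_Q(J_f)$, and $y\in E_{iL}$ with $h(y)=Q$; properness of $h$ forces $y$ to be integral, its reduction lies in $\overline{E}_{iL}$ and is sent by $\overline h_L$ to $\overline Q=P$. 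Combining the three displays yields
\[
\dim\Sing(f_{P,m-1})\ \le\ n+(m-1)\bigl(n-\sigma_P(L,J_f)\bigr)\ =\ mn-(m-1)\sigma_P(L,J_f),
\]
which is (\ref{localsing}).

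The step I expect to be the genuine obstacle is the middle one: running the jet-scheme/change-of-variables computation over the positive-characteristic field $k_L$ rather than over $\CC$. One cannot simply deduce the positive-characteristic bound from the characteristic-zero one by reduction, because the singular locus of a hypersurface can only grow under specialization; this is precisely why a log-resolution of $J_f$ with good reduction is indispensable here, and why the threshold $M$ appears and may be taken to depend on $f$ alone. Everything else --- Lemma~\ref{locussing}, the jet-scheme identification, and the lifting argument for $\sigma_P$ --- is formal.
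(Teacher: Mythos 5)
Your proof takes the same route as the paper's: reduce, via Lemma~\ref{locussing}, to bounding the dimension of the fibre over $P$ of a jet scheme of $C_f$; apply the Musta\c{t}\u{a}/ELM jet-scheme estimate for the log canonical threshold, made valid over $k_L$ by choosing a log-resolution of $J_f$ with good reduction; and then match the numerical data $\nu_i/N_i$ of the reduced resolution with $\sigma_P(L,J_f)$. You spell out the jet-scheme identification and the lifting argument in more detail than the paper's terse citation of \cite{ELM} and ``definition of good reduction,'' and you correctly pinpoint the positive-characteristic transfer as the step that forces the good-reduction hypothesis and the dependence of $M$ on $f$.
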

\begin{proof}Let $h,M$ be as in Remark \ref{reduction}. Suppose that $\Div(h^{-1}(J_f))=\sum_{i\in\cT}N_iE_i$ and $K_{X'/X}=\sum_{i\in\cT}(\nu_i-1)E_i$. Fix a local field $L\in \cL_{K,M}$. By our assumption, $h$ induces a log-resolution $h_L$ for $J_f\otimes_{\cO_K}L$ and $h_L$ induces a log-resolution $h_{k_L}$ for $J_f\otimes_{\cO_K}k_L$. We can use Lemma \ref{locussing} and the description of the log-canonical thresholds of $J_f$ in \cite{ELM} to see that 
\begin{equation}\label{log-canonical1}
\dim(\Sing(f_{P,m-1})_{k_L})\leq mn-(m-1)\min_{P\in h_{k_L}(\overline{E}_i)}\frac{\nu_i}{N_i}
\end{equation}  
By the definition of good reduction, one has
\begin{equation}\label{log-canonical2}
\min_{P\in h_{k_L}(\overline{E}_i)}\frac{\nu_i}{N_i}=\sigma_{P}(L,J_f).
\end{equation}
Thus our lemma follows by combining (\ref{log-canonical1}) and (\ref{log-canonical2}).
\end{proof}

\begin{prop}\label{sumsfinite1}For each $m\geq 2$, there exist a constant $M_m$ depending only on $m,f$ and a constant $B_m$ depending only on $m, n, d$  such that for all finite fields $k$ over $\cO_K$ of characteristic at least $M_m$ with structure homomorphism $\varphi:\cO_K\to k$, all $a\in k^{\times}$ and all critical points $P\in k^n$ of $f$ we have
$$||k|^{-mn}\sum_{\tilde{x}^{(m-1)}\in k^{(m-1)n}}\exp_k(af_{P,m-1}(\tilde{x}^{(m-1)}))|\leq B_m|k|^{\frac{-m\sigma_{P}(L,J_f)+\sigma_{P}(L,J_f)-n}{2}}$$
if $L$ is a local field over $\cO_K$ of characteristic zero such that $k_L=k$ and the structure homomorphism $\varphi:\cO_K\to k$ is the reduction modulo $\cM_L$.
\end{prop}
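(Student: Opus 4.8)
The plan is to feed the geometry of Lemmas \ref{locussing} and \ref{boundsing} into a Katz-type estimate for exponential sums of weighted homogeneous polynomials over finite fields, namely the consequence of Katz's theorem recorded in \cite{CDenSperlocal}. Fix $m\geq 2$, a finite field $k$ over $\cO_K$, an element $a\in k^\times$, and a critical point $P\in k^n$ of $f$; set $g:=f_{P,m-1}$, viewed as a polynomial in the $(m-1)n$ variables $\tilde x^{(m-1)}=(x_{ij})_{1\leq i\leq m-1,\,1\leq j\leq n}$. By Lemma \ref{locussing}, $g$ is weighted homogeneous with $x_{ij}$ of weight $i$ and $g$ of weighted degree $m-1$; its ordinary degree is at most $d$ (each $x_j(t)$ enters $f$ linearly in the $x_{ij}$'s), and $\Sing(g)\neq\emptyset$, as it contains the origin once $P$ is critical. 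In particular, the combinatorial type of $g$ --- number of variables, weights, weighted degree, ordinary degree --- depends only on $m,n,d$, not on $k$, $P$, or $f$.

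Next I would apply the estimate of \cite{CDenSperlocal} for weighted homogeneous exponential sums: there is a constant $M'_m$ depending only on $m,n,d$ (large enough that the weights $1,\dots,m-1$ are invertible in $k$ and the relevant cohomological estimates apply) and a constant $B_m$ depending only on $m,n,d$ such that, whenever $\operatorname{char}(k)\geq M'_m$,
\[
\Bigl|\sum_{\tilde x^{(m-1)}\in k^{(m-1)n}}\exp_k\bigl(a\,g(\tilde x^{(m-1)})\bigr)\Bigr|\ \leq\ B_m\,|k|^{\frac{(m-1)n+\dim\Sing(g)}{2}}.
\]
Here the constant is uniform over $a\in k^\times$ because $\Sing(ag)=\Sing(g)$, and uniform over the $g=f_{P,m-1}$ that occur thanks to the bounded type established above.

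To conclude, I would invoke Lemma \ref{boundsing}: there is an integer $M''$ depending only on $f$ such that $\dim\Sing(f_{P,m-1})\leq mn-(m-1)\sigma_{P}(L,J_f)$ for every $L$ as in the statement with residue characteristic at least $M''$. Dividing the displayed inequality by $|k|^{mn}$, substituting this dimension bound, and using $-(m+1)n+mn-(m-1)\sigma_{P}(L,J_f)=\sigma_{P}(L,J_f)-m\sigma_{P}(L,J_f)-n$, one arrives at
\[
|k|^{-mn}\Bigl|\sum_{\tilde x^{(m-1)}\in k^{(m-1)n}}\exp_k\bigl(a\,f_{P,m-1}(\tilde x^{(m-1)})\bigr)\Bigr|\ \leq\ B_m\,|k|^{\frac{-m\sigma_{P}(L,J_f)+\sigma_{P}(L,J_f)-n}{2}},
\]
which is the asserted estimate; one then takes $M_m=\max(M'_m,M'')$.

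The main obstacle is the uniformity of $B_m$ --- it must depend on neither $k$, nor the critical point $P$, nor $f$ beyond its degree $d$. This rests on the observation that the weighted homogeneous data of $f_{P,m-1}$ is bounded purely in terms of $m,n,d$, so that only finitely many weighted homogeneous ``types'' occur and $B_m$ may be taken as the maximum of the corresponding constants coming from \cite{CDenSperlocal}. A secondary point to check is that the cited estimate is read with the dimension of the \emph{affine} critical locus $\Sing(g)$ (which always contains the origin), matching the normalization in Lemma \ref{boundsing}; the discrepancy with a projective singular locus is just the usual ``$+1$'' from the cone structure and is harmless.
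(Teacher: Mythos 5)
Your proposal is correct and follows essentially the same route as the paper's (one-line) proof: combine the dimension bound of Lemma~\ref{boundsing} with the Katz-type estimate of \cite[Theorem 7.4]{CDenSperlocal} for weighted homogeneous exponential sums, then do the bookkeeping with the exponents. You have spelled out the arithmetic and the uniformity of $B_m$ (bounded weighted-homogeneous type in terms of $m,n,d$) and the affine-versus-projective normalization, all of which the paper leaves implicit.
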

\begin{proof}
The proposition follows by combining (\ref{localsing}) with the discussion in \cite[Theorem 7.4]{CDenSperlocal} and \cite{Katz}.
\end{proof}
\begin{rem}\label{reduction1}With a log-resolution $h$ as in Remark \ref{reduction}, the proof of Lemma \ref{boundsing} and the discussion in \cite{Katz} imply that Proposition \ref{sumsfinite1} holds if $h$ has good reduction modulo $\cM_L$ and $p_L>m$. In particular, with $M$ as in Remark \ref{reduction}, we can take $M_m=\max\{m,M\}$.
\end{rem}
In Definition \ref{deflog}, if $Q\in L^n$ is a critical point of $f$ then  $\lct_Q(J_f)\leq n$ by \cite[Property 1.14]{Mustata2}. Thus we can combine Proposition \ref{sumsfinite1}, Definition \ref{deflog} and Remark \ref{reduction} to deduce that:
\begin{prop}\label{sumsfinite2}For each $m\geq 2$, there exist a constant $M_m$ depending only on $m,f$ and a constant $B_m$ depending only on $m, n, d$ such that for all finite fields $k$ over $\cO_K$ of characteristic at least $M_m$ with structure homomorphism $\varphi:\cO_K\to k$, all $a\in k^{\times}$ and all critical points $P\in k^n$ of $f$ we have
$$||k|^{-mn}\sum_{\tilde{x}^{(m-1)}\in k^{(m-1)n}}\exp_k(af_{P,m-1}(\tilde{x}^{(m-1)}))|\leq B_m|k|^{\frac{-m\sigma_{P}(L,J_f)}{2}}$$
if $L$ is a local field over $\cO_K$ of characteristic zero such that $k_L=k$ and the structure homomorphism $\varphi:\cO_K\to k$ is the reduction modulo $\cM_L$.
\end{prop}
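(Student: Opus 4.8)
The plan is to take the estimate already proved in Proposition \ref{sumsfinite1} and sharpen the exponent on $|k|$ by controlling the log-canonical threshold that occurs in it. Recall that Proposition \ref{sumsfinite1} yields, for $m\geq 2$, a local field $L$ of characteristic zero whose residue field $k=k_L$ has characteristic at least $M_m$, any $a\in k^{\times}$, and any critical point $P\in k^n$ of $f$, the bound
$$\Big||k|^{-mn}\sum_{\tilde{x}^{(m-1)}\in k^{(m-1)n}}\exp_k\big(af_{P,m-1}(\tilde{x}^{(m-1)})\big)\Big|\leq B_m\,|k|^{\frac{-m\sigma_{P}(L,J_f)+\sigma_{P}(L,J_f)-n}{2}}.$$
Since $|k|>1$, to upgrade this to the asserted $B_m|k|^{-m\sigma_{P}(L,J_f)/2}$ it suffices to show that the correction $\sigma_{P}(L,J_f)-n$ in the numerator of the exponent is $\leq 0$, that is, $\sigma_{P}(L,J_f)\leq n$.

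The main step is therefore to verify the inequality $\sigma_{P}(L,J_f)\leq n$ for every critical point $P\in k^n$ of $f$, once the residue characteristic is large enough. By Definition \ref{deflog}, $\sigma_{P}(L,J_f)=\min_{Q\in\cO_L^n,\ \overline{Q}=P}\lct_Q(J_f)$. Fix a log-resolution $h$ of $J_f\otimes_{\cO_K}K$ and let $M$ be the integer of Remark \ref{reduction}, depending only on $f$, past which $h$ has good reduction. For $p_L>M$, Remark \ref{reduction} produces a point $Q\in\cO_L^n$ with $\overline{Q}=P$ which is again a critical point of $f$; in particular $\sigma_{P}(L,J_f)$ is finite, which also removes the only apparent difficulty, namely that $\sigma_{P}(L,J_f)$ could a priori equal $+\infty$. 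Since $Q$ is a critical point of $f$, \cite[Property 1.14]{Mustata2} gives $\lct_Q(J_f)\leq n$, and hence $\sigma_{P}(L,J_f)\leq\lct_Q(J_f)\leq n$.

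It then remains to assemble the constants and run the trivial estimate. I would replace $M_m$ by $\max\{M_m,M\}$ (still depending only on $m$ and $f$) and keep $B_m$ (depending only on $m,n,d$). For residue characteristic at least this new $M_m$ both the bound of Proposition \ref{sumsfinite1} and the inequality $\sigma_{P}(L,J_f)\leq n$ hold, so, using $|k|>1$ and $\sigma_{P}(L,J_f)-n\leq 0$,
$$|k|^{\frac{-m\sigma_{P}(L,J_f)+\sigma_{P}(L,J_f)-n}{2}}=|k|^{-\frac{m\sigma_{P}(L,J_f)}{2}}\cdot|k|^{\frac{\sigma_{P}(L,J_f)-n}{2}}\leq|k|^{-\frac{m\sigma_{P}(L,J_f)}{2}},$$
and combining this with the displayed inequality of Proposition \ref{sumsfinite1} gives exactly the claim.

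I do not expect any real obstacle here: the analytically substantive content — the dimension bound for $\Sing(f_{P,m-1})$ obtained from the numerical data of a log-resolution (Lemma \ref{boundsing}) together with Katz's exponential-sum estimates feeding Proposition \ref{sumsfinite1} — is already in place. The only point that requires a little care is the passage to large residue characteristic, which is exactly what guarantees, through good reduction of the log-resolution, that critical points of $f$ over $k_L$ lift to critical points over $\cO_L$, so that $\sigma_{P}(L,J_f)$ is both finite and bounded by $n$.
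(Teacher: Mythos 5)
Your proposal is correct and follows exactly the paper's route: starting from Proposition \ref{sumsfinite1}, using Remark \ref{reduction} to lift the critical point $P\in k^n$ to a critical point $Q\in\cO_L^n$ once $p_L$ exceeds the good-reduction bound $M$, invoking \cite[Property 1.14]{Mustata2} to get $\lct_Q(J_f)\leq n$, and hence $\sigma_P(L,J_f)\leq n$, which drops the factor $|k|^{(\sigma_P(L,J_f)-n)/2}\leq 1$ from the exponent. The paper presents precisely this combination (Proposition \ref{sumsfinite1}, Definition \ref{deflog}, Remark \ref{reduction}, and the $\lct\leq n$ bound) in the sentence immediately preceding Proposition \ref{sumsfinite2}, so there is nothing further to add.
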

Because of Propositions \ref{transfer1}, \ref{transfer2}, \ref{sumsfinite2} and  Remarks  \ref{exp1}, \ref{algebraic}, we deduce that:
\begin{prop}\label{uniformlocal1}There exist a constant $M$ depending only on $f$ and a constant $C$ depending only on $n,d$ such that for all local fields $L\in\tilde{\cL}_{K,M}$, all critical points $P\in k_L^n$ of $f$ and all additive characters $\psi$ of $L$ of conductor $m_\psi\geq 2$ we have
$$|E_{f}(P,\psi)|\leq Cm_\psi^{n-1}q_L^{\frac{-m_\psi\sigma_{P}(\tilde{L},J_f)}{2}},$$
where $\tilde{L}$ is a local field over $\cO_K$ of characteristic zero with $k_{\tilde{L}}\simeq k_L$.
\end{prop}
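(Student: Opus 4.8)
The plan is to derive Proposition~\ref{uniformlocal1} by chaining together the finite-field reduction of Proposition~\ref{transfer1}, the bound for weighted homogeneous exponential sums of Proposition~\ref{sumsfinite2}, and the transfer principle of Proposition~\ref{transfer2}, with the bookkeeping of constants read off from Remarks~\ref{exp1} and~\ref{algebraic}.

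The first move is purely local. Fix $m\geq 2$, a local field $L\in\cL'_{K,m+1}$ of positive characteristic, an additive character $\psi$ of $L$ of conductor $m$, and a critical point $P\in k_L^n$ of $f$. Applying Proposition~\ref{transfer1} with $Z=\{P\}$ — so that the fiber $P^{(m-1)}(k_L)$ is identified with $k_L^{(m-1)n}$ and $f_0(x^{(0)})\equiv f(P)$ on it — yields $a\in k_L^{\times}$ and $b\in k_L$ with
\[
E_f(P,\psi)=q_L^{-mn}\,\exp_{p_L}\!\big(\Tr_{k_L/\FF_{p_L}}(b\,f(P))\big)\sum_{\tilde x^{(m-1)}\in k_L^{(m-1)n}}\exp_{p_L}\!\big(\Tr_{k_L/\FF_{p_L}}(a\,f_{P,m-1}(\tilde x^{(m-1)}))\big).
\]
The prefactor has modulus $1$, so it disappears from $|E_f(P,\psi)|$, leaving exactly the character sum of the weighted homogeneous polynomial $f_{P,m-1}$ (Lemma~\ref{locussing}) that is controlled by Proposition~\ref{sumsfinite2}. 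Choosing, for $p_L$ large, a characteristic-zero local field $\tilde L$ over $\cO_K$ with $k_{\tilde L}\simeq k_L$, Proposition~\ref{sumsfinite2} then gives, for each fixed $m\geq 2$, a threshold $M_m$ depending only on $m$ and $f$ and a constant $B_m$ depending only on $m,n,d$, such that for every such $L$ with $p_L\geq M_m$ and every critical point $P\in k_L^n$,
\[
|E_f(P,\psi)|\leq B_m\,q_L^{-m\,\sigma_P(\tilde L,J_f)/2}.
\]

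The second move removes the restriction to positive characteristic and to a fixed conductor and produces the extra factor $m_\psi^{n-1}$, using the transfer principle. The subtlety is that Proposition~\ref{transfer2} is stated for a family of subschemes carrying \emph{fixed} exponents, whereas $\sigma_P(\tilde L,J_f)$ varies with $P$. This is handled by observing that, once $p_L$ exceeds a bound depending only on $f$, a fixed log resolution $h$ of $J_f\otimes K$ has good reduction and, by the computation behind Lemma~\ref{boundsing} and Remark~\ref{reduction}, $\sigma_P(\tilde L,J_f)=\min\{\nu_i/N_i : P\in\overline{h(E_i)}(k_L)\}$; in particular this exponent takes only finitely many values and is intrinsic to $P$ (independent of the large $p_L$ and of the lift). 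Organising the critical points of $f$ over $\cO_K^{\mathrm{alg}}$ into the corresponding family — using Remark~\ref{algebraic} to allow subschemes defined over $\cO_K^{\mathrm{alg}}$, and attaching to each stratum the constant exponent equal to half the corresponding minimal value $\nu_i/N_i$ — the bounds of the first move supply, for every $m\geq 2$, the hypothesis of Proposition~\ref{transfer2} with constants $B_m$ and thresholds $M_m$, and its conclusion is exactly
\[
|E_f(P,\psi)|\leq C\,m_\psi^{n-1}\,q_L^{-m_\psi\,\sigma_P(\tilde L,J_f)/2}
\]
for all $L\in\tilde{\cL}_{K,M}$, all critical points $P\in k_L^n$, and all $\psi$ of conductor $m_\psi>1$.

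Finally, the constants have the asserted shape by Remark~\ref{exp1}: the thresholds and constants entering the transfer principle are governed by a finite set of numerical data which, by log resolutions of families as in \cite{Mustata2}, depends only on $n$ and $d$, so $M$ may be taken to depend only on $f$ and $C$ only on $n$ and $d$. I expect the main obstacle to be precisely this middle step: setting up the family of critical points over $\cO_K^{\mathrm{alg}}$ so that Proposition~\ref{transfer2} applies with honest fixed exponents — checking that for large residue characteristic $\sigma_P(\tilde L,J_f)$ is indeed read off combinatorially from the components $\overline{h(E_i)}$ passing through $P$, that this stratification of $C_f\otimes\cO_K^{\mathrm{alg}}$ is a genuine finite constructible one defined over $\cO_K^{\mathrm{alg}}$, and that its complexity, together with the $M_m$ and $B_m$, remains bounded in terms of $n$ and $d$ so that the uniformity of $M$ and $C$ survives.
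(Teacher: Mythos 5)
Your proposal matches the paper's own proof, which is a one-line deduction from Propositions~\ref{transfer1}, \ref{sumsfinite2}, \ref{transfer2} and Remarks~\ref{exp1}, \ref{algebraic} in exactly the sequence you describe: reduce to positive-characteristic local fields and a finite-field sum via Proposition~\ref{transfer1}, bound that sum by Proposition~\ref{sumsfinite2}, then lift to all $L\in\tilde{\cL}_{K,M}$ with the extra $m_\psi^{n-1}$ factor by Proposition~\ref{transfer2}, invoking Remark~\ref{algebraic} for the family of critical points over $\cO_K^{\rm alg}$ and Remark~\ref{exp1} for the uniformity of $M$ and $C$. Your more detailed account of the middle step — stratifying critical points by the finitely many values $\nu_i/N_i$ read off a fixed log resolution of $J_f$, so that the exponents fed into Proposition~\ref{transfer2} are honestly fixed — is precisely the content implicit in the paper's citations of Remarks~\ref{reduction}, \ref{exp1} and \ref{algebraic}, so you are filling in the intended argument rather than departing from it.
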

\begin{cor}\label{uniformlocal2}Let $f$ be a polynomial in $n$ variables over $\cO_K$ and of degree $d>1$. Let $s$ be as in Section \ref{sec:intro}. There exist a constant $M$ depending only on $f$ and a constant $C$ depending only on $n,d$ such that for  all local fields $L\in\tilde{\cL}_{K,M}$, all additive characters $\psi$ of $L$ of conductor $m_\psi\geq 2$ and all critical points $P\in k_L^n$ of $f$ we have
$$|E_{f}(P,\psi)|\leq Cm_\psi^{n-1}q_L^{\frac{-m_\psi(n-s)}{2(d-1)}}.$$
\end{cor}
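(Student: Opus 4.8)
The plan is to deduce this from Proposition \ref{uniformlocal1} by bounding $\sigma_{P}(\tilde{L},J_f)$ from below by $(n-s)/(d-1)$ for every critical point $P$, uniformly in $L$, so that the exponent $-m_\psi\sigma_P(\tilde L,J_f)/2$ in Proposition \ref{uniformlocal1} is at most $-m_\psi(n-s)/(2(d-1))$, and then absorbing the change of the (possibly larger) constant $M$ into a single $M$ depending only on $f$ and a $C$ depending only on $n,d$. So the entire content is the inequality
\[
\lct_Q(J_f) \;\geq\; \frac{n-s}{d-1} \qquad \text{for every critical point } Q \text{ of } f,
\]
valid for $\tilde L$ of large residue characteristic (where a log-resolution of $J_f$ has good reduction, so that $\sigma_P(\tilde L,J_f)$ is literally the minimum of $\nu_i/N_i$ over exceptional divisors through a lift of $P$).

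First I would reduce to a statement over $\CC$: the log-canonical threshold of $J_f$ at a point, computed from a log-resolution with good reduction, agrees with the one computed over $\overline{K}$ hence over $\CC$, so it suffices to prove $\lct_a(J_f)\geq (n-s)/(d-1)$ for all $a\in\CC^n$ with $a$ a critical point of $f\colon\CC^n\to\CC$. The key geometric input is that the critical locus $C_f=Z(J_f)$ has a controlled dimension in terms of $s$: the cone direction at infinity of $C_f$ is governed by $C_{f_d}$, whose projectivization has dimension $s$ (that is what $s=s(f)$ means), so $\dim C_f \le s+1$ near infinity, and one expects $\codim C_f\ge n-s-1$; more importantly one has a bound on the number of generators versus the codimension. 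The standard tool here is the estimate $\lct_a(\mathfrak a)\ge \codim_a Z(\mathfrak a)/(\text{max degree of generators of } \mathfrak a)$ — but $J_f$ is generated by the $n$ partials, each of degree $d-1$, so Skoda-type / de Fernex–Ein–Mustaţă bounds (cf. \cite{ELM}, \cite{Mustata2}) give $\lct_a(J_f)\ge \codim_a(C_f)/(d-1)$. Thus it remains to show $\codim_a(C_f)\ge n-s$ at every critical point $a$, i.e.\ $\dim_a C_f\le s$.

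The hard part will be exactly this dimension count $\dim C_f\le s$ — it is \emph{not} literally true globally (e.g.\ $C_f$ can have components of dimension up to $s+1$ coming from the behaviour at infinity), so I would need to run the argument component by component, as in the proof of Proposition \ref{Deligne-Katz} / \cite[Section 3.5]{CluckerNguyen}: for an affine component of $C_f$ of dimension $e$, one relates it after a generic linear change of coordinates to the critical locus of a lower-dimensional Deligne-type polynomial, using that the leading form's critical locus has dimension $s$; one peels off one variable at a time and tracks how $n$, $d$ and $s$ change. I expect the clean way is to prove directly, by the same induction on $s$ used for Proposition \ref{Deligne-Katz}, that $\lct(J_f)\ge (n-s)/(d-1)$: the base case $s=0$ (Deligne polynomial) asserts $\lct(J_{f_d})\ge n/(d-1)$ plus a perturbation argument that the lower-order terms of $f$ do not decrease the threshold below $n/(d-1)$ near any critical point, which follows since $C_{f_d}=\{0\}$ makes $J_{f_d}$ an $\fm$-primary ideal of colength $(d-1)^n$, giving $\lct(J_{f_d})= $ something $\ge n/(d-1)$; the inductive step slices by a generic hyperplane and uses the behaviour of log-canonical thresholds under restriction. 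Once $\lct_Q(J_f)\ge (n-s)/(d-1)$ is established for all $Q$ with residue characteristic beyond the good-reduction bound, plugging into Proposition \ref{uniformlocal1} and taking $M$ to be the larger of the two thresholds (still depending only on $f$) and keeping the same $C$ (depending only on $n,d$, since $(n-s)/(d-1)$ is a function of $n,d,s$ and $s\le n$) finishes the corollary.
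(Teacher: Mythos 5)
Your final ``clean way'' is essentially the paper's proof: from Proposition \ref{uniformlocal1}, Definition \ref{deflog} and Remark \ref{reduction} one reduces to showing $\lct_Q(J_f)\geq (n-s)/(d-1)$ for every critical point $Q$ over $\CC$, and the paper establishes this precisely by the induction on $s$ you outline — slicing by a generic hyperplane $H$ with $\dim\Sing(f_d|_H)=s-1$, using the restriction theorem $\lct_0(J_f)\geq\lct_0(J_f|_H)$ and the inclusion $J_{f|_H}\subset J_f|_H$, and reducing to $s=0$. The two places where you diverge or under-specify are the base case and your first (abandoned) attempt. For $s=0$ the paper does not invoke a colength/multiplicity bound; it uses a scaling-plus-semicontinuity argument (your ``perturbation argument'', made precise via the family $a^{d-1}J_f(x/a)$ and \cite[Property 1.24]{Mustata2}) to reduce $J_f$ to $J_{f_d}$, and then computes $\lct_0(J_{f_d})=n/(d-1)$ directly from the arc-space characterization in \cite{ELM}, using only that $J_{f_d}$ is $\mathfrak{m}$-primary and homogeneous of degree $d-1$ (so $\ord_t(J_{f_d}(x))=(d-1)\ord_t(x)$ for arcs through the origin). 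Your route via ``$J_{f_d}$ is $\mathfrak{m}$-primary of colength $(d-1)^n$, giving $\lct\geq n/(d-1)$'' needs the de Fernex--Ein--Musta\c{t}\u{a} multiplicity bound $\lct(\mathfrak{a})\geq n/e(\mathfrak{a})^{1/n}$ to be correct, which is a heavier input than the paper's elementary arc computation, though it does work. As for your first approach (codimension-over-degree bound plus $\dim_a C_f\leq s$): your worry that $C_f$ could have components of dimension $s+1$ is actually unfounded — every positive-dimensional component of the affine variety $C_f$ meets the hyperplane at infinity inside the projectivization of $\Sing(f_d)$, so $\dim C_f\leq s$ does hold — but you would still need to cite a precise lct-versus-codimension bound (which again reduces to the same slicing/arc arguments), so the paper's route is cleaner and you were right to prefer it.
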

\begin{proof}As in Section \ref{sec:intro}, we denote by $f_d$ the homogeneous part of degree $d$ of $f$. Let $L$ be a local field over $\cO_K$ of characteristic $0$. Because of Proposition \ref{uniformlocal1}, Definition \ref{deflog} and Remark \ref{reduction}, it remains to show that  
\begin{equation}\label{lowerbound}
\lct_Q(J_f)\geq \frac{n-s}{d-1}
\end{equation}
if $Q\in \cO_L^n$ is a critical point of $f$.
First of all, we can suppose that $Q=0$ by a suitable changing of coordinates. We can take a hyperplane $H$ passing through the origin  $0$ such that $f_d|_{H}\neq 0$ and $\dim(\Sing(f_d|_{H}))=s-1$. Because of \cite[Property 1.17]{Mustata2} we have $\lct_0(J_f)\geq \lct_0(J_f|_H)$. It is easy to see that $J_{f|_H}\subset J_f|_H$ thus  we can use \cite[Property 1.12]{Mustata2} to deduce that $\lct_0(J_f)\geq \lct_0(J_f|_H)\geq \lct_0(J_{f|_H})$. Therefore we can reduce to the case $s=0$ by induction. Let us check (\ref{lowerbound}) when $s=0$. Since log canonical threshold is stable under changing of coordinates, for each $a\neq 0$ we have $\lct_0(a^{d-1}J_{f}(x/a))=\lct_{0}(J_{f}(x/a))=\lct_0(J_f(x))$. Because of the semicontinuity of the log canonical threshold in \cite[Property 1.24]{Mustata2}, we have $\lct_0(a^{d-1}J_{f}(x/a))\geq \lct_0(J_{f_d})$. Note that $\dim(\Sing(f_d))=0$ and $f_d$ is homogeneous. Thus for each $x\in tL[[t]]^n$,  $\ord_t(J_{f_d}(x)):=\min_{g\in J_{f_d}}\ord_t(g(x))> m$ iff $\ord_t(x)>[\frac{m}{d-1}]$. Where, as usual, $[\frac{m}{d-1}]$ is the integer part of $\frac{m}{d-1}$. Therefore the description of the log-canonical threshold in \cite{ELM} implies $\lct_0(J_{f_d})=\frac{n}{d-1}$, as desired.
\end{proof}
\begin{cor}\label{uniformlocal3}There exist a constant $M$ depending only on $f$ and a constant $C$ depending only on $n,d$ such that for all local fields $L\in\tilde{\cL}_{K,M}$, all additive characters $\psi$ of $L$ of conductor $m_\psi\geq 2$ and all critical points  $P\in k_L^n$ of $f$ we have
$$|E_{f}(P,\psi)|\leq Cm_\psi^{n-1}p^{-m_\psi\tilde{\sigma}_{P,\tilde{L}}},$$
with $\tilde{L}$ as in Proposition \ref{uniformlocal1} and with
\begin{equation}\label{localinva}
\tilde{\sigma}_{P,\tilde{L}}=\min_{Q\in C_f(\tilde{L}), P=\overline{Q}}\frac{n-s_{Q}}{2(d_{Q}-1)}.
\end{equation}
Where if $Q\in C_f(\tilde{L})$ then $d_Q>1$ and $s_Q$ are natural numbers such that  one can write $f(x+Q)=f(Q)+f_{d_Q}(x)+...+f_d(x)$ with $f_{d_Q}\neq 0$, $f_i$ is homogeneous for all $i$ and $s_Q=\dim(\Sing(f_{d_Q}))$.
\end{cor}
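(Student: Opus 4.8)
The plan is to deduce Corollary~\ref{uniformlocal3} from Proposition~\ref{uniformlocal1} by sharpening the exponent there, using only the crude inequality $q_L\geq p_L$ together with a \emph{pointwise} lower bound for the log-canonical threshold of $J_f$; that bound will in turn be obtained by localizing the argument that gave inequality~(\ref{lowerbound}) in the proof of Corollary~\ref{uniformlocal2}. Concretely, I would first fix $M$ and $C$ as in Proposition~\ref{uniformlocal1}, enlarging $M$ if necessary so that a fixed log-resolution of $J_f\otimes_{\cO_K}K$ has good reduction modulo $\cM_L$ whenever $p_L>M$ (Remark~\ref{reduction}). For $L\in\tilde{\cL}_{K,M}$ and $\tilde L$ a characteristic-zero local field over $\cO_K$ with $k_{\tilde L}\simeq k_L$, Proposition~\ref{uniformlocal1} gives $|E_f(P,\psi)|\leq Cm_\psi^{n-1}q_L^{-m_\psi\sigma_P(\tilde L,J_f)/2}$. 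Since $q_L=|k_L|\geq p_L=p$ and $\sigma_P(\tilde L,J_f)\geq 0$, it is enough to show $\sigma_P(\tilde L,J_f)/2\geq\tilde\sigma_{P,\tilde L}$. By Definition~\ref{deflog}, Remark~\ref{reduction}, and the fact that $\lct_Q(J_f)=+\infty$ when $Q$ is not a critical point, $\sigma_P(\tilde L,J_f)$ is the minimum of $\lct_Q(J_f)$ over the critical points $Q\in\cO_{\tilde L}^n$ of $f$ with $\overline{Q}=P$ — exactly the set over which $\tilde\sigma_{P,\tilde L}$ is a minimum. Hence it suffices to establish, for each such $Q$,
$$\lct_Q(J_f)\geq\frac{n-s_Q}{d_Q-1}.\qquad(\star)$$

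To prove $(\star)$ I would translate $Q$ to the origin and write $g(x)=f(x+Q)-f(Q)=f_{d_Q}(x)+\cdots+f_d(x)$, with $f_j$ the homogeneous parts of $g$; then $\lct_Q(J_f)=\lct_0(J_g)$, and $f_{d_Q}\neq 0$ is homogeneous of degree $d_Q>1$ (there is no linear part, as $Q$ is critical) with $s_Q=\dim(\Sing(f_{d_Q}))$. Consider the family of ideals $\mathfrak{a}_b=\big(\sum_{j=d_Q}^{d}b^{\,j-d_Q}(\partial f_j/\partial x_i)(x)\big)_{1\leq i\leq n}$ parametrized by $b\in\AA^1$. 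For $b\neq 0$ one has $\mathfrak{a}_b=\big(b^{\,1-d_Q}(\partial f/\partial x_i)(Q+bx)\big)_{1\leq i\leq n}$; since $x\mapsto bx$ is an automorphism fixing the origin and $b^{1-d_Q}$ is a unit, $\lct_0(\mathfrak{a}_b)=\lct_0(J_g)=\lct_Q(J_f)$, whereas $\mathfrak{a}_0=J_{f_{d_Q}}$. Semicontinuity of the log-canonical threshold \cite[Property 1.24]{Mustata2} then yields $\lct_Q(J_f)\geq\lct_0(J_{f_{d_Q}})$. Finally, $\lct_0(J_{f_{d_Q}})\geq\frac{n-s_Q}{d_Q-1}$ is precisely inequality~(\ref{lowerbound}) from the proof of Corollary~\ref{uniformlocal2} with $f_{d_Q},d_Q,s_Q$ in place of $f_d,d,s$: one reduces to the case $s_Q=0$ by restricting $f_{d_Q}$ to a generic hyperplane $H$ through the origin and invoking \cite[Properties 1.12 and 1.17]{Mustata2} together with $J_{f_{d_Q}|_H}\subseteq J_{f_{d_Q}}|_H$, and in the case $s_Q=0$ the critical locus $Z(J_{f_{d_Q}})$ equals $\{0\}$ (a cone of dimension $s_Q=0$), so, exactly as in the proof of Corollary~\ref{uniformlocal2}, the description of the log-canonical threshold in \cite{ELM} gives $\lct_0(J_{f_{d_Q}})=\frac{n}{d_Q-1}$.

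Putting these together, $\sigma_P(\tilde L,J_f)/2=\tfrac12\min_Q\lct_Q(J_f)\geq\tfrac12\min_Q\frac{n-s_Q}{d_Q-1}=\tilde\sigma_{P,\tilde L}$, hence $q_L^{-m_\psi\sigma_P(\tilde L,J_f)/2}\leq q_L^{-m_\psi\tilde\sigma_{P,\tilde L}}\leq p^{-m_\psi\tilde\sigma_{P,\tilde L}}$, which is the assertion. The only genuine content beyond Proposition~\ref{uniformlocal1} is the inequality $(\star)$, and the single delicate point there will be that the rescaling must degenerate to the Jacobian ideal of the \emph{lowest}-degree part $f_{d_Q}$ of the Taylor expansion of $f$ at $Q$ — so the normalizing power is $b^{1-d_Q}$ and semicontinuity is applied as $b\to 0$ — together with the bookkeeping (via Remark~\ref{reduction}) that, for $p$ large, the minima defining $\sigma_P(\tilde L,J_f)$ and $\tilde\sigma_{P,\tilde L}$ range over the same set of critical points lying over $P$.
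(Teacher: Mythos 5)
Your proof is correct and follows essentially the same approach as the paper: reduce via Proposition~\ref{uniformlocal1}, Definition~\ref{deflog}, and Remark~\ref{reduction} to the pointwise bound $\lct_Q(J_f)\geq (n-s_Q)/(d_Q-1)$ for critical points $Q$ lying over $P$, and then establish that bound by the same reductions (translate to the origin, cut by a generic hyperplane to reach $s_Q=0$, arc-count) used for inequality~(\ref{lowerbound}) in Corollary~\ref{uniformlocal2}. The only (minor, correct) variant is in how the $s_Q=0$ case is settled: you make the degeneration to $J_{f_{d_Q}}$ explicit via the family $\mathfrak{a}_b$ and semicontinuity, correctly replacing the normalization $a^{d-1}$ from Corollary~\ref{uniformlocal2} by $b^{1-d_Q}$ so that the limit picks out the lowest-degree part, whereas the paper's own proof skips that step and computes $\lct_0(J_f)=n/(d_0-1)$ directly from the observation that $\ord_t(J_f(x))=(d_0-1)\,\ord_t(x)$ for arcs based at the critical point (since each $\partial f/\partial x_i$ has lowest-degree part $\partial f_{d_0}/\partial x_i$, with isolated common zero when $s_0=0$) together with the \cite{ELM} description.
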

\begin{proof}As in the proof of Corollary \ref{uniformlocal2}, it suffices to show that $\lct_0(J_f)=\frac{n}{d_{0}-1}$ if $0$ is a critical point of $f$ and $s_0=0$. Since $\dim(\Sing(f_{d_0}))=0$, if $x\in t\tilde{L}[[t]]^n$ then $\ord_t(J_f(x))>m$ iff $\ord_t(x)>[\frac{m}{d_0-1}]$. The same argument as in Corollary \ref{uniformlocal2} yields
$$\lct_0(J_f)=\frac{n}{d_{0}-1}.$$
\end{proof}
We need the following lemma to go from exponential sums at points to exponential sums associated with schemes.
\begin{lem}
\label{padic}Let $K$ be a number field. Let $f$ be a non-constant polynomial in $\cO_K[x_1,...,x_n]$. Let $H_1$ and $H_2$ be linear subspaces of $\AA^n$, of dimension $n-r$, resp.~dimension $r$, defined over $\cO_K$, and so that $H_1(\CC)+H_2(\CC)=\CC^n$.
Then there are $M>0$ and a nonzero polynomial $g$ in $r$ variables defined over $\cO_K$ such that for any $b\in H_2(\cO_K^{\rm alg})$, and $Q\in b+H_1(\cO_K^{\rm alg})$, any local field $L$ over
 $\cO_K[b,Q]$ with residue field characteristic at least $M$, and any multiplicative character $\chi$ on $\cO_L^{\times}$ with $c(\chi)=1$, if one has $\ord_L (g(b)) = 0$,
then
\begin{equation}\label{eq:H1H2}
\cZ_{\chi}(f,L,Q,s) = q_L^{-r} \cZ_{\chi}(f_{|b+H_1},L,Q,s).
\end{equation}
Furthermore, the degree of $g$ can be bounded in terms of  $d=\deg(f)$.
\end{lem}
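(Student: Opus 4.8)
The plan is to relate both zeta functions to a single log resolution of $(f)$, using that a generic affine slice parallel to $H_1$ restricts that resolution to a log resolution of the restricted polynomial with unchanged numerical data.

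First I would reduce to coordinate subspaces. After a linear change of coordinates defined over $\cO_K[1/N_0]$ for a suitable nonzero $N_0\in\cO_K$ — an isometry of $\cO_L^n$ once $p_L\nmid N_0$, hence only affecting $M$ — we may assume $H_1=\{x_{n-r+1}=\dots=x_n=0\}$ and $H_2=\{x_1=\dots=x_{n-r}=0\}$. Writing $x=(y,z)$ with $y=(x_1,\dots,x_{n-r})$ and $z=(x_{n-r+1},\dots,x_n)$ we then have $b=(0,\zeta)$, $Q=(\eta,\zeta)$ and $f_{|b+H_1}(y)=f(y,\zeta)$; both sides of the claimed identity transform equivariantly, and $\deg g$ is unchanged by precomposition with the linear map. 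Observe already that when $\overline f(\overline Q)\neq 0$ the polynomial $f$ is a unit on all of $\overline Q+\cM_L^n$, so $\cZ_\chi(f,L,Q,s)=q_L^{-n}\overline{\chi}(\overline f(\overline Q))$ and $\cZ_\chi(f_{|b+H_1},L,Q,s)=q_L^{-(n-r)}\overline{\chi}(\overline f(\overline Q))$, and the identity is immediate; so the content lies near the reduction of $\{f=0\}$, where one must control the whole local zeta function rather than a single coefficient.

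Next I would fix a log resolution $h\colon X'\to\AA^n_K$ of the ideal $(f)K[x]$, taken from the finite family of resolutions underlying Remark~\ref{exp1}, so that the degrees of the defining equations of $X'$, of the components $E_i$ of the attached normal crossings divisor, and of $h$ are bounded in terms of $n$ and $d$; write $(N_i,\nu_i)_{i\in\cT}$ for the numerical data and let $M_1$, depending only on $n,d$, be such that $h$ has good reduction modulo $\cM_L$ for $p_L\geq M_1$. Let $\rho\colon X'\to\AA^r_K$ be the composition of $h$ with the projection onto the $z$-coordinates, so that its fibres are the strict transforms of the slices $\{z=\zeta\}$. By generic smoothness in characteristic zero there is a proper closed subset $B\subset\AA^r_K$, of degree bounded in terms of $n,d$, off which the restriction of $\rho$ to $X'$, to the strict transform of $Z(f)$, and to every stratum $E_J^{\circ}:=\bigcap_{i\in J}E_i\setminus\bigcup_{i\notin J}E_i$ ($J\subseteq\cT$) is smooth over $\zeta$; equivalently, for $\zeta\notin B$ the map $h^{-1}(\{z=\zeta\})\to\{z=\zeta\}$ is a log resolution of $(f_{|b+H_1})$ whose numerical data is exactly the restriction of $(N_i,\nu_i)_{i\in\cT}$ and whose strata are the $E_J^{\circ}\cap h^{-1}(\{z=\zeta\})$. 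I let $g\in\cO_K[z]$ be a nonzero polynomial, of degree bounded in terms of $n,d$ (hence of $d$), whose zero locus contains $B$ together with the finitely many additional bad loci needed so that $\ord_L(g(b))=0$ also forces $Z(f)\cap\{z=\zeta\}=Z(f_{|b+H_1})$ scheme-theoretically and keeps the reduced slice $\{z=\overline\zeta\}$ away from the loci of bad behaviour after reduction.

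Finally, put $M:=\max(M_1,M_2)$ with $M_2$ depending only on $f$ and bounding $N_0$ as well as every prime of bad reduction for the data above, and assume $p_L\geq M$, $c(\chi)=1$ and $\ord_L(g(b))=0$. Since $g\in\cO_K[z]$ and $b=(0,\zeta)$, this means $g(\overline\zeta)\neq 0$ in $k_L$, so the reduced slice $\{z=\overline\zeta\}$ is transverse to the good reduction $\overline h$ of $h$, and $\overline h^{-1}(\{z=\overline\zeta\})\to\{z=\overline\zeta\}$ is a good log resolution of the reduction of $f_{|b+H_1}$ with the same pairs $(N_i,\nu_i)$ on the components it meets. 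Now apply Denef's explicit formula (see \cite{DenefBour,Denefdegree}) for $\cZ_\chi(f,L,Q,s)$ in terms of $\overline h$: it is a finite sum over those $J\subseteq\cT$ with $\overline h^{-1}(\overline Q)\cap\overline{E_J^{\circ}}\neq\emptyset$ of a product of the number of $k_L$-points of a reduced stratum lying over $\overline Q$, an explicit rational function of $q_L^{-s}$ built from $(N_i,\nu_i)_{i\in J}$, and Gauss-sum factors depending only on $\chi$ and $\overline\chi$; and likewise for $\cZ_\chi(f_{|b+H_1},L,Q,s)$ using $\overline h^{-1}(\{z=\overline\zeta\})$. Because $\overline Q\in\{z=\overline\zeta\}$ we have $\overline h^{-1}(\overline Q)\subseteq\overline h^{-1}(\{z=\overline\zeta\})$, so for each $J$ the fibre $\overline h^{-1}(\overline Q)\cap\overline{E_J^{\circ}}$ is literally the same set whether the strata are intersected inside $X'_{k_L}$ or inside $\overline h^{-1}(\{z=\overline\zeta\})$: the contributing indices $J$, the numerical-data factors, and the Gauss-sum factors all agree. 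The sole discrepancy is the normalisation of the Haar measure, dividing the point counts by $q_L^{n}$ on the left and by $q_L^{\,n-r}$ on the right, which yields $\cZ_\chi(f,L,Q,s)=q_L^{-r}\cZ_\chi(f_{|b+H_1},L,Q,s)$. The delicate point — and essentially the only one demanding real work — is the genericity underlying the construction of $g$: that for the fixed resolution $h$ one can take $g$ of degree controlled by $d$ with the property that $\{z=\zeta\}$ is simultaneously transverse to every stratum of $h$ whenever $g(\zeta)\neq 0$, and that this transversality persists after reduction modulo $\cM_L$ for all sufficiently large $p_L$.
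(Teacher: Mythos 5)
Your proof takes a genuinely different route from the paper's. The paper's own proof is essentially a citation: it says the lemma is ``a variant of \cite[Lemma 4.1]{Cigumodp}, with similar proof based on Denef-Pas cell decomposition \cite{Pas} and the study of parameter integrals of \cite{CLoes}'' --- a model-theoretic, definability-based argument. You instead give a purely geometric argument: fix one log resolution $h$ of $(f)$, use generic smoothness to produce a $g$ off whose zero locus the slice $\{z=\zeta\}$ is transversal to every stratum of $h$, so that $h$ restricts to a log resolution of $f_{|b+H_1}$ with inherited numerical data, and then read off the identity from Denef's explicit formula, where the only difference between the two sides is the ambient Haar-measure normalization $q_L^{-n}$ versus $q_L^{-(n-r)}$, since the fiber $\overline{h}^{-1}(\overline{Q})$, the numerical data, and the reductions of the unit factors are literally the same for both integrals.

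What each approach buys: the cell-decomposition route is insensitive to resolution of singularities, and the degree bound on $g$ and the uniformity in $b$, $Q$, $\chi$, $L$ come essentially for free from uniform definability; this is why the paper can dispose of the degree claim in one sentence. Your route makes the factor $q_L^{-r}$ conceptually transparent as a pure measure-normalization phenomenon, but it shifts all the work onto (i) producing a log resolution of complexity bounded in $n,d$ (available from resolutions in families, as in Remark~\ref{exp1}), (ii) a generic-smoothness plus spreading-out argument showing that a single $g\in\cO_K[z]$ of bounded degree forces simultaneous transversality of the reduced slice to all strata for every $L$ with $p_L\geq M$, and (iii) a check that Denef's formula for $c(\chi)=1$ depends only on the fiber over $\overline Q$, on $(N_i,\nu_i)$, and on $\overline{\chi}$ applied to reductions of the unit parts --- which it does. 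You correctly flag (ii) as the one step demanding real work; it is exactly the type of uniformity-after-reduction statement that cell decomposition absorbs automatically. Two small imprecisions worth fixing: Denef's formula for the zeta function with $c(\chi)=1$ has no Gauss-sum factors --- only $\overline\chi$-twisted geometric series and values of $\overline\chi$ at unit reductions; Gauss sums enter only when passing to the exponential sum via Proposition~\ref{Fourier}. And your construction gives $\deg g$ bounded in terms of $n$ and $d$, not $d$ alone, though the paper's statement carries the same implicit dependence on $n$.
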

\begin{proof}
This is a variant of \cite[Lemma 4.1]{Cigumodp}, with similar proof based on Denef-Pas cell decomposition \cite{Pas} and the study of parameter integrals of \cite{CLoes}, which works uniformly for big enough $p_L$ since $c(\chi)=1$. To see our claim on the degree of $g$, we can proceed similarly as in \cite[Lemma 4.1]{Cigumodp} but with more parameters come from coefficients defining $f,H_1,H_2$.
\end{proof}
\begin{prop}\label{generalZ}Let $Z$ be a subscheme of $\AA_{\cO_K}^n$ whose complexity is bounded by $(R,D)$. Moreover, suppose that $f(Z(\CC))=0$. There exist a constant $M$ depending only on $Z,f$ and a constant $C$ depending only on $n, d, R, D$ such that for all local fields $L\in\tilde{\cL}_{K,M}$ and all additive characters $\psi$ of $L$ of conductor $m_\psi\geq 2$, we have
$$|E_{f}(Z,\psi)|\leq Cm_\psi^{n-1}p^{\frac{-m_\psi(n-s)}{2(d-1)}}.$$
\end{prop}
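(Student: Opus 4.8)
The plan is to reduce the statement about an arbitrary subscheme $Z$ to the already-established estimate for exponential sums at points, Corollary \ref{uniformlocal2}, by a geometric decomposition of $Z$ combined with Lemma \ref{padic} and Proposition \ref{Fourier}. First I would fix a local field $L\in\tilde\cL_{K,M}$ and an additive character $\psi$ of conductor $m=m_\psi\geq 2$, and write
\begin{equation}\label{eq:Zsum}
E_f(Z,\psi)=q_L^{-n}\sum_{P\in Z(k_L)}q_L^{n}E_f(Z\cap(P+\cM_L^n),\psi),
\end{equation}
i.e.\ decompose the residue disc $\{x\in\cO_L^n:\bar x\in Z(k_L)\}$ into the fibres over the $k_L$-points of $Z$. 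Since $f(Z(\CC))=0$ we have $f(P)\in\cM_L$ for $P\in Z(k_L)$, so by Proposition \ref{ex-zeta} (applied with $M$ enlarged to absorb the relevant log-resolution data) the only points $P$ contributing are those lying on $C_f$; for the others the local contribution vanishes once $p_L>M$. Thus
$$|E_f(Z,\psi)|\leq \#\bigl((Z\cap C_f)(k_L)\bigr)\cdot\max_{P}|E_f(\{P\},\psi)|,$$
where the max is over $P\in(Z\cap C_f)(k_L)$, and here $E_f(\{P\},\psi)$ means the integral over the disc above $P$, which is exactly $q_L^{-n}E_f(P,\psi)$ in the notation of the paper.

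The term $\#((Z\cap C_f)(k_L))$ is $O(q_L^{\dim(Z\cap C_f)})$ with implied constant depending only on $n,d,R,D$ (a Lang--Weil / Noether-type bound for a scheme of bounded complexity, valid once $p_L\geq M$); write $e=\dim(Z\cap C_f)\leq n$. For the local factor I would invoke Corollary \ref{uniformlocal2}, which gives $|E_f(P,\psi)|\leq Cm^{n-1}q_L^{-m(n-s)/(2(d-1))}$, and hence $|E_f(\{P\},\psi)|\leq Cm^{n-1}q_L^{-n-m(n-s)/(2(d-1))}$. Combining,
$$|E_f(Z,\psi)|\leq C'\,m^{n-1}\,q_L^{\,e-n-m(n-s)/(2(d-1))}\leq C'\,m^{n-1}\,q_L^{-m(n-s)/(2(d-1))},$$
using $e\leq n$ and $m\geq 2\geq 1$ so the extra factor $q_L^{e-n}\leq 1$ is harmless. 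This is stronger than the claimed bound (which only asks for the exponent $-m(n-s)/(2(d-1))$ without the $m^{n-1}$ penalty being an obstruction), so one could even absorb the polynomial-in-$m$ factor into the exponent at the cost of shrinking the rate by $\epsilon$, but as stated the inequality with the $m_\psi^{n-1}$ prefactor is exactly what is wanted.

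The main obstacle is making the two uniformity claims genuinely uniform in $L$ and in $Z$ simultaneously: (i) that the cardinality bound $\#((Z\cap C_f)(k_L))=O(q_L^{e})$ has a constant depending only on $n,d,R,D$, and (ii) that the threshold $M$ beyond which both Proposition \ref{ex-zeta} and Corollary \ref{uniformlocal2} apply can be chosen depending only on $f$ and $Z$ (hence, via Remark \ref{exp1}, on numerical data controlled by $n,d$) rather than on $L$. For (i) I would use the stratification of $Z\cap C_f$ by a log-resolution of families (as in Remark \ref{exp1}, citing \cite{Mustata2}) together with a uniform point-count for constructible sets of bounded complexity over finite fields; this is where the hypotheses ``complexity bounded by $(R,D)$'' and ``$f(Z(\CC))=0$'' are both used. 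For (ii) the key point is that $C_f$ and the Jacobian ideal $J_f$ are determined by $f$ alone, so the log-resolution of $J_f$ in Remark \ref{reduction} and its reduction threshold depend only on $f$; intersecting with $Z$ only adds finitely many further bad primes depending on $Z$. An alternative, cleaner route to both points is to replace \eqref{eq:Zsum} by the slicing argument of Lemma \ref{padic}: choose generic complementary linear subspaces $H_1,H_2$ and integrate fibrewise, reducing the ambient dimension and the complexity of $Z$ step by step until one lands on zero-dimensional slices, at which stage Corollary \ref{uniformlocal2} applies directly; the polynomial $g$ of Lemma \ref{padic} controls the (finitely many, $Z$-dependent) bad reductions and its degree is bounded in terms of $d$, which keeps the constants uniform. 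I would present the proof via this slicing approach, as it dovetails with the constructible-motivic-functions philosophy of \cite{Cigumodp} mentioned in the introduction and sidesteps the need for an ad hoc uniform Lang--Weil estimate.
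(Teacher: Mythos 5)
Your primary route --- the direct decomposition of $E_f(Z,\psi)$ over $Z(k_L)$ --- has a genuine gap, hidden by a bookkeeping error. In the paper's notation $E_f(\{P\},\psi)$ and $E_f(P,\psi)$ are by definition the \emph{same} quantity, namely the unnormalized integral $\int \mathbf{1}_{\{x\in\cO_L^n:\overline{x}=P\}}\psi(f(x))\,|dx|$ over the residue disc above $P$ (a set already of Haar measure $q_L^{-n}$); your identity ``$E_f(\{P\},\psi)=q_L^{-n}E_f(P,\psi)$'' therefore introduces a spurious factor of $q_L^{-n}$, and Corollary \ref{uniformlocal2} bounds $E_f(P,\psi)=E_f(\{P\},\psi)$ with no hidden $q_L^{-n}$ to absorb. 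Correcting this, your chain yields $|E_f(Z,\psi)|\leq C'\,q_L^{e}\,m^{n-1}q_L^{-m(n-s)/(2(d-1))}$ with $e=\dim(Z\cap C_f)$, and the Lang--Weil factor $q_L^{e}$ is uncontrolled once $e>0$; the required bound does not follow. Your argument as written is in fact exactly the base case $e=0$ of the paper's induction, where the point count is $O(1)$; without a mechanism producing a compensating negative power of $q_L$, the naive decomposition cannot handle positive-dimensional $Z\cap C_f$.

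The slicing route you mention at the end is indeed what the paper does, and the point you leave implicit is precisely the missing compensation: Lemma \ref{padic} (via formula (\ref{eq:H1H2})) gives $E_f(Z_1,\psi)=q_L^{-\dim Z_1}\sum_{\overline{Q}\in Z_1(k_L)}E_{f|_{Q+H_1}}(Q,\psi)$, and the prefactor $q_L^{-\dim Z_1}$ cancels $|Z_1(k_L)|=O(q_L^{\dim Z_1})$. Two further points differ from your sketch. First, $H_1$ must be chosen so that $f_d|_{H_1}\neq 0$ and $\dim(\Sing(f_d|_{H_1}))=s-r$ where $r=\dim Z_1$; then $(n-r)-(s-r)=n-s$, so the exponent is preserved under slicing. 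Second, one does not apply Corollary \ref{uniformlocal2} to the sliced sums as a black box: the paper establishes a family version of Proposition \ref{sumsfinite2} for $(f|_{Q+H_1})_{\overline{Q}\in Z_1(k_L)}$, built from common log-resolutions of $(f|_{Q+H_1})_Q$ and of $(J_{f|_{Q+H_1}})_Q$ with good reduction uniform over $Z_1$, combined with Propositions \ref{transfer1}, \ref{transfer2} and Remark \ref{exp1}; the residual locus $Z'\cup Z''$, of strictly smaller dimension, is then handled by induction rather than by iterating the slicing down to zero-dimensional fibres.
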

\begin{proof}
 We follow a strategy from \cite{Cigumodp}, and use induction on the dimension of the intersection of $Z$ with the singular locus of $f$.
If the intersection of $Z$ with the singular locus of $f$ has dimension zero, there is nothing to prove by Proposition \ref{ex-zeta} and Corollary \ref{uniformlocal2}. Now let $Z$ be general with $f(Z(\CC))=0$. We may suppose that $Z$ is contained in the singular locus of $f$, again since the exponential sum vanishes away from the singular locus as in Proposition \ref{ex-zeta}. Thus we can suppose that the dimension of $Z$ is $r\leq \dim(\Sing(f))$. Note the easy fact that $\dim(\Sing(f))\leq \dim(\Sing(f_d))=s$ and thus $r\leq s$. There exist a closed sub-variety $Z''$ defined over $\cO_K$ of $Z$ of codimension at least $1$, and a linear subspace $H_1$ defined over $\cO_K$ of dimension $n-r$, such that $f_d|_{H_1}\neq 0$, $\dim(\Sing(f_d|_{H_1}))=s-r$ and the set
of $\CC$-rational points on $(Z\setminus Z'')\cap (a+H_1)$  is finite for any $a\in Z(\CC)\setminus Z''(\CC)$, and such that there is a linear subspace $H_2$ also defined over $\cO_K$ such that $H_1(\CC)+H_2(\CC)=\CC^n$, the sum being direct.

By Lemma \ref{padic}, there is a closed subscheme $Z'$ of $Z$, also defined over $\cO_K$, such that the relation (\ref{eq:H1H2}) between $\cZ_{\chi}(f,L,Q,s)$ and $\cZ_{\chi}(f_{|Q+H_1},L,Q,s)$ holds for any $Q$ as soon as ${\overline {Q}}\in Z(k_L)\setminus Z'(k_L)$ and $p_L>M_0$. Let $g$ be as in Lemma \ref{padic}. Because of the conditions that $(Z\setminus Z'')\cap (a+H_1)$ is finite for any $a\in Z(\CC)\setminus Z''(\CC)$, we have $\dim(Z')\leq \max(\dim(Z''),\dim(Z(g))=r-1<\dim(Z)$. Put $Z_1=Z\setminus (Z'\cup Z'')$. If $p_L>M_0$ then by (\ref{eq:H1H2}) we have
\begin{align*}
\cZ_{\chi}(f,L,Z_1,s) &= \cZ_{\chi}(f,L,Z,s) - \cZ_{\chi}(f,L,Z'\cup Z'',s)\\
&=q_L^{-\dim(Z_1)}\sum_{\overline{Q}\in Z_1(k_L) }\cZ_\chi(f_{|Q+H_1},L,Q,s).
\end{align*}

We can shrink $Z_1$ such that $\dim(Z\setminus Z_1)<\dim(Z)$, $Z_1$ is defined over $\cO_K$ and there exists a common log-resolution $h_1$ defined over $K$ of singularities of family  $(f|_{Q+H_1})_{Q\in Z_1}$. By the definition of good reduction, there exist regular functions $g_1,..,g_e$ on $Z_1$ defined over $K$ such that if $L$ is a local field over $\cO_K$ of characteristic zero and $h_1,g_1,...,g_e$ are defined over $\cO_L$ then the specialization $h_{1Q}$ of $h_1$ at $Q\in Z_1(L)\cap \cO_L^n$ has good reduction modulo $\cM_L$ iff $\overline{g}_i(\overline{Q})\neq 0$ for all $1\leq i\leq e$. For each point $Q\in Z_1(\overline{K})$ and $L\in\cL_{K,1}$ such that $Q\in \cO_L^n$ then $h_{1Q}$ will have good reduction modulo $\cM_L$ if $p_L$ is large enough. Thus we see that $g_1,...,g_e$ must be invertible functions on $Z_1(\CC)$. There exists an integer $M_1>M_0$ such that $h_1,g_1,...,g_e$ are defined over $\cO_L$ if $L\in\cL_{K,M_1}$. By logical compactness (see for instance \cite[Corollary 2.2.10]{Marker}), we can enlarge $M_1$ such that $\overline{g}_1,...,\overline{g}_e$ are invertible functions on $Z_1(\overline{\FF}_p)$ if $p>M_1$. Therefore,  if $L\in\cL_{K,M_1}$ and $Q\in Z_1(L)\cap \cO_L^n$, then  $h_{1Q}$ has good reduction modulo $\cM_L$ if $\overline{Q}\in Z_1(k_L)$. By enlarging $M_1$ if needed, $h_{1Q}$ has tame good reduction modulo $\cM_L$ if $L\in\cL_{K,M_1}$ and $\overline{Q}\in Z_1(k_L)$. Thus Propositions \ref{Fourier},  \ref{ex-zeta}, \ref{transfer1} and Remark \ref{expM1} yield
$$E_{f}(Z_1,\psi)=q_L^{-\dim(Z_1)}\sum_{\overline{Q}\in Z_1(k_L)}E_{f_{|Q+H_1}}(Q,\psi)$$
for all local fields $L\in\tilde{\cL}_{K,M_1}$ and all additive characters $\psi$ of $L$ of conductor $m_\psi\geq 2$. 

By shrinking $Z_1$ again,  we may suppose that there exists a common log-resolution $h_2$ of singularities of family  $(J_{f|_{Q+H_1}})_{Q\in Z_1}$. By the same reason as above, we also have an integer $M_2>M_1$ such that if $L\in\cL_{K,M_2}$, $Q\in Z_1(L)\cap \cO_L^n$ and $\overline{Q}\in Z_1(k_L)$ then the specialization $h_{2Q}$ of $h_2$ at $Q$ will have good reduction modulo $\cM_L$. Thus the discussion in Remark \ref{reduction1} implies a uniform version of Proposition \ref{sumsfinite2} for the family of polynomials $(f|_{Q+H_1})_{\overline{Q}\in Z_1(k_L)}$ if $p_L>M_2$. By combining this uniform version with  Proposition \ref{transfer1} and the proof of Corollary \ref{uniformlocal2}, we deduce that for each $m\geq 2$ there exist a constant $M_m$ depending only on $f,Z,m$ and a constant $B_m$ depending only on $m,n,d$ such that for all local  fields $L\in\tilde{\cL}_{K,M_m}$, all additive characters $\psi$ of $L$ of conductor $m$ and all $Q$ with $\overline{Q}\in Z_1(k_L)$, we have
$$|E_{f_{|Q+H_1}}(Q,\psi)|\leq B_mq_L^{\frac{-m(n-s)}{2(d-1)}}.$$
Thus for all local fields $L\in\tilde{\cL}_{K,M_m}$ and all additive characters $\psi$ of $L$ of conductor $m\geq 2$, we have
\begin{align*}
|E_{f}(Z_1,\psi)|&=q_L^{-\dim(Z_1)}\sum_{\overline{Q}\in Z_1(k_L)}E_{f_{|Q+H_1}}(Q,\psi)\\
&\leq |Z_1(k_L)|q_L^{-\dim(Z_1)}B_m q_L^{\frac{-m(n-s)}{2(d-1)}}\\
&\leq c_ZB_m q_L^{\frac{-m(n-s)}{2(d-1)}}.
\end{align*}
for a constant $c_Z$, by the Lang-Weil estimate. Note that $c_Z$ depends only on the complexity of $Z$ (see  for instance \cite{Katz-Bet}). Because of Proposition \ref{transfer2} and Remark \ref{exp1}, there exist a constant $M$ depending only on $Z,f$  and a constant $C$ depending only on $n, d$ together with the complexity of $Z$ such that for all local fields $L\in\tilde{\cL}_{K,M}$ and all additive characters $\psi$ of $L$ of conductor $m_\psi\geq 2$, we have
$$|E_{f}(Z_1,\psi)|\leq Cm_\psi^{n-1}q_L^{\frac{-m_\psi(n-s)}{2(d-1)}}.$$
This reduces the problem to $Z'\cup Z''$ instead of $Z$. Since Lemma \ref{padic} and our argument on log-resolutions of families, the complexity of $Z'\cup Z''$ can be bounded in terms of the complexity of $f,Z$. Hence, our claim follows by induction on the dimension.
\end{proof}
In fact, we can remove the term  $m_\psi^{n-1}$ in the conclusion of Proposition \ref{generalZ}. 
\begin{prop}\label{generalZ1}Let $Z$ be a subscheme of $\AA_{\cO_K}^n$ whose complexity is bounded by $(R,D)$. Suppose that $f(Z(\CC))=0$. There exist a constant $M_3$ depending only on $f,Z$ and a constant $C_3$ depending only on $n,d, R, D$ such that for all local fields $L\in\tilde{\cL}_{K,M_3}$ and all additive characters  $\psi$ of $L$ of conductor $m_\psi\geq 2$, we have
$$|E_{f}(Z,\psi)|\leq C_3q_L^{\frac{-m_\psi(n-s)}{2(d-1)}}.$$
\end{prop}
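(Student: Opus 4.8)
The plan is to revisit the argument of Proposition \ref{generalZ} and to isolate precisely where the loss $m_\psi^{n-1}$ is incurred, namely in the passage (via Proposition \ref{transfer2}, ultimately \cite[Corollary 5.5]{NguyenVeys} and \cite[Claim 3.2.7]{Saskia-Kien}) from bounds at each fixed conductor to a bound uniform in the conductor, which accumulates a polynomial factor whose degree is controlled by the multiplicity of the relevant pole of the Igusa zeta function. First I would reduce to the case of large conductor: for $m_\psi$ in any fixed finite range the factor $m_\psi^{n-1}$ is bounded and can be absorbed into the constant, using Proposition \ref{generalZ} itself; it therefore remains to bound $|E_f(Z,\psi)|$ by a constant multiple of $q_L^{-m_\psi(n-s)/(2(d-1))}$ for $m_\psi$ above an explicit threshold depending only on $n,d$ (the threshold coming, as in Remark \ref{exp1}, from the finite set of numerical data of log-resolutions of polynomials of degree $d$ in $n$ variables).

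The key point is that the point-wise estimates feeding the induction carry a \emph{strict} gain over the target exponent $(n-s)/(2(d-1))$. Indeed, Proposition \ref{sumsfinite1} bounds the finite-field sum attached to a critical point $P$ at level $m$ not by $B_m q_L^{-m\sigma_{P}(L,J_f)/2}$ but by $B_m q_L^{-((m-1)\sigma_{P}(L,J_f)+n)/2}$; combining the lower bound $\sigma_{P}(L,J_f)\ge (n-s)/(d-1)$ obtained in the proof of Corollary \ref{uniformlocal2} with the upper bound $\sigma_{P}(L,J_f)\le n$ of \cite[Property 1.14]{Mustata2}, this is at most $B_m q_L^{-m(n-s)/(2(d-1))}$ times an extra factor $q_L^{-\theta_0}$ with $\theta_0=\frac12\bigl(n-\frac{n-s}{d-1}\bigr)=\frac{n(d-2)+s}{2(d-1)}$. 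For $d\ge 3$ one has $\theta_0>0$; the excluded case $d=2$ I would treat separately and directly, since then $f$ is quadratic and, after completing the square, $E_f(Z,\psi)$ is a product of at most $n$ one-variable Gauss-sum type integrals whose explicit evaluation carries no polynomial-in-$m_\psi$ factor. I would then run the induction on $\dim(Z\cap\Sing f)$ exactly as in Proposition \ref{generalZ}: over the generic stratum $Z_1$, Propositions \ref{Fourier}, \ref{ex-zeta} and \ref{transfer1} express $E_f(Z_1,\psi)$ at level $m$ as a sum of at most $c_Z q_L^{\dim(Z_1)}$ point-wise sums $E_{f_{|Q+H_1}}(Q,\psi)$, each of which now obeys the estimate with the extra decay $q_L^{-\theta_0}$; and this extra decay is exactly what converts the polynomial accumulation in the transfer step into a convergent geometric series over the $\asymp m_\psi$ conductor-reduction steps, so that, re-doing \cite[Claim 3.2.7]{Saskia-Kien} with the gain recorded at each step, one obtains $\sum_{j\ge 0}(c\,q_L^{-\theta_1})^{j}\,q_L^{-m_\psi(n-s)/(2(d-1))}\le C_3\,q_L^{-m_\psi(n-s)/(2(d-1))}$ once $q_L$ exceeds a bound depending only on $n,d$. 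All constants are tracked as in Proposition \ref{generalZ} to depend only on $n,d,R,D$, and $M_3$ only on $f,Z$.

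The step I expect to be the main obstacle is guaranteeing that the gain stays bounded away from $0$, in terms of $n$ and $d$ alone, at \emph{every} level of the double induction (on the dimension of $Z\cap\Sing f$ and on the conductor): when one restricts to $Q+H_1$ and the dimension of $\Sing\bigl((f|_{H_1})_d\bigr)$ drops, one must re-derive the analogue of $\theta_0>0$ for the restricted polynomial, and one must dispose cleanly of the degenerate locus where the gain vanishes — which, as noted, occurs only in degree $d=2$ and is handled by the classical evaluation. A further subtlety is that Proposition \ref{transfer2} cannot be invoked as a black box here, since its statement already contains $m_\psi^{n-1}$; instead one must re-enter its proof and that of \cite[Claim 3.2.7]{Saskia-Kien} and check that the level-by-level bookkeeping of the gains is compatible with the specialization from positive to zero residue field characteristic supplied by Proposition \ref{transfer1}, and, along the way, that carrying $q_L$ rather than $p_L$ through the Lang--Weil and Katz estimates produces the sharper form stated here with $q_L$.
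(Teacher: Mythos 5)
Your scaffolding — absorb small conductors via Proposition \ref{generalZ}, treat $d=2$ by explicit Gauss sums, and exploit a strict gain in the exponent for large $m_\psi$ — matches the paper's plan, but the gain you identify is of the wrong kind and cannot remove $m_\psi^{n-1}$. The quantity $\theta_0=\tfrac{n(d-2)+s}{2(d-1)}$ you extract from Proposition \ref{sumsfinite1} is an additive \emph{offset}: the per-level bound is $B_m q_L^{-m\tau-\theta_0}$ with $\tau=\tfrac{n-s}{2(d-1)}$, but the coefficient of $m$ in the exponent (the slope) is still $\tau$. The factor $m_\psi^{n-1}$ in Proposition \ref{transfer2} and \cite[Claim 3.2.7]{Saskia-Kien} is not an accumulated loss over conductor-reduction steps that a per-step discount could telescope away; it is an intrinsic feature of the coefficients of a rational function (the Igusa zeta function) whose poles may have multiplicity up to $n$, and its degree is governed by the multiplicity and location of the dominant pole, that is, by the slope. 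Since your input constants $B_m$ are allowed to grow with $m$ (they come from Katz's bound and depend on $m$), a bound of the form $B_m q_L^{-m\tau-\theta_0}$ does not rule out a pole at $\Re(s)=-\tau$ of full multiplicity, and the transfer cannot conclude better than $C\,m_\psi^{n-1}q_L^{-m_\psi\tau}$. For fixed $q_L$, the constant $q_L^{-\theta_0}$ cannot defeat the unbounded polynomial $m_\psi^{n-1}$, so the asserted geometric series $\sum_j (cq_L^{-\theta_1})^j$ never materializes from the mechanism you describe.

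What the paper actually secures is a strict gain in the \emph{slope}. Repeating the argument of Lemma \ref{boundsing} and \cite[Proposition 5.2]{CKMu} for the ideal $(f)+J_f^2$ gives $|E_f(Z,\psi)|\leq C_1 m_\psi^{n-1}q_L^{-(m_\psi-1)\sigma_Z(\tilde L,(f)+J_f^2)}$, and the decisive step — missing from your proposal — is the strict inequality $\sigma_Z(\tilde L,(f)+J_f^2)>\tau$ for $d>2$, which reduces to $\lct_0(f_d,J_{f_d}^2)>\tfrac{n}{2(d-1)}$ when $\dim\Sing(f_d)=0$ and is proved by a jet-scheme codimension comparison via \cite[Corollary 3.4]{MustJAMS}. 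Once the slope gap $\sigma-\tau>0$ is in hand, the exponent gap $(m_\psi-1)\sigma-m_\psi\tau$ grows linearly in $m_\psi$, and $C_1m_\psi^{n-1}$ is absorbed for all $m_\psi\geq m_0$ provided $p_L$ exceeds a bound depending on $f$; the threshold $m_0$, hence $C_3$, depends only on $n,d,R,D$ because the set of possible values $\lct_P((g)+J_g^2)$ for $\deg g\leq d$ is finite by the argument of \cite[Property 1.23]{Mustata2}. Your estimates built on $J_f$ alone deliver only $\sigma_P(L,J_f)\geq\tfrac{n-s}{d-1}$ without strictness, which is precisely why the gain you found shows up as an offset rather than a slope.
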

\begin{proof}
We use the proof of \cite[Proposition 5.2]{CKMu} and repeat the proof of Lemma \ref{boundsing} for the ideal $(f)+J_f^2$ to see that there exist a constant $M_1$ depending only on $f$ and a constant $C_1$ depending only on $n,d,R,D$ such that for all local fields $L\in\tilde{\cL}_{K,M_1}$ and all additive characters  $\psi$ of $L$ of conductor $m_\psi\geq 2$, we have
\begin{equation}\label{Jff}
|E_{f}(Z,\psi)|\leq C_1m_\psi^{n-1}q_L^{-(m-1)\sigma_Z(\tilde{L}, (f)+J_f^2)},
\end{equation}
where  $\tilde{L}$ is a local field over $\cO_K$ of characteristic $0$ with $k_{\tilde{L}}\simeq k_L$ and
$$\sigma_Z(\tilde{L}, (f)+J_f^2)=\min_{Q\in \cO_{\tilde{L}}^n,\overline{Q}\in Z(k_{\tilde{L}})}\lct_Q((f)+J_{f}^2).$$
If $d=2$, we can suppose that $f(0)=0$.  By changing of coordinates, we can suppose that $f=x_1^2+...+x_{n-s}^2$. As in the proof of Proposition \ref{generalZ}, we can suppose that $Z\subset \Sing(f)$ thus we can suppose that $Z=\{0\}\times Z_1$ for some subscheme $Z_1$ of $\AA_{\cO_K}^{s}$. The integral form of $E_f(Z,\psi)$ yields our claim.

Let us suppose that $d>2$, $f(0)=0$ and $0$ is a critical point of $f$. We will show that 
\begin{equation}\label{strictbound0}
\sigma_Z(\tilde{L}, (f)+J_f^2)>\frac{n-s}{2(d-1)}.
\end{equation} 
Because of the definition of $\sigma_Z(\tilde{L}, (f)+J_f^2)$, it is sufficient to show that
\begin{equation}\label{strictbound}
\lct_0(f,J_{f}^2)>\frac{n-s}{2(d-1)}.
\end{equation}
As in the proof of Corollary \ref{uniformlocal2}, we have $$\lct_0(f,J_f^2)\geq \lct_0(f_d,J_{f_d}^2),$$
moreover, we only need to prove that
$$\lct_0(f_d,J_{f_d}^2)>\frac{n}{2(d-1)}$$ 
if $\dim(\Sing(f_d))=0$. In this case, suppose that $\lct_0(f_d,J_{f_d}^2)=\lct_0(J_{f_d}^2)=n/(2(d-1))$. Since  $\dim(\Sing(f_d))=0$ we remark again that  if $x\in t\tilde{L}[[t]]^n$ then  $\ord_t(J_{f_d}^2(x))>m$ iff $\ord_t(x)>m/(2(d-1))$. Therefore if $d>2$ then for all large enough $m$ we have
\begin{align*}
&\dim(\{x\in (t\tilde{L}[[t]]/(t^{m+1}))^n|\ord_t(f_d,J_{f_d}^2)(x)>m\})\\&<\dim(\{x\in (t\tilde{L}[[t]]/(t^{m+1}))^n| \ord_t(J_{f_d}^2(x))>m\}).
\end{align*}
We can use \cite[Corollary 3.4]{MustJAMS} to get a contradiction. Thus (\ref{strictbound0}) holds.

Let $m_0$ be the smallest integer such that $$m_0(\sigma_Z(\tilde{L}, (f)+J_f^2)-\frac{n-s}{2(d-1)})>\sigma_Z(\tilde{L},(f)+J_f^2).$$ Inequality (\ref{Jff}) and the definition of $m_0$  imply that there exists an integer $M_2$ depending only on $C_1,f$ such that 
for all local fields $L\in\tilde{\cL}_{K,M_2}$ and all additive characters  $\psi$ of $L$ of conductor $m_\psi\geq m_0$, we have
\begin{equation}\label{simplify}
|E_{f}(Z,\psi)|\leq q_L^{\frac{-m_\psi(n-s)}{2(d-1)}} 
\end{equation}
provided that the complexity of $Z$ is bounded by $(R,D)$.

Let us fix a scheme $Z$ whose complexity bounded by $(R,D)$ and let $M,C$ be as in Proposition \ref{generalZ} with respect to $Z$. We can finish our proof by using (\ref{simplify}) and putting $M_3=\max\{M,M_2\}$, $C_3=Cm_0^{n-1}$.
\end{proof}

\begin{proof}[Proof of \ref{generalZ2}]
 The assertion follows by combining Proposition \ref{generalZ1} and  \cite[Proof of Proposition 5.4(2)]{NguyenVeys}.
\end{proof}
\begin{cor}\label{globalsums}There exist a constant $M$ depending only on $f$ and a constant  $C$ depending only on $n,d$ such that for all local fields $L\in\tilde{\cL}_{K,M}$  and all additive characters  $\psi$ of $L$ of conductor $m_\psi\geq 1$ we have
$$|E_{f}(\psi)|\leq Cq_L^{\frac{-m_\psi(n-s)}{2(d-1)}}.$$
\end{cor}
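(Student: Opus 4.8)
To prove Corollary \ref{globalsums} I would split according to the conductor of $\psi$ and reduce to two inputs already at our disposal.

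For the main range $m_\psi\geq 2$, the point is simply that $Z=\AA^n_{\cO_K}$, viewed as a subscheme of $\AA^n_{\cO_K}$, has complexity bounded by a fixed pair $(R_0,D_0)$ (it is cut out by no equations, or, if one insists on $R_0\geq 1$, by the zero polynomial, so one may take $(R_0,D_0)=(1,0)$). Since $E_f(\psi)=E_f(\AA^n_{\cO_K},\psi)$ by definition, applying Theorem \ref{generalZ2} to this particular $Z$ produces a constant $M_1$ depending only on $f$ and a constant $C_1$ depending only on $n$ and $d$ with
$$|E_f(\psi)|\leq C_1\,q_L^{\frac{-m_\psi(n-s)}{2(d-1)}}$$
for all $L\in\tilde{\cL}_{K,M_1}$ and all additive characters $\psi$ of $L$ with $m_\psi\geq 2$.

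For the remaining case $m_\psi=1$ I would argue directly. Here $\psi$ is trivial on $\varpi_L\cO_L$, so for $x\in\cO_L^n$ the value $\psi(f(x))$ depends only on $\overline{f(x)}=\overline{f}(\overline{x})\in k_L$; partitioning $\cO_L^n$ into its residue classes modulo $\cM_L$ gives
$$E_f(\psi)=q_L^{-n}\sum_{P\in k_L^n}\overline{\psi}\bigl(\overline{f}(P)\bigr),$$
where $\overline{\psi}\colon k_L\to\CC^{\times}$ is the non-trivial additive character induced by $\psi$. Writing $\overline{\psi}(y)=\exp_{p_L}(\Tr_{k_L/\FF_{p_L}}(ay))$ for an appropriate $a\in k_L^{\times}$, the right-hand side is, up to the normalizing factor $q_L^{-n}$, an exponential sum over $k_L$ of exactly the type treated in Proposition \ref{Deligne-Katz}, attached to the polynomial $a\overline{f}\in k_L[x_1,\dots,x_n]$. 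Since $a$ is a unit, the degree-$d$ part of $a\overline{f}$ is $a\,\overline{f_d}$, whose critical locus coincides with that of $\overline{f_d}$, and for $p_L$ large the latter has dimension $s$. Hence, after enlarging the characteristic bound to some $M_2$ depending only on $f$, Proposition \ref{Deligne-Katz} yields
$$|E_f(\psi)|\leq (d-1)^{n-s}\,q_L^{-(n-s)/2}\leq (d-1)^{n}\,q_L^{\frac{-m_\psi(n-s)}{2(d-1)}},$$
the last inequality because $d\geq 2$ forces $(n-s)/2\geq (n-s)/(2(d-1))$ and $q_L\geq 1$, while $m_\psi=1$.

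Putting the two cases together with $M=\max\{M_1,M_2\}$ (depending only on $f$) and $C=\max\{C_1,(d-1)^n\}$ (depending only on $n$ and $d$) gives the claim. I do not expect a genuine obstacle here: the only step worth a sentence of justification is the passage to Proposition \ref{Deligne-Katz} for the polynomial $a\overline{f}$ over the possibly non-prime residue field $k_L$ rather than for a polynomial over $\ZZ$, which is harmless because the bound there depends only on $n$, $d$ and on the dimension of the critical locus of the leading form, all of which are unchanged by multiplication by a unit and by reduction modulo $\cM_L$ once $p_L$ is large.
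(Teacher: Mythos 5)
Your decomposition is the same as the paper's: handle $m_\psi\geq 2$ with \ref{generalZ2} and $m_\psi=1$ with Proposition \ref{Deligne-Katz}, then combine. Your treatment of the $m_\psi=1$ case matches the paper and is fine.

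The gap is in the $m_\psi\geq 2$ case, where you take the stated dependence of the constant in \ref{generalZ2} at face value. The paper does not do this, and in fact the entire body of its proof of this corollary is devoted to establishing that the constant depends only on $n$ and $d$. Tracing back through the proof of Proposition \ref{generalZ1} (which underlies \ref{generalZ2}), the constant that actually comes out is $C_3=Cm_0^{n-1}$, where $m_0$ is defined via $\sigma_Z(\tilde L,(f)+J_f^2)=\min_{\overline Q\in Z(k_{\tilde L})}\lct_Q((f)+J_f^2)$, i.e., $m_0$ a priori depends on the log-canonical thresholds associated to $f$, not merely on $n,d,R,D$. To convert this into a constant depending only on $n$ and $d$, the paper invokes the finiteness of the set $\{\lct_P((g)+J_g^2)\mid P\in\AA_\CC^n,\ g\in\CC[x_1,\dots,x_n]\setminus\CC,\ \deg g\leq d\}$, obtained by the family/generic-flatness argument as in \cite[Property 1.23]{Mustata2}. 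Your proof omits this step entirely, so it does not actually show that $C$ is independent of $f$; it only shows it is independent of $f$ conditionally on reading the statement of \ref{generalZ2} literally, which the paper itself treats as needing justification at precisely this point.
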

\begin{proof}
Because of \ref{generalZ2} and Proposition \ref{Deligne-Katz}, it remains to show that $C$ depends only on $n,d$. In the proof of Proposition \ref{generalZ1} we see that $C$ depends only on $n, d, m_0$. Therefore $C$ depends only on $n,d$ and the set $\{\lct_P((f)+J_f^2)|P\in\AA_\CC^n\}$. We can use the argument in \cite[Property 1.23]{Mustata2} to show that the set $\{\lct_P((g)+J_g^2)|P\in\AA_\CC^n, g\in \CC[x_1,...,x_n]\setminus\CC,\deg(g)\leq d\}$ is finite. Therefore $C$ depends only on $n,d$.
\end{proof}
\begin{rem}
 In Corollary \ref{globalsums}, we can not take $M$ depending only on $n,d$. Indeed, let us look at the following example.
 
 Let $d>1$.  For each prime number  $p$ and each integer $m\geq 2$, we set $$f_{p,m}(x_1,...,x_n)=x_1^d+p^{m}(x_2^d+...+x_n^d).$$ 
 
Let $2\leq r\leq d$ and $\ell\in\NN$. Then $$E_{f_{p,m}}(\psi)=p^{-\frac{m}{d}}p^{\frac{r-d}{d}}$$ for all additive characters $\QQ_p$ of conductor  $m_\psi=m=d\ell+r$. We remark that $s(f_{p,m})=0$ for all $p,m$. Suppose that $n\geq 2$. It is impossible to have a constant $C$ such that 
$$p^{-\frac{m}{d}}p^{\frac{r-d}{d}}\leq Cp^{\frac{-mn}{2(d-1)}}$$
for all large enough primes $p$ and all positive integers $m$ of the form $d\ell+r$.
\end{rem}
\section{Exponential sums modulo $p^m$ for small $p$ and the strong monodromy conjecture}\label{small}
To begin this section, let us try to show a weaker version of \ref{smallprime2} for $\lct(J_f^2)$ instead of $\lct(f,J_f^2)$.
\begin{prop}\label{smallprime1}Let $L$ be a $p$-adic field. Suppose that $f$ is a non-constant polynomial in $L[x_1,...,x_n]$. For each Schwartz-Bruhat function $\Phi$ on $L^n$ and each $0<\sigma<\frac{\lct_{\Phi}(J_f)}{2}$ there exists a constant $c(\sigma,\Phi)>0$ such that for all additive characters $\psi$ of $L$ we have
$$|E_{f}(\Phi,\psi)|\leq c(\sigma,\Phi)q_L^{-m_\psi\sigma},$$
where $\lct_{\Phi}(J_f)=\min_{x\in\Supp(\Phi)}\lct_x(J_f)$.
\end{prop}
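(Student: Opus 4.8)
The plan is to reduce the exponential sum estimate to an estimate on Igusa local zeta functions via Proposition \ref{Fourier}, and then to bound the relevant coefficients of those zeta functions using a log-resolution of $J_f$. First I would fix the Schwartz-Bruhat function $\Phi$; since $\Phi$ is locally constant with compact support, after a finite partition we may assume $\Phi = \mathbf{1}_{Q_0 + \varpi_L^N \cO_L^n}$ for some ball, and by translating and rescaling it suffices to handle a neighborhood of a single point $x_0 \in \Supp(\Phi)$, at the cost of absorbing constants into $c(\sigma,\Phi)$. The only subtlety near a point where $f$ is smooth (i.e.\ $\lct_{x_0}(J_f) = +\infty$) is that there $E_f$ decays as fast as we like (stationary phase / no critical point), so the binding case is a neighborhood of a critical point of $f$, and near such a point we must show $|E_f(\Phi,\psi)| \leq c\, q_L^{-m_\psi \sigma}$ for every $\sigma < \lct_{x_0}(J_f)/2$.

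Next I would apply Proposition \ref{Fourier} to express $E_f(\Phi,\psi)$ as a sum of coefficients (in $t = q_L^{-s}$) of the twisted Igusa zeta functions $\cZ_\chi(f,L,\Phi,s)$, weighted by Gauss sums $g_{\chi^{-1},\psi}$ of modulus at most $q_L^{-(c(\chi)-1)/2}$. So the heart of the matter is to control, for each multiplicative character $\chi$, the size of $\Coeff_{t^{m-c(\chi)}}\cZ_\chi(f,L,\Phi,s)$ as $m = m_\psi \to \infty$. Here is where $J_f$ enters: I would take a log-resolution $h\colon X' \to \A^n_L$ of the Jacobian ideal $J_f$ (which exists in characteristic zero by Hironaka), with numerical data $(\nu_i,N_i)_{i \in \cT}$, so that $\lct_{x_0}(J_f) = \min_i \nu_i/N_i$ over the components $E_i$ meeting $h^{-1}(x_0)$. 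The point is that $J_f$ governs the higher jets: following the jet-space analysis underlying Lemma \ref{locussing} and Lemma \ref{boundsing}, the contribution of $f$ on a ball of radius $q_L^{-a}$ about a critical point is, up to a translation, controlled by the weighted-homogeneous truncations $f_{P,m}$, whose singular locus is cut out by the partials of $f$, i.e.\ by $J_f$. Concretely I expect: pulling back the integral defining $\cZ_\chi$ through $h$, on each chart the integrand becomes monomial, the integral becomes an explicit rational function in $q_L^{-s}$ whose poles lie among $-\nu_i/N_i$, and crucially the resolution controls $|f(x)|$ from below on the region where $|{\rm grad}\, f(x)|$ is small — this is the mechanism that turns a bound on $\lct(J_f)$ into decay of the exponential sum, with the factor $1/2$ appearing exactly because the relevant Gauss-sum / quadratic-phase estimate costs a square root (as in the passage from Proposition \ref{sumsfinite1} to the exponent $-m\sigma_P(L,J_f)/2$).

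I would organize the estimate so that the terms with $c(\chi) = 1$ (including the trivial character) are handled by the standard meromorphic-continuation argument: the coefficient $\Coeff_{t^{m-1}}$ of a rational function in $t$ with poles among $q_L^{\nu_i/N_i}$ grows like $q_L^{m \cdot \max(N_i/\nu_i)} = q_L^{m/\lct_{x_0}(J_f)}$ up to polynomial factors in $m$, which after combining with the trivial $q_L^{-mn}$-type normalization and the volume factors yields decay $q_L^{-m\sigma}$ for any $\sigma < \lct_{x_0}(J_f)$ — and in fact the factor $1/2$ gives us room to spare here, so these terms are not the obstruction. The genuinely delicate terms are those with large conductor $c(\chi)$, where one cannot use a single fixed resolution uniformly; for these I would combine the Gauss-sum gain $q_L^{-(c(\chi)-1)/2}$ with a direct estimate on $\Coeff_{t^{m-c(\chi)}}\cZ_\chi$ coming from stationary-phase / Igusa-type bounds on small balls, exactly the trade-off that produces the exponent $\lct(J_f)/2$ rather than $\lct(J_f)$.

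The step I expect to be the main obstacle is precisely this uniform control of the higher-ramified contributions: showing that the sum over all $\chi$ with $c(\chi)$ large of $g_{\chi^{-1},\psi}\chi(u)\Coeff_{t^{m-c(\chi)}}\cZ_\chi(f,L,\Phi,s)$ is bounded by $c(\sigma,\Phi) q_L^{-m\sigma}$, with the constant independent of $m$. The naive bound loses too much because there can be on the order of $q_L^{c(\chi)}$ characters of conductor $c(\chi)$; the resolution of this — balancing the number of characters, the Gauss-sum cancellation, and the size of the zeta-function coefficients — is where the real work lies, and it is essentially the technical core promised in the introduction ("we need to control well twisted Igusa local zeta functions associated with higher ramified characters"). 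I would carry it out by a careful stationary-phase analysis on the balls produced by the log-resolution of $J_f$, showing that on a ball where $f$ has a critical point the quadratic part of $f$ controls enough of the oscillation to beat the character count, and that the lower-order behavior is governed by $\lct(J_f)$ through the numerical data $(\nu_i,N_i)$.
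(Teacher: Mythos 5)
Your proposal routes through Proposition~\ref{Fourier} and then asks for uniform control of the twisted zeta functions $\cZ_\chi(f,L,\Phi,s)$ over all multiplicative characters $\chi$, including those of large conductor. You correctly flag that step as the obstacle, and you do not resolve it; in fact that is exactly the technical core the paper reserves for the stronger \ref{smallprime2} (Theorem~B), not for Proposition~\ref{smallprime1}. The paper's proof of \ref{smallprime1} never invokes Proposition~\ref{Fourier} at all and so has no sum over characters to control. The missing idea is the cancellation lemma from \cite[Proposition~5.2]{CKMu}: for $m_\psi\geq 1$ one has
$$\int_{\cO_L^n}\psi(f(x))\,|dx|=\int_{\{x\in \cO_L^n\,:\, \ord_L(J_f^2(x))\geq m_\psi-1\}}\psi(f(x))\,|dx|,$$
because around any $x$ with $\ord_L(J_f^2(x))<m_\psi-1$ one can choose a ball of radius $q_L^{-a}$ with $2a\geq m_\psi$ and $a+\ord_L(J_f(x))<m_\psi$, on which the second-order Taylor remainder of $f$ is killed by $\psi$ while the linear term gives a nontrivial additive character; such balls contribute zero. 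After this reduction one simply takes absolute values: $|E_f(\Phi,\psi)|$ is bounded by the \emph{volume} of $\{x:\ord_L(J_f^2(x))\geq m_\psi-1\}$, which equals $\sum_{j\geq m_\psi-1}\Coeff_{T^j}Z_1(J_f^2,L,\Phi,s)$ for the single \emph{untwisted} Igusa zeta function of the ideal $J_f^2$; since every pole of that rational function has real part $\leq -\lct_\Phi(J_f^2)$ and multiplicity $\leq n$, the tail is $O_\epsilon\bigl(q_L^{-(m_\psi-1)(\lct_\Phi(J_f^2)-\epsilon)}\bigr)$. No Gauss sums, no stationary phase, no high-conductor characters appear.

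Your explanation of the $1/2$ is also not the right mechanism. In the paper the $1/2$ is purely algebraic: $\lct_\Phi(J_f^2)=\tfrac12\lct_\Phi(J_f)$ (this is \cite[Property~1.13]{Mustata2}), and $J_f^2$ rather than $J_f$ is what appears because the quadratic Taylor error $O(\varpi_L^{2a})$ forces the comparison of $2a\geq m_\psi$ with $a+\ord_L(J_f(x))<m_\psi$, i.e.\ one needs $\ord_L(J_f(x))<m_\psi/2$, i.e.\ $\ord_L(J_f^2(x))<m_\psi$. It is not a ``square-root cost from a Gauss-sum/quadratic-phase estimate,'' since no Gauss sum enters this proof. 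As written, your proposal defers exactly the hard step that the actual argument is designed to sidestep, and so does not yet constitute a proof of the proposition.
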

\begin{proof}Let $\varpi_L$ be a uniformizing
parameter of $\cO_L$. It is clear that
$$|E_{f}(\Phi,\psi)|\leq \int_{L^n}|\Phi(x)||dx|\leq C$$
for a constant $C$ depending only on $\Phi$ and all additive characters $\psi$ of $L$. Therefore it is sufficient to deal with additive characters $\psi$ of large enough conductor. On the other hand, we can suppose that $\Phi=\textbf{1}_{a+\varpi_L^{e}\cO_L^n}$ for some $a\in L^n$ and $e\in \ZZ$. By changing of coordinates we can suppose that $a=0$ and $e=0$. Multiplying $f$ by $\varpi_L^r$ if needed, we can suppose that $f\in\cO_L[x_1,...,x_n]$.  

If $f$ has no critical point in $\cO_L^n$, then  $E_{f}(\Phi,\psi)=0$ if $m_\psi$ is large enough. In this case we also have $\lct_{\Phi}(J_f)=+\infty$, so the claim is trivial.

Suppose that $f$ has a critical point in $\cO_L^n$. For each $m\geq 0$, we set $$N_m=|\{x \mod \varpi_L^m\cO_L|x\in \cO_L^n, \ord_L(J_f^2(x))\geq m\}|,$$ where $J_f$ is the Jacobian ideal of $f$ over $\cO_L$ as mentioned in Section \ref{setup} and $\ord_L(J_f^2(x))=\min_{g\in J_f^2}\ord_L(g(x))$.  We also consider the Igusa local zeta function  associated with $J_f^2$ given by
$$Z_{1}(J_f^2,L,\Phi,s)=\int_{\cO_L^n}q_L^{-\ord_L(J_f^2(x))s}|dx|$$
By repeating the argument of Igusa in \cite{Igusa3} for a log-resolution of $J_f^2$, it follows that $Z_{1}(J_f^2,L,\Phi,s)$ is a rational function in $T=q_L^{-s}$ such that the real part of any pole is at most $-\lct_\Phi(J_f^2)>-\infty$ and the multiplicity of any pole is at most $n$. By the same argument as in \cite[Proposition 5.2]{CKMu} we have
\begin{equation}\label{Jacobian}
\int_{\cO_L^n}\psi(f(x))|dx|=\int_{\{x\in \cO_L^n| \ord_L(J_f^2(x))\geq m_\psi-1\}}\psi(f(x))|dx|
\end{equation}
whenever $m_\psi\geq 1$. Equality  (\ref{Jacobian}) implies that
$$|E_{f}(\Phi,\psi)|\leq N_{m_\psi-1}q_L^{-n(m_\psi-1)}$$
if $m_\psi\geq 1$. In addition, by a simple calculation we have
$$N_{m_\psi-1}q_L^{-n(m_\psi-1)}=\sum_{j\geq m_\psi-1}\Coeff_{T^j}Z_{1}(J_f^2,L,\Phi,s)$$
where $T=q_L^{-s}$ as above. Since the real part of any pole of $Z_{1}(J_f^2,L,\Phi,s)$ is at most $-\lct_\Phi(J_f^2)$, there exists a constant $C>0$ such that
$$|\Coeff_{T^j}Z_{1}(J_f^2,L,\Phi,s)|\leq Cj^{n-1}q_L^{-j\lct_\Phi(J_f^2)}$$
for all $j\geq 1$.
For each $0<\epsilon<\lct_\Phi(J_f^2)$, there exists $C_\epsilon>0$ such that $Cj^{n-1}<C_\epsilon q_L^{\epsilon j}$ for all $j\geq 1$. Thus we have
\begin{align*}
|E_{f}(\Phi,\psi)|&\leq \sum_{j\geq m_\psi-1}C_\epsilon q_L^{-j(\lct_\Phi(J_f^2)-\epsilon)}\\
&=C_\epsilon q_L^{-(m_\psi-1)(\lct_\Phi(J_f^2)-\epsilon)}(1-q_L^{-\lct_\Phi(J_f^2)+\epsilon})^{-1}
\end{align*}
whenever $m_\psi\geq 2$. Note that $\lct_\Phi(J_f^2)=\lct_\Phi(J_f)/2$ by \cite[Property 1.13]{Mustata2}. Therefore,  to prove the assertion, it suffices to set $$c(\lct_\Phi(J_f)/2-\epsilon,\Phi)= C_\epsilon q_L^{\lct_\Phi(J_f^2)-\epsilon}(1-q_L^{-\lct_\Phi(J_f^2)+\epsilon})^{-1}$$
for each $0<\epsilon<\lct_\Phi(J_f)/2$.
\end{proof}
\begin{proof}[Proof of \ref{smallprime2}]Let us fix a uniformizing parameter  $\varpi_L$ of $\cO_L$.  We can suppose that $C_f\cap \Supp(\Phi)\subset f^{-1}(0)$ by using the proof of \cite[Proposition 2.7]{DenefVeys}. As in the proof of Proposition \ref{smallprime1}, we can suppose that $\Phi=\textbf{1}_{\cO_L^n}$, $f\in\cO_L[x_1,...,x_n]$ and $f$ has a critical point in $\cO_L^n$. Moreover, we only need to prove the assertion for additive characters of large enough conductor. 

Note that we can adapt the argument in the proof of Proposition \ref{smallprime1} whenever we have a positive integer $r$ independent of additive characters such that if $m_\psi$ is large enough then 
\begin{equation}\label{Jacobian1}
\int_{\cO_L^n}\psi(f(x))|dx|=\int_{\{x\in \cO_L^n| \ord_L(f(x))\geq m_\psi-r\}}\psi(f(x))|dx|.
\end{equation}
In more detail, using (\ref{Jacobian1}), we can use the same discussion as in  \cite[Proposition 5.2]{CKMu} to show that 
\begin{equation}\label{Jacobian2}
\int_{\cO_L^n}\psi(f(x))|dx|=\int_{\{x\in \cO_L^n| \ord_L(J_f^2(x))\geq m_\psi-1, \ord_L(f(x))\geq m_\psi-r\}}\psi(f(x))|dx| 
\end{equation} 
for all additive characters $\psi$ of large enough conductor. Therefore we can bound $|E_{f}(\Phi,\psi)|$ by the volume of the set $$\{x\in\cO_L^n|\ord_L(f(x))\geq m_\psi-r, \ord_L(J_f^2)\geq m_\psi-r\}.$$
Thus we can proceed as in the proof of Proposition \ref{smallprime1} for ideal $(f)+J_f^2$ instead of $J_f^2$ to obtain our claim.

Let us prove (\ref{Jacobian1}). We fix a log-resolution $h:X'\to\AA_L^n$ of $(f)$ associated with divisors $(E_i)_{i\in \cT}$ and numerical data $(\nu_i,N_i)_{i\in\cT}$. We put $r_0=\max_{i\in\cT}\ord_L(N_i)+1$. Note that $h$ is projective thus $h^{-1}(\cO_L^n)$ is compact. For each point $P\in h^{-1}(\cO_L^n)$, we modify local coordinates at $P$ as in \cite[Section 3, Chapter 3]{Igusa3}. More precisely,  if $P\in h^{-1}(\cO_L^n)$, we have an open compact neighbourhood $U_P$ of $P$ and local coordinates $y_1,...,y_n$ such that the following conditions hold:
\begin{itemize}
\item[i,]$\ord_L(y_i(Q))\geq 0$ for all $Q\in U$ and $1\leq i\leq n$,
\item[ii,]$f\circ h|_U=\eta\prod_{i=1}^ny_i^{N_i}$,
\item[iii,]$h^*(dx_1\wedge...\wedge dx_n)|_U=\vartheta y_1^{\nu_1-1}... y_n^{\nu_n-1}dy_1\wedge...\wedge dy_n$,
\item[iv,]$|\eta|, |\vartheta|$ are non-zero constant functions on $U$,
\item[v,]$\eta(y_1,...,y_n)=\eta(P)(1+ \sum_{I\in \NN^n, |I|\geq 1} \alpha_{I}y^I)$
where $\alpha_{I}\in \cO_L$ for all $I$,
\item[vi,] $\vartheta(y_1,...,y_n)=\vartheta(P)(1+\sum_{I\in\NN^n, |I|\geq 1} \beta_{I}y^I)$
where $\beta_{I}\in \cO_L$ for all $I$.
\end{itemize}
Since $h^{-1}(\cO_L^n)$ is compact, we can select a finite set $\cI$ of points in $h^{-1}(\cO_L^n)$ such that $h^{-1}(\cO_L)\subset \cup_{P\in \cI}U_P$. For distinct points $P, P'$ in $\cI$ then   $U_P\setminus U_{P'}$ is still a compact open set. Therefore, replacing pair $(U_P, U_{P'})$ by pair $(U_P\setminus U_{P'}, U_{P'})$ if needed, we can suppose that $\cup_{P\in\cI}U_P$ is a disjoint union. Replacing $f$ by $\varpi_L^\ell f$ if necessary, we also can suppose that $\eta(P)\in \cO_L$ for all $P\in\cI$. We put $r_1=\max_{P\in\cI}\ord_L(\eta(P))$. 

For each $P\in \cI$, let the local coordinates $y_1,...,y_n$ on $U_P$ as above. We set $g=(y_1,...,y_n)$.  Since $U_P$ is a compact open set, $g(U_P)$ and $U_P\cap h^{-1}(\cO_L^n)$ are also compact open sets. Thus there exists a positive integer $r_P$ such that if $h(Q)\in \cO_L^n$ and $Q,Q'\in U_P$ then $g(Q)(1+\varpi_L^{r_P}\cO_L)^n\subset g(U_P)$ and $h(Q')\in \cO_L^n$ whenever $ g(Q')\in g(Q)(1+\varpi_L^{r_P}\cO_L)^n$. We put $r_2=\max_{P\in\cI} r_P$ and $r=r_0+r_2$. 

Let $\chi$ be a multiplicative character of $\cO_L^{\times}$ of conductor $c(\chi)>r$. Suppose that $Q\in h^{-1}(\cO_L^n)$ and $f(h(Q))\neq 0$. Let $P\in\cI$ such that $Q\in U_P$. Let the local coordinates $y_1,...,y_n$ and functions $\eta, \vartheta$ on $U_P$ as above. We still set $g=(y_1,...,y_n)$. Because of the definition of $r_2$, we have
\begin{align*}
&\int_{h(g^{-1}(g(Q). (1+\varpi_L^{r_2}\cO_L)^n))}\chi(ac(f(x)))|f(x)|^s|dx_1\wedge...\wedge dx_n|\\
&=\int_{(1+\varpi_L^{r_2}\cO_L)^n}\chi(ac(\eta(y_1(Q)z_1,...,y_n(Q)z_n)\prod_{1\leq i\leq n}(y_i(Q)z_i)^{N_i}))|f(h(Q))|^s\times\\
&|\vartheta(P)|\prod_{1\leq i\leq n}|y_i(Q)|^{\nu_i}|dz_1\wedge...\wedge dz_n|\\
&=\chi(ac(\prod_{1\leq i\leq n}y_i(Q)^{N_i}))|f(h(Q))|^s|\vartheta(P)|\prod_{1\leq i\leq n}|y_i(Q)|^{\nu_i}\times q_L^{-r_2n}\times\\
&\int_{\cO_L^n}\chi(ac(\eta(y_1(Q)(1+\varpi_L^{r_2}t_1),...,y_n(Q)(1+\varpi_L^{r_2}t_n))\prod_{1\leq i\leq n}(1+\varpi_L^{r_2}t_i)^{N_i}))|dt_1\wedge...\wedge dt_n|
\end{align*}
We recall that  $$\eta(y_1,...,y_n)=\eta(P)(1+ \sum_{|I|\geq 1}a_Iy^I)$$ and $a_I\in\cO_L$ for all $I$. 
Suppose that $\ord_L(y_i(Q))\geq r_0$ and $N_i>0$ for some $1\leq i\leq n$. Without loss of generality, we can suppose that $i=1$. Fix  a tuple $(t_1,t_2,...,t_n)\in\cO_L^n$. We have 
\begin{align*}
&\ord_L(\eta(y_1(Q)(1+\varpi_L^{r_2}t_1),...,y_n(Q)(1+\varpi_L^{r_2}t_n))\\&-\eta(y_1(Q)(1+\varpi_L^{r_2}(t_1+\varpi_L^{c(\chi)-r_2-r_0}u)),...,y_n(Q)(1+\varpi_L^{r_2}t_n)))\\&\geq \ord_L(y_1(Q))+c(\chi)-r_0+\ord_L(\eta(P))\\
&\geq c(\chi)+\ord_L(\eta(P))
\end{align*}
for all $u\in\cO_L$. Since $|\eta|$ is constant in $U_P$ we have
\begin{align*}
&\chi(ac(\eta(y_1(P)(1+\varpi_L^{r_2}t_1),...,y_n(P)(1+\varpi_L^{r_2}t_n))))\\&=\chi(ac(\eta(y_1(P)(1+\varpi_L^{r_2}(t_1+\varpi_L^{c(\chi)-r_2-r_0}u)),...,y_n(P)(1+\varpi_L^{r_2}t_n))))
\end{align*}
for all $u\in\cO_L$. It is easy to verify that 
$$\int_{\cO_L}\chi(ac(1+\varpi_L^{r_2}t_1+\varpi_L^{c(\chi)-r_0}u)^{N_1})|du|=0$$
since $\ord_L(N_1)<r_0$. Therefore, the Fubini theorem yields
$$\int_{h(g^{-1}(g(Q). (1+\varpi_L^{r_2}\cO_L)^n))}\chi(ac(f(x)))|f(x)|^s|dx_1\wedge...\wedge dx_n|=0$$
if $c(\chi)>r$, $\ord_L(y_i(P))\geq r_0$ and $N_i>0$ for some $1\leq i\leq n$.

Let $$W=\{b\in \cO_L^n|\ord(f(b))\leq r_1+r_0\sum_{i\in \cT} N_i\}$$
and $V=\cO_L^n\setminus W$. Since $C_f\cap \cO_L^n\subset f^{-1}(0)$, we see that  $W\cap C_f=\emptyset$. Moreover, $W$ is open and compact thus $E_{f}(\textbf{1}_{W},\psi)=0$ if $m_\psi$ is large enough. 

Let $m$ be a large enough integer. We put $V_{m}=\{b\in V| \ord(f(b))<m-r\}$. We will use Proposition \ref{Fourier} to compute $E_{f}(\textbf{1}_{V_{m}},\psi)$ if $m_\psi=m$. Let $b\in V$, $P\in\cI$, $Q\in h^{-1}(b)\cap U_P$ and the local coordinates $y_1,...,y_n$ on $U_P$ as above.  The definitions of $V$ and $r_1$ imply that there exists $1\leq i\leq n$ such that $N_i>0$ and $\ord_L(y_i(Q))>r_0$. The above discussion together with the compactness of $h^{-1}(V_m)$  and the definition of $r_2$ imply that $\cZ_\chi(f,L,\textbf{1}_{V_m},s)=0$ if $c(\chi)>r$. Therefore, in Proposition \ref{Fourier}, it remains to deal with $\cZ_\chi(f,L,\textbf{1}_{V_{m}},s)$ if $c(\chi)\leq r$.  Because of the definition of Igusa local zeta function, $\cZ_\chi(f,L,\textbf{1}_{V_{m}},s)$ is a polynomial in $q_L^{-s}$ of degree at most $m-r-1<m-c(\chi)$ if $c(\chi)\leq r$. Therefore Proposition \ref{Fourier} yields $E_{f}(\textbf{1}_{V_{m}},\psi)=0$ if $m_\psi=m$. Thus (\ref{Jacobian1}) is proved.

\end{proof}
\begin{proof}[Proof of \ref{globalfield}]\label{2(d-1)}
When $K'=\QQ$, our theorem follows by combining \ref{smallprime2},  Formula (\ref{eq:p-i}), Corollary \ref{globalsums} and the fact that $$\lct_a((f-f(a))+J_f^2)> (n-s)/(2(d-1))$$ for all $a$ as in the proof of Proposition \ref{generalZ1}.

When $K'$ is an arbitrary number field, we still have an analogous formula of (\ref{eq:p-i}) as in \cite[Section 1.8]{CluckerNguyen}, thus our claim follows by the same discussion as in the case $K'=\QQ$. 
\end{proof}
\begin{cor}\label{monodromy1}Let $L$ be a $p$-adic field and $f$ be a non-constant polynomial in $L[x_1,...,x_n]$. The strong monodromy conjecture holds in the range $(-\lct((f)+J_f^2),0]$. More precisely, if $\Phi$ is a Schwartz-Bruhat function  on $L^n$, $\chi$ is a multiplicative character of $\cO_L^{\times}$ and $s$ is a pole of $\cZ_{\chi}(f,L,\Phi,s)$  such that $\Re(s)\in(-\lct_\Phi((f)+J_f^2),0]$ then $\Re(s)$ is a root of the Bernstein-Sato polynomial of $f$.
\end{cor}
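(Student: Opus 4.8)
The plan is to follow the argument of \cite{CKMu}, which proves the statement under good reduction, and make it work over an arbitrary $p$-adic field $L$ and for an arbitrary Schwartz-Bruhat function $\Phi$ by feeding in \ref{smallprime2} in place of the exponential-sum input used there. First I would localise. A pole of $\cZ_{\chi}(f,L,\Phi,s)$ is already present after replacing $\Phi$ by its restriction to a sufficiently small ball $a+\varpi_L^{N}\cO_L^{n}$ around a suitable point $a\in\Supp(\Phi)$; and if $f(a)\neq 0$ then $f$ is a unit on such a ball, so $\cZ_{\chi}$ is a Laurent monomial in $q_L^{-s}$ and has no pole. Hence I may assume $a\in Z(f)\cap\Supp(\Phi)$ and, after a translation, $a=0$ with $f(0)=0$. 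Since the critical values of $f$ are locally constant on $C_f$, which has finitely many connected components, a small enough such ball meets $C_f$ only inside $f^{-1}(0)$; for the localised $\Phi$ this makes the exponent $\sigma_0$ of \ref{smallprime2} coincide with $\lct_{\Phi}((f)+J_f^{2})$, so that \ref{smallprime2} yields $|E_f(\Phi,\psi)|\le c(\sigma,\Phi)\,q_L^{-m_{\psi}\sigma}$ for all additive characters $\psi$ and every $\sigma<\lct_{\Phi}((f)+J_f^{2})$.

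Next I would feed this into Proposition \ref{Fourier}, which for $\psi$ of conductor $m$ expresses $E_f(\Phi,\psi)$ in terms of the coefficients $\Coeff_{t^{m-c(\chi)}}\cZ_{\chi}(f,L,\Phi,s)$ ($t=q_L^{-s}$), weighted by Gauss sums $g_{\chi^{-1},\psi}$ and roots of unity $\chi(u)$ — the factor $t-q_L$ appearing there annihilating the effect of a pole of $\cZ_{1}$ at $s=-1$. Letting $\psi$ run over the characters $\psi_z$ with $\ord_L(z)\to-\infty$ and using orthogonality of the Gauss sums in $z$, the decay bound above forces the real part of any pole of the family $\{\cZ_{\chi}\}$ lying in $(-\lct_{\Phi}((f)+J_f^{2}),0]$ to be either $-1$ or a jumping number of $(f)$ strictly below the minimal exponent $\widetilde\alpha_f$ (here one uses $\lct((f)+J_f^{2})\le\widetilde\alpha_f$). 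In every case this real part is a root of $b_f$: for $-1$ this is immediate; for $-\lct_{\Phi}(f)$ it is Koll\'ar's theorem; and for the remaining jumping numbers below $\widetilde\alpha_f$ it is the $D$-module input used in \cite{CKMu} (Saito, respectively Musta\c{t}\u{a}--Popa). This is the logical skeleton of \cite{CKMu}.

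The hard part will be the identification of the possible poles in the range when $L$ has bad reduction and $\Phi$ is arbitrary: there is then no log-resolution of $(f)$ with good reduction, hence no numerical data $(\nu_i,N_i)$ available, so the bookkeeping of which twisted Igusa zeta functions can carry a pole with real part in $(-\lct_{\Phi}((f)+J_f^{2}),0]$ has to be done directly with the zeta functions of the higher ramified characters, in the spirit of the proof of \ref{smallprime2} (the vanishing $\cZ_{\chi}(f,L,\mathbf{1}_{V_m},s)=0$ once $c(\chi)>r$, with $r$ independent of $\psi$), and the cancellation analysis must be carried out uniformly in those finitely many terms. Once this technical point is in place, the rest of the proof is that of \cite{CKMu}, now fed by \ref{smallprime2}.
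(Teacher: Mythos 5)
Your opening paragraph (localise $\Phi$, reduce to a ball around a point of $Z(f)\cap\Supp(\Phi)$, observe that critical values are then locally zero so that $\sigma_0$ of \ref{smallprime2} coincides with $\lct_\Phi((f)+J_f^2)$) is correct and is the reduction one needs. Your idea of feeding the decay bound from \ref{smallprime2} into Proposition \ref{Fourier} and using orthogonality in $\psi$ to isolate the contribution of each $\chi$ is also the right mechanism; it is precisely the content of the references the paper invokes in one line (Igusa's book, \cite[Corollary 1.4.5]{DenefBour}, \cite[Proposition 2.7]{DenefVeys}), and the paper is content to cite them rather than redo the cancellation bookkeeping.

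Where your proposal goes astray is in the second half: you claim that the decay bound only forces the real parts of poles in $(-\lct_\Phi((f)+J_f^2),0]$ to be ``$-1$ or a jumping number of $(f)$ strictly below the minimal exponent,'' and you then invoke Koll\'ar, Saito and Musta\c{t}\u{a}--Popa to show those values are roots of $b_f$. That is the skeleton of the argument in \cite{CKMu} for a different and weaker estimate, and it is not what happens here. Since \ref{smallprime2} gives decay $q_L^{-m_\psi\sigma}$ for \emph{every} $\sigma<\lct_\Phi((f)+J_f^2)$, the Igusa/Denef relation between decay rate and position of poles rules out \emph{all} poles with real part in $(-\lct_\Phi((f)+J_f^2),0]$, with the single exception of $\Re(s)=-1$ for $\chi=1$ (a simple pole there is killed by the $(t-q_L)$ factor in Proposition \ref{Fourier} and so is invisible to the exponential sums), and $-1$ is always a root of $b_f$. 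No $D$-module input about jumping numbers or the minimal exponent is needed. Finally, your last paragraph misplaces the ``hard part'': the finiteness of the set of twisted zeta functions that can contribute ($\cZ_\chi=0$ once $c(\chi)>r$, with $r$ coming from a log-resolution taken over $L$ itself, so bad reduction never enters) is established inside the proof of \ref{smallprime2}, not left to be redone here. Once \ref{smallprime2} is in hand, the corollary really is the one-line consequence the paper states.
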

\begin{proof}The assertion follows by combining \ref{smallprime2} and  the work of Igusa in \cite{Igusa3} (see  also in \cite[Corollary 1.4.5]{DenefBour} or \cite[Proposition 2.7]{DenefVeys}).

\end{proof}
\begin{cor}\label{monodromy2}The assertion of Corollary \ref{monodromy1} holds for the range $[-(n-s)/(2(d-1)),0]$ instead of the range $(-\lct_\Phi((f)+J_f^2),0]$.
\end{cor}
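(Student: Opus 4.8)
The plan is to reduce Corollary \ref{monodromy2} to Corollary \ref{monodromy1} by showing that the range $[-(n-s)/(2(d-1)),0]$ sits inside the range $(-\lct_\Phi((f)+J_f^2),0]$ governed by Corollary \ref{monodromy1}, except possibly for its left endpoint when $d=2$. The core step is the inequality
\[
\lct_\Phi((f)+J_f^2)\;>\;\frac{n-s}{2(d-1)}\qquad\text{for }d>2,
\]
after which any pole $s_0$ of $\cZ_{\chi}(f,L,\Phi,s)$ with $\Re(s_0)\in[-(n-s)/(2(d-1)),0]$ automatically lies in $(-\lct_\Phi((f)+J_f^2),0]$, so Corollary \ref{monodromy1} gives that $\Re(s_0)$ is a root of $b_f(s)$.

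To get the displayed inequality I would argue exactly as in the proof of Proposition \ref{generalZ1} (the part around (\ref{strictbound0})--(\ref{strictbound})). Since $\lct_\Phi((f)+J_f^2)=\min_{x\in\Supp(\Phi)}\lct_x((f)+J_f^2)$, I may assume $\Supp(\Phi)$ meets the zero locus of $(f)+J_f^2$, as otherwise the minimum is $+\infty$ and there is nothing to prove. For $x$ in that intersection one has $f(x)=0$ and $x\in C_f$, so translating $x$ to the origin --- which changes neither $d$, nor $s=s(f)$, nor $b_f$ --- puts us in the case $f(0)=0$, $0\in C_f$ treated in Proposition \ref{generalZ1}: slicing by a general linear subspace and using the restriction and monotonicity properties of the log-canonical threshold (\cite[Properties 1.12, 1.17]{Mustata2}) reduces to $s=0$, where $\lct_0((f)+J_f^2)\geq\lct_0((f_d)+J_{f_d}^2)$ by the semicontinuity argument of Corollary \ref{uniformlocal2}, and the strict bound $\lct_0((f_d)+J_{f_d}^2)>n/(2(d-1))$ for $d>2$ and $\dim\Sing(f_d)=0$ is precisely the computation carried out there via \cite[Corollary 3.4]{MustJAMS}. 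Because $x\mapsto\lct_x((f)+J_f^2)$ takes only finitely many values as $x$ varies, the minimum over $\Supp(\Phi)$ is attained, so the strict inequality passes to $\lct_\Phi((f)+J_f^2)$.

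The remaining case $d=2$ concerns only the closed left endpoint: here $(n-s)/(2(d-1))=(n-s)/2$, one checks $\lct_\Phi((f)+J_f^2)=(n-s)/2$, so Corollary \ref{monodromy1} already covers the open range $(-(n-s)/2,0]$, and it suffices to see that $-(n-s)/2$ is a root of $b_f(s)$. If $\Supp(\Phi)$ avoids the singular locus of $f^{-1}(0)$ this is vacuous; otherwise there is a critical point $a$ with $f(a)=0$, and since $d=2$, translating $a$ to the origin and diagonalizing the (constant) Hessian shows $f$ agrees, up to a linear change of coordinates, with the nondegenerate quadratic form $x_1^2+\dots+x_{n-s}^2$, whose Bernstein-Sato polynomial is $(s+1)(s+(n-s)/2)$. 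Hence $-(n-s)/2$ is a root of $b_f(s)$, completing this case.

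The main obstacle is essentially nonexistent: the genuine content --- the strict lower bound $\lct_0((f_d)+J_{f_d}^2)>n/(2(d-1))$ --- has already been extracted inside the proof of Proposition \ref{generalZ1}. The only points that need care are (i) upgrading the pointwise bound to a bound on $\lct_\Phi$, which one gets either from the constructibility of the log-canonical threshold or simply by quoting the $\sigma_Z$-version already established in Proposition \ref{generalZ1}, and (ii) not overlooking that for $d=2$ the interval $[-(n-s)/2,0]$ is closed on the left, so one must separately record that $-(n-s)/2$ is a root of the Bernstein-Sato polynomial of a nondegenerate quadratic form.
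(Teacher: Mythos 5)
Your proof is correct and takes essentially the same route as the paper's one-line proof (``follows by Corollary \ref{monodromy1} and the proof of Proposition \ref{generalZ1}''): both reduce Corollary \ref{monodromy2} to Corollary \ref{monodromy1} through the strict lower bound on $\lct_\Phi((f)+J_f^2)$ extracted in the proof of Proposition \ref{generalZ1}. The difference is that you explicitly separate the case $d=2$, and this is in fact a genuine improvement over the paper's terse wording. In the proof of Proposition \ref{generalZ1} the strict inequality $\lct_0((f)+J_f^2)>\tfrac{n-s}{2(d-1)}$ is established only for $d>2$; for $d=2$ (a rank-$(n-s)$ quadratic form at a critical zero) one has $(f)+J_f^2=J_f^2$ and $\lct_0(J_f^2)=(n-s)/2$, so the inequality degenerates to an equality and the closed interval $[-(n-s)/2,0]$ is \emph{not} strictly contained in $(-\lct_\Phi((f)+J_f^2),0]$. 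You correctly spot this and close the gap at the endpoint by observing that $-(n-s)/2$ is a root of $b_f$: after translating a critical zero to the origin, $f$ is a nondegenerate quadratic form in $n-s$ variables, whose Bernstein--Sato polynomial is $(s+1)(s+(n-s)/2)$. One minor inaccuracy in your phrasing: over a $p$-adic field you cannot in general diagonalize the Hessian to the form $x_1^2+\dots+x_{n-s}^2$, only to $\sum a_i x_i^2$ with $a_i\in L^\times$; but since the Bernstein--Sato polynomial is insensitive to scaling variables and to extending the base field of characteristic zero, the conclusion $b_f(s)=(s+1)(s+(n-s)/2)$ is unaffected. With that caveat, your argument is a complete and slightly more careful version of the paper's.
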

\begin{proof}The claim follows by Corollary \ref{monodromy1} and the proof of Proposition \ref{generalZ1}.
\end{proof}
\section{Estimation for The Major arcs}\label{major}
We recall the formulation of the major arcs in \cite{Birch} and \cite{BrowPren}. Let $f$ be a homogeneous polynomial of degree $d>1$ in $\ZZ[x_1,...,x_n]$ and $s=s(f)$ as in Section \ref{sec:intro}. We suppose that the system $f(x)=0, \grad f(x)=(\frac{\partial f}{\partial x_1}(x),...,\frac{\partial f}{\partial x_n}(x))\neq 0$ has a solution over $\RR$ and $\QQ_p$ for every prime number $p$. In other words, this means that $f$ has a smooth solution over each completion of $\QQ$. We say that the smooth Hasse principle holds for $f$ if our assumption guarantees the existence of a non-trivial solution of $f$ over $\QQ$. We also can ask the smooth Hasse principle for non-homogeneous polynomials without requiring the non-triviality of solution.

To study the smooth Hasse principle of $f$, we follow the Hardy-Littlewood  circle method. Let $\omega:\RR^n\to [0,+\infty)$ be a suitable weight function then we try to give an asymptotic formula for the function
$$N_{\omega}(f,B)=\sum_{x\in\ZZ^n, f(x)=0}\omega(x/B)$$
when $B\rightarrow+\infty$. Let us use the identity
\begin{equation}\label{identity}
N_{\omega}(f,B)=\int_{\TT}S(\alpha,B)d\alpha
\end{equation}
where $\TT=\RR/\ZZ$ and $$S(\alpha,B)=\sum_{x\in\ZZ^n}\omega(x/B)e^{2\pi i\alpha f(x)}$$
if $\omega$ has good enough analytic properties. The Hardy-Littlewood circle method suggests us to divide the torus $\TT$ into a set of major arcs $\mathfrak{M}$ and minor arcs $\mathfrak{m}$. More precisely, for each $\delta>0$ we define the major arcs $\mathfrak{M}=\mathfrak{M}(\delta)$ to be the set
$$\mathfrak{M}(\delta)=\cup_{q\leq B^{\delta}}\cup_{0\leq a\leq q, (a,q)=1}\{\alpha\in \TT| |\alpha-\frac{a}{q}|\leq B^{\delta-d}\}$$
Note that the union is disjoint if  $3\delta<d$ and $B$ is  large enough. We define the minor arcs to be the complement $\mathfrak{m}(\delta)=\TT\setminus\mathfrak{M}(\delta)$.  To deal with the smooth Hasse principle, we want to obtain the following asymptotic formulas
\begin{equation}\label{majorarc}
\int_{\mathfrak{M}}S(\alpha,B)d\alpha\sim c_fB^{n-d}
\end{equation}
and
\begin{equation}\label{minorarc}
\int_{\mathfrak{m}}S(\alpha,B)d\alpha=o(B^{n-d}).
\end{equation}
Here the constant $c_f$ is a product of local densities which will be positive if $f$ has a smooth solution over each completion of $\QQ$.   Therefore, to prove (\ref{majorarc}), we need to find conditions on $f$ such that the product of local densities is convergent.  Birch's condition for the existence of (\ref{majorarc}) and (\ref{minorarc}) is $n-s>2^d(d-1)$. This condition is improved by Browning and Prendiville in \cite{BrowPren} to $n-s>\frac{3}{4}2^d(d-1)$ as mentioned in Section \ref{sec:intro}. It seems to be very difficult to improve much on Browning-Prendiville's condition for the validity of (\ref{minorarc}) by the current method related to Weyl bound and Diophantine approximation. However, we can reduce much on the condition for the validity of (\ref{majorarc}) as follows.

\begin{named}{Theorem D}\label{majorarc1}Let $f$ be a non-constant polynomial of degree $d>1$ in $\ZZ[x_1,...,x_n]$. We recall $f_d, s$ as in Section \ref{sec:intro}. Suppose that $n-s>4(d-1)$. Let  a weight function $\omega:\RR^n\to[0,+\infty)$ given by $\omega(x)=w(\rho^{-1}||x-x_0||)$ where $\rho\in (0,1)$ is very closed to $1$, $||.||$  is the Euclidean norm in $\RR^n$
$$||y||^2=\sum_{i=1}^n y_i^2$$
and $w:\RR\to\RR_{\geq 0}$ is given by $x\mapsto 0$ if $|x|\geq 1$ and $x\mapsto e^{\frac{-1}{1-x^2}}$ if $|x|<1$. Let $\delta$ be small enough. Then we have
\begin{equation}\label{majorarc2}
\int_{\mathfrak{M}(\delta)}S(\alpha,B)d\alpha\sim c_{f}B^{n-d}
\end{equation}
for a positive constant $c_{f}$ when $B\to +\infty$  provided that $f$ has a smooth solution over each completion of $\QQ$.
\end{named}
\begin{proof}
Let $\alpha\in\mathfrak{M}(\delta)$ then we have $\alpha=\frac{a}{q}+\theta$ with integers $a,q$ such that $1\leq q\leq B^{\delta}$ and $(a,q)=1$. We use again the argument of \cite[Lemma 5.1]{BrowPren} to obtain:
\begin{equation}\label{eq:major}
S(\alpha,B)=q^{-n}B^nS_q(a)I(\theta B^d)+O(B^{n-1+2\delta})
\end{equation}
where
$$S_q(a)=\sum_{y\in (\ZZ/q\ZZ)^n}e^{\frac{2\pi iaf(y)}{q}}$$
and
$$I(\gamma)=\int_{\RR^n}\omega(x)e^{2\pi i\gamma f(x)}dx$$
if $\gamma\in\RR$. Since the measure of $\mathfrak{M}(\delta)$ is $O(B^{-d+3\delta})$ we have
$$\int_{\mathfrak{M}(\delta)}S(\alpha,B)d\alpha=B^{n-d}\mathfrak{S}(B^{\delta})\mathfrak{I}(B^{\delta})+O(B^{n-d-1+5\delta})$$
where
$$\mathfrak{S}(R)=\sum_{1\leq q\leq R}q^{-n}\sum_{a\in (\ZZ/q\ZZ)^{\times}}S_q(a)$$
and
$$\mathfrak{I}(R)=\int_{-R}^{R}I(\gamma)d\gamma.$$
Let $c_1,...,c_r$ be critical values of $f$ over $\RR$. Let $\varepsilon$ be a positive constant such that $|c_i-c_j|>2\varepsilon$ for all $i\neq j$. For each $1\leq i\leq r$, there exists  a bounded open subset $V_i$ of $\RR^n$ such that $V_i\cap V_j=\emptyset$ for all $i\neq j$ and $U_i=f^{-1}[c_i-\epsilon,c_i+\epsilon]\cap \Supp(\omega)\subset V_i$. For each $1\leq i<r$ there exists a smooth function $u_i$ such that $u_i|_{U_i}=1$, $0\leq u_i \leq 1$ and $u_i|_{\RR^n\setminus V_i}=0$.  We also put $u_{r}=1$. Let $\omega_1:=\omega$ and $\omega_{i+1}=\omega_i-\omega_i u_{i}$ for $1\leq i\leq r-1$. Then $(u_i\omega_i)_{1\leq i\leq r}$ are positive smooth functions with compact support, moreover we have $\omega=\sum_{1\leq i\leq r}\omega_i u_{i}$ and $\Supp(u_i\omega_i)\cap C_f\subset f^{-1}(c_i)$ for all $1\leq i\leq r$. For each $i$, we use \cite[Theorem 1.6]{Igusa3} to have the asymptotic expansion of oscillatory integral associated with the Schwartz function $\omega_iu_i$. Then there exists a constant $\alpha>0$ such that for each small enough $\epsilon>0$, there is a constant $C_\epsilon>0$ such that
$$|I(\gamma)|\leq C_\epsilon |\gamma|^{-\alpha+\epsilon}$$
for large enough $|\gamma|$. So the limit
\begin{equation}\label{Archimedean}
\mathfrak{J}=\lim_{R\rightarrow +\infty}\mathfrak{I}(R)
\end{equation}
exists if one can show that $\alpha>1$. Because of the work of Igusa in \cite{Igusa3}, $-\alpha$ will be the real part of a certain pole of the function
$$(s+1)\cZ_{1}(\omega_iu_i,f-c_i,s)=\int_{\RR^n}\omega_iu_i|f(x)-c_i|^s|dx|$$
or the function 
$$\cZ_{-1}(\omega_iu_i,f-c_i,s)=\int_{\RR^n}\frac{f(x)-c_i}{|f(x)-c_i|}\omega_iu_i|f(x)-c_i|^s|dx|$$
for some $1\leq i\leq r$. By our assumption, we have $n-s>2(d-1)$.  Thus \ref{generalZ2} and \cite[Proposition 3.10]{CMN} imply that $f-a$ has at worst rational singularities for all critical values $a$ of $f$. Again by the proof of \cite[Proposition 3.10]{CMN}, we can obtain a log-resolution of $(f-a)$ satisfying the condition $(P)$ in \cite[Lemma 6.3, Chapter 4]{Igusa3} for each critical value $a$ of $f$. Therefore $\alpha>1$ by looking at \cite[Theorem 3.1, Chapter 2]{Igusa3}. With this constant $\alpha>1$ and $0<\epsilon<\alpha-1$ we have 
\begin{equation}\label{real}
|\mathfrak{J}-\mathfrak{J}(R)|\leq 2\int_R^{+\infty}C_\epsilon \gamma^{-\alpha+\epsilon}d\gamma= \frac{2C_\epsilon R^{1+\epsilon-\alpha}}{\alpha-1-\epsilon}=O_\epsilon(R^{1+\epsilon-\alpha})
\end{equation}
when $R$ is large enough.

On the other hand, because of Corollary \ref{2(d-1)}, if $n-s>4(d-1)$ then for each $0<\epsilon<\frac{n-s}{2(d-1)}-2$ there exists a constant $C'_\epsilon>0$ such that
$$|S_q(a)|\leq C'_\epsilon q^{n-\frac{n-s}{2(d-1)}+\epsilon}$$
for all $q\geq 1$. Therefore
\begin{align*}
|\sum_{q>R}q^{-n}\sum_{a\in(\ZZ/q\ZZ)^{\times}}S_{q}(a)|&\leq C'_\epsilon\sum_{q>R}q^{1-\frac{n-s}{2(d-1)}+\epsilon}\\
&=O_\epsilon(R^{2-\frac{n-s}{2(d-1)}+\epsilon})
\end{align*}
Thus $\mathfrak{S}(R)$ converges to the singular series
$$\mathfrak{S}=\sum_{1\leq q}q^{-n}\sum_{a\in (\ZZ/q\ZZ)^{\times}}S_q(a)$$
when $R\rightarrow +\infty$. Moreover we have
\begin{equation}\label{padicf}
|\mathfrak{S}-\mathfrak{S}(B^{\delta})|=O_\epsilon(B^{\delta(2-\frac{n-s}{2(d-1)}+\epsilon)})
\end{equation}
Let $\epsilon$ and $\delta$ be small enough. Because of (\ref{real}) and (\ref{padicf}), we obtain
$$\int_{\mathfrak{M}(\delta)}S(\alpha,B)d\alpha=\mathfrak{S}\mathfrak{I}
B^{n-d}+o(B^{n-d-\vartheta(\epsilon,\delta)})$$
for large enough $B$, where $\vartheta(\epsilon,\delta)$ is a positive constant depending only on $\epsilon,\delta$. Now we can look at the argument in \cite[Section 6, Section 7]{Birch} to see that $\mathfrak{I}>0$ whenever $f$ has a smooth solution over $\RR$ and
$$\mathfrak{S}=\prod_{p \textnormal{ primes}}\mathfrak{S}_p$$
is the convergent product of the local densities $\mathfrak{S}_p$, where
$$\mathfrak{S}_p=\sum_{r\geq 0}p^{-rn}\sum_{a\in(\ZZ/p^r\ZZ)^{\times}}S_{p^r}(a)$$
is positive whenever $f$ has a smooth solution over $\QQ_p$.
\end{proof}
\section*{Acknowledgement} 
I would like to thank Jan Denef, François Loeser, Mircea Mustaţă,  Mathias Stout, Floris Vermeulen and Willem Veys for inspiring discussions on the topics of this paper. I am grateful to Raf Cluckers for his guidance together with many useful comments and suggestions during the preparation of this paper.
 \bibliographystyle{amsplain}
\bibliography{anbib}

\def\cprime{$'$}
\providecommand{\bysame}{\leavevmode\hbox to3em{\hrulefill}\thinspace}
\providecommand{\MR}{\relax\ifhmode\unskip\space\fi MR }
\providecommand{\MRhref}[2]{%
  \href{http://www.ams.org/mathscinet-getitem?mr=#1}{#2}
}
\providecommand{\href}[2]{#2}
\begin{thebibliography}{10}

\bibitem{Birch}
B.~J. Birch, \emph{Forms in many variables}, Proc. Roy. Soc. Ser. A
  \textbf{265} (1961/1962), 245--263. \MR{0150129 (27 \#132)}

\bibitem{BrowPren}
T.~D. Browning and S.~M. Prendiville, \emph{Improvements in {B}irch's theorem
  on forms in many variables}, J. Reine Angew. Math. \textbf{731} (2017),
  203--234. \MR{3709065}

\bibitem{Saskia-Kien}
S.~Chambille and K.~H. Nguyen, \emph{Proof of the {C}luckers-{V}eys conjecture
  on exponential sums for polynomials with log-canonical threshold at most a
  half}, Int. Math. Res. Not. IMRN (2021), no.~9, 6498--6542. \MR{4251284}

\bibitem{Cigumodp}
R.~Cluckers, \emph{{I}gusa's conjecture on exponential sums modulo $p$ and
  $p^2$ and the motivic oscillation index}, Int. Math. Res. Not. IMRN
  \textbf{2008} (2008), no.~4, article ID rnm118, 20 pages.

\bibitem{CDenSperlocal}
\bysame, \emph{Exponential sums: questions by {D}enef, {S}perber, and {I}gusa},
  Trans. Amer. Math. Soc. \textbf{362} (2010), no.~7, 3745--3756.

\bibitem{CKMu}
R.~Cluckers, J.~Koll\'ar, and M.~Musta{\c{t}}{\v{a}}, \emph{An invariant
  detecting rational singularities via the log canonical threshold},
  arXiv:1901.08111.

\bibitem{CLoes}
R.~Cluckers and F.~Loeser, \emph{Constructible motivic functions and motivic
  integration}, Inventiones Mathematicae \textbf{173} (2008), no.~1, 23--121.

\bibitem{CMN}
R.~Cluckers, M.~Musta\c{t}\u{a}, and K.~H. Nguyen, \emph{I{gusa}'{s}
  {conjecture} {for} {exponential} {sums}: {optimal} {estimates} {for}
  {nonrational} {singularities}}, Forum Math. Pi \textbf{7} (2019), e3.
  \MR{3987303}

\bibitem{CluckerNguyen}
R.~Cluckers and K.~H. Nguyen, \emph{Combining {I}gusa's conjectures on
  exponential sums and monodormy with semi-continuity of the minimal exponent},
  arXiv:2005.04197.

\bibitem{DeligneWI}
P.~Deligne, \emph{La conjecture de {W}eil. {I}. ({F}rench) [{W}eil's
  conjecture. {I}]}, Inst. Hautes \'Etudes Sci. Publ. Math. \textbf{43} (1974),
  273--307.

\bibitem{Denefdegree}
J.~Denef, \emph{On the degree of {I}gusa's local zeta function}, American
  Journal of Mathematics \textbf{109} (1987), 991--1008.

\bibitem{Denef91}
\bysame, \emph{Local zeta functions and euler characteristics}, Duke Math. J.
  \textbf{63} (1991), no.~3, 713--721.

\bibitem{DenefBour}
\bysame, \emph{Report on {I}gusa's local zeta function}, S\'eminaire Bourbaki
  \textbf{Vol. 1990/91, Exp. No.730-744} (1991), 359--386, Ast\'erisque
  201-203, http://wis.kuleuven.be/algebra/denef.html$\sharp$D2.

\bibitem{DenefVeys}
J.~Denef and W.~Veys, \emph{On the holomorphy conjecture for {I}gusa's local
  zeta function}, Proc. Amer. Math. Soc. \textbf{123} (1995), no.~10,
  2981--2988.

\bibitem{ELM}
L.~Ein, R.~Lazarsfeld, and M.~Musta\c{t}\v{a}, \emph{Contact loci in arc
  spaces}, Compos. Math. \textbf{140} (2004), no.~5, 1229--1244. \MR{2081163}

\bibitem{Hir:Res}
H.~Hironaka, \emph{Resolution of singularities of an algebraic variety over a
  field of characteristic zero. {I}}, Ann. of Math. (2) \textbf{79} (1964),
  no.~1, 109--203.

\bibitem{Igusa3}
J.~Igusa, \emph{Lectures on forms of higher degree (notes by {S}. {R}aghavan)},
  Lectures on mathematics and physics, Tata institute of fundamental research,
  vol.~59, Springer-Verlag, 1978.

\bibitem{Katz}
N.~Katz, \emph{Estimates for "singular" exponential sums}, Int. Math. Res. Not.
  IMRN (1999), no.~16, 875--899.

\bibitem{Katz-Bet}
\bysame, \emph{Sums of {B}etti numbers in arbitrary characteristic}, Finite
  Fields Appl. \textbf{7} (2001), no.~1, 29--44.

\bibitem{Marker}
D.~Marker, \emph{Model theory: an introduction}, Graduate Texts in Mathematics,
  vol. 217, Springer-Verlag, 2002.

\bibitem{MustJAMS}
M.~Musta{\c{t}}{\v{a}}, \emph{Singularities of pairs via jet schemes}, J. Amer.
  Math. Soc. \textbf{15} (2002), no.~3, 599--615 (electronic). \MR{1896234
  (2003b:14005)}

\bibitem{Mustata2}
\bysame, \emph{I{MPANGA} lecture notes on log canonical thresholds},
  Contributions to algebraic geometry, EMS Ser. Congr. Rep., Eur. Math. Soc.,
  Z\"urich, 2012, Notes by Tomasz Szemberg, pp.~407--442.

\bibitem{MustPopa}
M.~Musta{\c{t}}{\v{a}} and M.~Popa, \emph{Hodge ideals for $q$-divisors,
  $v$-filtration, and minimal exponent}, 2018, arXiv:1807.01935.

\bibitem{NguyenVeys}
K.~H. Nguyen and W.~Veys, \emph{On the motivic oscillation index and bound of
  exponential sums modulo $p^m$ via analytic isomorphisms}, to appear in
  Journal de Math\'ematiques Pures et Applique\'es.

\bibitem{Pas}
J.~Pas, \emph{Uniform $p$-adic cell decomposition and local zeta functions},
  Journal f\"ur die reine und angewandte Mathematik \textbf{399} (1989),
  137--172.

\end{thebibliography}

\end{document}